\patchcmd{\chapter}{\if@openright\cleardoublepage\else\clearpage\fi}{}{}{} 
\theoremstyle{break}
\newtheorem{theorem}{Theorem} [chapter]
\newtheorem{defi}[theorem]{Definition} 
\newtheorem{prop}[theorem]{Proposition}
\newtheorem{lemma}[theorem]{Lemma}
\numberwithin{equation}{chapter}
\newcommand*{\toccontents}{\@starttoc{toc}}
\def\@makechapterhead#1{%
  \vspace*{50\p@}%
  {\parindent \z@ \raggedright \normalfont
    \interlinepenalty\@M
  \center  \LARGE\bfseries  \thechapter. #1\par\nobreak 
    \vskip 20\p@                                       
  }}
\newcommand{\subalign}[1]{%
  \vcenter{%
    \Let@ \restore@math@cr \default@tag
    \baselineskip\fontdimen10 \scriptfont\tw@
    \advance\baselineskip\fontdimen12 \scriptfont\tw@
    \lineskip\thr@@\fontdimen8 \scriptfont\thr@@
    \lineskiplimit\lineskip
    \ialign{\hfil$\m@th\scriptstyle##$&$\m@th\scriptstyle{}##$\hfil\crcr
      #1\crcr
    }%
  }%
}
\DeclareMathOperator*{\esssup}{ess\,sup}
\DeclareMathOperator{\divs}{div}
\DeclareMathOperator{\curl}{curl}
\DeclareMathOperator{\Ima}{Im}
\DeclareMathOperator{\Id}{Id}
\DeclareMathOperator{\tr}{tr}
\DeclareMathOperator{\supp}{supp}
\newcommand{\axi}{\ensuremath{a_{(\xi)}} \xspace}
  \newcommand{\wpr}{\ensuremath{w_{q+1}^{(p)}} \xspace} 
  \newcommand{\wc}{\ensuremath{w_{q+1}^{(c)}} \xspace}
  \newcommand{\wt}{\ensuremath{w_{q+1}^{(t)}} \xspace}
 \newcommand{\phixi}{\ensuremath\phi_{(\xi)}} 
  \newcommand{\Phixi}{\ensuremath\Phi_{(\xi)}}
  \newcommand{\psixi}{\ensuremath\psi_{(\xi)}}
 \newcommand{\Wxi}{\ensuremath{W_{(\xi)}} \xspace}
 \newcommand{\Wcxi}{\ensuremath{W^{(c)}_{(\xi)}} \xspace}
 \newcommand{\Vxi}{\ensuremath{V_{(\xi)}} \xspace}
\begin{document}

\begin{center}
\boldmath
~\\
\LARGE{\textbf{On the $3$D Navier-Stokes Equations with a Linear Multiplicative Noise and Prescribed Energy}}
\unboldmath
\end{center}

~\\
\begin{center}
\large{Stefanie Elisabeth Berkemeier}\\
\end{center}
\begin{center}
\today
\end{center}
\,\\
\begin{adjustwidth}{25pt}{25pt}
\textbf{Abstract.} For a prescribed deterministic kinetic energy we use convex integration to construct analytically weak and probabilistically strong solutions to the $3$D incompressible Navier-Stokes equations driven by a linear multiplicative stochastic forcing. These solutions are defined up to an arbitrarily large stopping time and have deterministic initial values, which are part of the construction. Moreover, by a suitable choice of different kinetic energies which coincide on an interval close to time $0$, we obtain non-uniqueness.
\end{adjustwidth}
~\\ \, \\ \, \\
\textbf{Keywords.} stochastic Navier-Stokes equations, multiplicative noise, kinetic energy, analytically weak solutions, probabilistically strong solutions, non-uniqueness, convex integration.
~\\  \,
\tableofcontents

{\let\clearpage\relax

  \chapter{Introduction}\label{Chapter 1}}
 \section{Motivation and Previous Works} \label{Section 1.1}
Proving existence and smoothness of strong solutions to the incompressible Navier-Stokes equations is a longstanding open problem in the field of fluid dynamics. It is the subject of one of the Millennium Prize Problems and caused, especially in the recent years, worldwide much attention.\\
In 2009 De Lellis and Sz\'ekelyhidi developed the method of convex integration which permits them to construct infinitely many weak solutions to the incompressible Euler equations which dissipate the total kinetic energy and satisfy the global and local energy inequality \cite{DLS09}, \cite{DLS10}, \cite{DLS13}. Together with Isett, Buckmaster and Vicol, they used this scheme to prove Onsager's conjecture in 2016 and 2017, respectively \cite{Is16}, \cite{BDLSV17}. Their work was a groundbreaking success regarding the theory of weak solutions and was seminal for many further results. \\
Buckmaster and Vicol applied the technique of convex integration in 2019 to establish existence and non-uniqueness of weak solutions to the incompressible Navier-Stokes equations with finite kinetic energy \cite{BV19a}, \cite{BV19b}. One year later, in July 2020, a similar result for power law flows in the deterministic setting  follows by Burczak, Modena and Sz\'ekelyhidi \cite{BMS21}. \\
Meanwhile the methods of convex integration even found their way in the theory of stochastic partial differential equations as well. A possible advantage by adding suitable stochastic perturbations into partial differential equations consists in regularizing deterministically ill-posed problems. Considering in particular a linear multiplicative noise provides additionally a certain stabilization effect on the three dimensional Navier-Stokes equations see e.g. Röckner, Zhu and Zhu \cite{RZZ13}. However this phenomena does not help when it comes to the question of uniqueness of probabilistically strong solutions to the Navier-Stokes equations, as also shown in the present paper.\\ 
A stochastic counterpart to \cite{BV19a}, \cite{BV19b} was obtained by Hofmanová, Zhu and Zhu, who were able to show existence and non-uniqueness of analytically weak and probabilistically strong solutions to the incompressible Navier-Stokes equations also with a prescribed energy but additionally driven by a stochastic additive noise \cite{HZZ21a}. \\
 Chen, Dong, Hofmanová, Zhu and Zhu developed further stochastic versions of convex integration to prove dissipative martingale solutions to 3D stochastic Euler equations, global existence and non-uniqueness for 3D stochastic Navier-Stokes equations with space time white noise, non-unique ergodic solutions for 3D Navier-Stokes equations and Euler equations as well as sharp non-uniqueness of solutions to stochastic Navier-Stokes equations \cite{HZZ21b}, \cite{HZZ22a}, \cite{HZZ21c}, \cite{CDZ10}. Very recently first results regarding the $3$D Euler equations with transport noise, power-law equations with additive noise and surface quasi-geostrophic equations with irregular spatial perturbations could already be achieved \cite{HLP22}, \cite{LZ22}, \cite{HZZ22b}.\\
In their previous work \cite{HZZ19} Hofmanová, Zhu and Zhu were concerned with the incompressible Navier-Stokes equations perturbed by three different stochastic forces: one with a multiplicative, one with an additive and one with a non-linear noise. In all these cases they proved that the law of analytically weak and probabilistically strong solutions is not unique and that the solutions violate the corresponding energy inequality.\\ 
A series of further results regarding non-uniqueness in law for several stochastic partial differential equations, such as the transport-diffusion equation, the $3$D magnetohydrodynamics system or also the $3$D Navier-Stokes equations,  was attained by Yamazaki and Rehmeier and Schenke \cite{Ya21a}, \cite{Ya21b}, \cite{Ya22}, \cite{RS22}. All these equations are perturbed by different kinds of random noise but without any statement concerning a prescribed energy. \\
In the present paper we follow the ideas of \cite{HZZ19} and \cite{HZZ21a} to prove the existence and non-uniqueness of solutions to the $3$D incompressible Navier-Stokes equations driven by a multiplicative noise up to an arbitrarily large stopping time. Opposed to \cite{HZZ19} we are now able to deduce that the kinetic energy of the constructed solution equals to a prescribed energy profile. To this end we make use of the transformation and convex integration technique therein, but had to reformulate the main iteration. For a detailed introduction into the Fourier analysis of convex integration we refer to \cite{Be23}.
   
  \section{Main Result}\label{Section 1.2}
  \noindent
We consider the three-dimensional incompressible Navier-Stokes equations perturbed by a linear multiplicative forcing
\begin{align}
\begin{split}
du+\divs(u \otimes u) \,dt+ \nabla P \,dt&= \nu\Delta u \,dt+ u\,dB, \\
\divs u&=0, \label{1.1}
\end{split}
\end{align}
posed on $[0,T]\times\mathbb{T}^3\times \Omega $ for some $T>0$, where $\left(B_t\right)_{t \in [0,T]}$ is a real-valued Brownian motion on an given probability space $\left( \Omega, \mathcal{F}, \mathcal{P}\right)$ and $\mathbb{T}^3= [0,2\pi]^3 $ denotes the three-dimensional torus. Moreover, let $\left(\mathcal{F}_{t}\right)_{t \in [0,T]}$ be the natural filtration generated by $\left(B_t\right)_{t \in [0,T]}$.\\
For some pressure $P\colon [0,T]\times\mathbb{T}^3\times \Omega \to \mathbb{R}$ the system governs the time evolution of the velocity $u \colon [0,T]\times\mathbb{T}^3\times \Omega \to \mathbb{R}^3$ of an incompressible fluid with viscosity $\nu$. Throughout the paper the viscosity is, for the sake of simplicity, assumed to be $1$, which physically corresponds to water of $20^\circ$C (cf. \cite{AT74}, p.1238, Table 3) and moreover we will often deal with the $\mathbb{T}^3$-periodic extensions of $u$ and $P$ on $\mathbb{R}^3$, which can be identified with functions on the three dimensional flat torus $\mathbb{R}^3\setminus \left(2\pi \mathbb{Z}\right)^3$ (cf. \thref{Lemma A.1}).\\
In this paper we are concerned with finding solutions in the following sense:
\begin{defi} \thlabel{Definition 1.1}
An $\left(\mathcal{F}_t\right)_{t \in [0,T]}$-adapted solution $u$ to \eqref{1.1} is said to be probabilistically strong and analytically weak if 
\begin{itemize}

\item[i)]it belongs for some $\gamma \in (0,1)$  to $C\left([0,T];H^\gamma\left(\mathbb{T}^3\right)\right)$ a.s., 
\item[ii)] it satisfies 
\begin{align*} 
 &\int_0^t \int_{\mathbb{T}^3} u(s,x,\omega)\cdot \varphi(x) \,dx \,dB_s\\
 &\hspace{0.2cm}= \int_{\mathbb{T}^3}   \Big(u(t,x,\omega)-u(0,x,\omega)\Big)\cdot\varphi(x)\,dx-\int_0^t \int_{\mathbb{T}^3} (u \otimes u)(s,x, \omega): \nabla \varphi^T(x)\,dx \,ds\\
 &\hspace{0.7cm}+ \int_0^t \int_{\mathbb{T}^3}  u(s,x,\omega)\cdot\Delta \varphi(x)\,dx\,ds 
\end{align*} 
for every divergence free test function $\varphi \in C^\infty\left(\mathbb{T}^3;\mathbb{R}^3\right)$, any $t \in [0,T]$ and almost all $\omega \in \Omega$,
\item[iii)] it is weakly divergence free, i.e. it obeys
\begin{align*} 
 \int_{\mathbb{T}^3} \Big(u(t,x,\omega)\cdot \nabla \Big)\phi(x) \,dx = 0
\end{align*} 
for all $\phi \in C^\infty\left(\mathbb{T}^3;\mathbb{R}\right)$, almost all $\omega \in \Omega$ at any time $t \in [0,T]$.
\end{itemize}  

\end{defi}
\noindent Note that working with divergence free test functions in the definition above allows to eliminate the pressure term, which can be reconstructed after a weak solution has been found. We can now formulate our main result:

\begin{theorem}\thlabel{Theorem 1.2}
For any $L>0$ arbitrarily large and every energy $e\in C^1_b\left([0,L];[\underline{e},\infty)\right)$, satisfying
\begin{align*}
\|e\|_{C}\leq \bar{e} \text{ \ and \ } \Big\|\frac{d}{dt} e\Big\|_{C}\leq \widetilde{e}
\end{align*}
for some constants $4< \underline{e}\leq \bar{e}$ and $\widetilde{e}>0$ a probabilistically strong and analytically weak solution $u$ to \eqref{1.1}, depending explicitly on the given energy $e$, can be constructed up to a $\mathcal{P}$-a.s. strictly positive stopping time
\begin{align*}
\tau:= \inf \{t>0: |B_t|\geq L\} \wedge \inf\{t>0: \|B\|_{C_{[0,t]}^{0,\iota}}\geq L \} \wedge L
\end{align*}
with $\iota \in \Big(\frac{1}{3}, \frac{1}{2}\Big)$. This solution has deterministic initial value $u_0$ and belongs to $C\left([0,\tau];H^\gamma\left(\mathbb{T}^3\right)\right)$ a.s. for some $\gamma \in (0,1)$. It obeys 
\begin{align*}
\esssup_{\omega \in \Omega} \sup_{t\in [0,\tau]} \|u(t,\omega)\|_{H^\gamma}< \infty
\end{align*}
and its kinetic energy is given by $e$, i.e.
\begin{align*}
\|u(t)\|^2_{L^2}=e(t),
\end{align*}
as long as $t \in [0,\tau]$.\\
Moreover the following consistency result holds:\\
If two energies with the same bounds $\underline{e}, \bar{e}, \widetilde{e}$ coincide for some $t \in [0,L]$ everywhere on $[0,t]$, then so do the corresponding solutions on $[0,t \wedge \tau]$.
\end{theorem}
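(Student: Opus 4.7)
My plan is to combine the transformation technique of \cite{HZZ19} with the energy-matching iteration of \cite{HZZ21a}: first convert the SPDE \eqref{1.1} into a pathwise random PDE by removing the multiplicative noise, and then run a convex integration iteration on the transformed problem with amplitudes tuned so that the prescribed kinetic energy is attained in the limit. Concretely, setting $v := e^{-B_t} u$ and applying Itô's formula, $v$ formally solves
\begin{align*}
\partial_t v + e^{B_t}\divs(v \otimes v) + \nabla \tilde P = \Delta v - \tfrac{1}{2} v,\qquad \divs v = 0,
\end{align*}
with $\tilde P = e^{-B_t} P$. On the random interval $[0,\tau]$ the stopping time ensures that $e^{\pm B_t}$ and $\|B\|_{C^{0,\iota}}$ are deterministically bounded by constants depending only on $L$ and $\iota$, so the transformed system can be treated pathwise.

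I would then set up a Navier--Stokes--Reynolds hierarchy: at step $q$ construct a divergence-free smooth $v_q$ and a trace-free symmetric Reynolds stress $\mathring R_q$ solving
\begin{align*}
\partial_t v_q + e^{B_t}\divs(v_q \otimes v_q) + \nabla \tilde P_q - \Delta v_q + \tfrac{1}{2} v_q = \divs \mathring R_q,\qquad \divs v_q = 0,
\end{align*}
together with inductive bounds of the form $\|v_q\|_{C^1_{t,x}}\lesssim \lambda_q$ and $\|\mathring R_q\|_{L^1_x}\lesssim \delta_{q+1}$, with $\lambda_q \nearrow \infty$, $\delta_q \searrow 0$. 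The step $q \rightsquigarrow q+1$ adds a perturbation $w_{q+1}=w_{q+1}^{(p)}+w_{q+1}^{(c)}+w_{q+1}^{(t)}$ built from intermittent jets as in \cite{BV19a, HZZ21a}, whose principal amplitudes $a_{(\xi)}$ are chosen via the geometric lemma to cancel the bulk of $\mathring R_q$ and, simultaneously, to drive the energy towards
\begin{align*}
\|v_{q+1}(t)\|_{L^2}^2 = e^{-2B_t}e(t) - \eta_{q+1}(t),\qquad \eta_{q+1}\to 0.
\end{align*}
The temporal corrector $w_{q+1}^{(t)}$ absorbs the oscillation error; because $e^{B_t}$ is only $\iota$-Hölder in $t$, the bound on $\|B\|_{C^{0,\iota}}$ imposed by $\tau$ replaces the Lipschitz-in-time control used in the purely deterministic iteration.

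Passing to the limit $q\to\infty$ yields $v\in C([0,\tau];H^\gamma)$ with $\mathring R_q\to 0$, hence a pathwise weak solution of the transformed equation with $\|v(t)\|_{L^2}^2 = e^{-2B_t}e(t)$. Undoing the transformation gives $u:=e^{B_t}v$ with the correct profile $\|u(t)\|_{L^2}^2=e(t)$; since every iterate $v_q$ is constructed by a measurable procedure from $B|_{[0,\cdot]}$ starting from the deterministic datum $v_0=u_0$, the limit is $(\mathcal{F}_t)$-adapted, and the weak formulation in \thref{Definition 1.1}(ii) is recovered by applying Itô's formula to $u=e^{B_t}v$ tested against a divergence-free $\varphi$. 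The consistency assertion follows because the construction at stage $q$ uses $e$ and $B$ only on $[0,t]$, so two energies that agree on $[0,t_0]$ produce identical iterates, hence identical limits, on $[0,t_0\wedge\tau]$. The main technical obstacle I anticipate is engineering $a_{(\xi)}$ to deliver the pointwise-in-time energy identity and the Reynolds cancellation \emph{simultaneously}, in the presence of the merely Hölder coefficient $e^{B_t}$ in front of the nonlinearity; closing the resulting time-oscillation error $\partial_t(e^{B_t}a_{(\xi)}^2)$ against the inductive scheme is precisely what forces the reformulation of the main iteration of \cite{HZZ19} referenced in the introduction.
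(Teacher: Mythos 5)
Your proposal follows essentially the same route as the paper: transform to a random PDE via $v = e^{-B}u$, run a Reynolds-stress convex-integration iteration with energy-matched amplitudes on $(-\infty,\tau]$, use the $\iota$-Hölder bound on $B$ enforced by $\tau$ wherever Lipschitz time-regularity would normally enter, pass to the $C_tH^\gamma$ limit, and undo the transformation. One small correction to your anticipated obstacle: the paper does not face an oscillation error of the form $\partial_t(e^{B_t}a_{(\xi)}^2)$, because it places a factor $\Theta_\ell^{-1/2}$ in front of the principle part $w_{q+1}^{(p)}$, which makes $\Theta_\ell\,w_{q+1}^{(p)}\otimes w_{q+1}^{(p)}$ lose its $\Theta$ coefficient entirely (see \eqref{3.31}); the temporal corrector then only needs to absorb $\partial_t(a_{(\xi)}^2\psi_{(\xi)}^2\phi_{(\xi)}^2)$, and what actually forces the reformulation relative to \cite{HZZ19} is the insertion of $\Theta^{-2}$ into the energy pumping term $\eta_q$ so that the inductive bound \eqref{3.5} closes and the kinetic-energy identity can be extracted in the limit.
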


 \noindent The proof of \thref{Theorem 1.2} is based on a convex integration scheme. That is, after transforming \eqref{1.1} to a random PDE, we develop an iteration procedure and apply it to the just received equation.\\
More precisely, if $u$ solves \eqref{1.1}, the function $v:=e^{-B}u$ is by Itô's formula a solution to the ensuing system
\begin{align}
\label{1.2}
\begin{split}
\partial_tv+\frac{1}{2} v- \Delta v+ \Theta\divs(v \otimes v)+\Theta^{-1}\nabla P&=0,\\
\divs v&=0,
\end{split}
\end{align}
where $\Theta$ is the stochastic process given by 
\begin{align*}
\Theta \colon [0,T]\times \Omega \to \mathbb{R}, \qquad\Theta(t,\omega):=e^{B_t(\omega)}
\end{align*}
and the converse is also true. In fact, applying Itô's formula to the smooth function \linebreak $f\colon [0,T]\times \mathbb{R} \to \mathbb{R}$, $f(t,y):= e^{-y}$ yields $de^{-B_t}=\frac{1}{2}e^{-B_t}\, dt- e^{-B_t}\, dB_t$ and by Itô's product rule and \eqref{1.1} we conclude \eqref{1.2}.

\section{Organization of the Paper}
We organize the present paper as follows: Chapter \ref{Chapter 2} is devoted to the collection of basic notations used throughout this paper. In Chapter \ref{Chapter 3} we outline the convex integration technique to prove \thref{Proposition 4.1}. This is the core of the proof of our main \thref{Theorem 1.2}, which is established in Chapter \ref{Chapter 4}. The Appendix \ref{Appendix} covers some lemmata used in the previous chapters.

\chapter{Preliminaries}\label{Chapter 2}
In order to define several function spaces and operators we need the Fourier transform and the inverse Fourier transform of a function $u$ on $\mathbb{T}^3$ given by
\begin{align*}
(\mathcal{F}u)(n)&:=\hat{u}_n:=(2\pi)^{-3} \int_{\mathbb{T}^3} u(y)e^{-in\cdot y}\, dy \qquad
\text{and} \qquad
 (\mathcal{F}^{-1}\hat{u}_n)(x)=u(x)=\sum_{n \in \mathbb{Z}^3}\hat{u}_n e^{in\cdot x}
\end{align*}
for any $n \in \mathbb{Z}^3$ and $x\in \mathbb{T}^3$, respectively, where the series shall be understood as the limit of partial sums with square-cut off
\begin{align*}
u_N(x):=\sum_{n \in [-N,N]^3}\hat{u}_n e^{in\cdot x}.
\end{align*}
Moreover, for $d \in \mathbb{N}$ we will often deal with the spaces of symmetric or traceless $d \times d$-matrices $A$, designated by $\mathbb{R}_{\text{sym}}^{d\times d}$ and $\mathring{\mathbb{R}}^{d\times d}$, respectively. As usual we consider the Frobenius norm on them and in order to elucidate that the matrix itself is traceless we will frequently write $\mathring{A}$ instead of $A$.

\section{Function Spaces}\label{Section 2.1}
For some $N\in \mathbb{N}_0\cup \{\infty\},\, T\in [0,\infty)$ and a Banach space $\left(Y,\|\cdot\|_Y\right)$ we denote the space of all $N$-times continuously differentiable functions from $X\in \{ \mathbb{T}^3,\mathbb{R}^3\}$ to $Y$ by $C_{XY}^N$ and from $X=(-\infty,T]$ to $Y$ we shorten $C_{TY}^N$. Endowed with the natural norms
\begin{align*}
\|u\|_{C_{XY}^N}:= \sum_{\substack{0 \leq |\alpha| \leq N\\  \alpha \in \mathbb{N}_0^3}}\sup_{x\in X} \|D^\alpha u(x) \|_Y  \qquad \text{ and } \qquad \|u\|_{C_{TY}^N}:= \sum_{\substack{0 \leq n \leq N\\  n \in \mathbb{N}_0}}\sup_{t \in(-\infty,T]}\| \partial_t^n u(t) \|_Y ,
\end{align*}
respectively, it is well known that they become Banach spaces.\\
Sometimes we will omit writing $X$ or $Y$ if it is clear from the context which domain or codomain is considered and moreover we will frequently write $C_{XY}$ instead of $C^0_{XY}$. In particular $\|u\|_C$ is the usual supremum norm, which will also be used for more general normed spaces $X$.\\
There should be no misunderstanding when we talk about $C_c^N$- and $C_b^N$-functions, meaning the $N$-times continuous differentiable ones with compact support and the ones that are either bounded from above, bounded from below or even both. \\
Besides it is customary to introduce the space of test functions 
\begin{align*}
\mathcal{D}\left(X\right):=\left\{
\begin{array}{ll}
C^\infty\left(X \right),& X \text{ is compact},\\
C_c^\infty\left(X\right),& \text{otherwise},
\end{array}\right.
\end{align*}
and we moreover label by $C_{T,x}^N$ the space of all $N$-times continuously differentiable functions on $(-\infty,T]\times \mathbb{T}^3$, equipped with the corresponding norm
\begin{align*}
\|u\|_{C_{T,x}^N}:= \sum_{\substack{0 \leq n+|\alpha| \leq N\\ n \in \mathbb{N}_0, \alpha \in \mathbb{N}_0^3}}\|\partial_t^n D^\alpha u\|_{C_TL^\infty}.
\end{align*} 
We will speak of Hölder-continuous functions $u$ on $X\in \{\mathbb{T}^3,\mathbb{R}^3\}$ of exponent $N+\iota$ with $N\in \mathbb{N}_0$ and $\iota \in (0,1]$, whenever
\begin{align*}
\|u\|_{C_{XY}^{N,\iota}}:=\sum_{\substack{0 \leq |\alpha| \leq N\\  \alpha \in \mathbb{N}_0^3}}\| D^\alpha u\|_{C_{XY}}+\sum_{\substack{ |\alpha| = N\\  \alpha \in \mathbb{N}_0^3}}\sup_{x \in X}[ D^\alpha u(x)]_{C_{XY}^{0,\iota}}<\infty
\end{align*}
holds, where
\begin{align*}
[u(x)]_{C_{XY}^{0,\iota}}:= \sup_{\substack{x_1,x_2 \in X\\ x_1 \neq x_2}} \frac{\|u(x_1)-u(x_2)\|_Y}{|x_1-x_2|^\iota},
\end{align*}
is the $\iota^\text{th}$-Hölder seminorm. For functions on $(-\infty,T]$ we define the space in the same manner. \\
The space of Bochner-integrable functions $L^p\left(X;Y\right)$ consists of the equivalence classes of all functions $u\colon X \to Y$, which coincide almost everywhere and for which the usual $L^p$-norm $\|\cdot\|_{L^p}$ is finite, whereas
$\left(W^{k,p}\big( X;\mathbb{R}^d\big),\|\cdot\|_{W^{k,p}}\right)$ should be the usual Sobolev space for all $k \in \mathbb{N}_0$ and $1\leq p\leq \infty$. Moreover we denote by $W^{k,p}_0\left(X\right)$ the closure of $\mathcal{D}\left(X\right)$ in $\left(W^{k,p}\big(X\big),\|\cdot\|_{W^{k,p}}\right)$ and by $W^{-k,q}\left(X\right)$ the dual space of $W^{k,p}_0\left(X\right)$ with $\frac{1}{p}+ \frac{1}{q}=1$.\\
The Bessel potential spaces are for any $p,q\in(1,\infty)$, $\frac{1}{p}+\frac{1}{q}=1$ on $\mathbb{T}^3$ defined in the spirit of \cite{Tr83}, \cite{Tr92}:\\
We set
\begin{align*}
W^{s,p}\left(\mathbb{T}^3\right)&:=\bigg\{u \in L^p\left(\mathbb{T}^3\right): \Big\|\mathcal{F}^{-1}\big[(1+|\cdot|^2)^{s/2}\mathcal{F}u\big] \Big\|_{L^p}<\infty\bigg\}
\intertext{for $s\geq 0$, whereas we define} 
W^{-s,p}\left(\mathbb{T}^3\right)&:=\bigg\{u ^\prime\in \left(\mathcal{D}\left(\mathbb{T}^3\right)\right)^\prime: \Big\|\mathcal{F}^{-1}\big[(1+|\cdot|^2)^{-s/2}\mathcal{F}u^\prime\big]  \Big\|_{(L^q)^\prime}<\infty\bigg\} 
\end{align*}
whenever $s>0$, which can be identified with the dual space of $W^{s,q}\left(\mathbb{T}^3\right)$ (see also \cite{DNPV11}).\\
Endowed with the canonical norms
\begin{align*}
\|u\|_{W^{s,p}}:= \Big\|\mathcal{F}^{-1}\big[(1+|\cdot|^2)^{s/2}\mathcal{F}u\big] \Big\|_{L^p}
 \quad \text{and} \quad
\|u^\prime\|_{W^{-s,p}}:= \Big\|\mathcal{F}^{-1}\big[(1+|\cdot|^2)^{-s/2}\mathcal{F}u^\prime\big] \Big\|_{(L^q)^\prime},
\end{align*}
respectively, these spaces become Banach spaces and furthermore we stipulate $\displaystyle H^s\left(X\right) :=\linebreak W^{s,2}\left(X\right)$.\\
For $p\in[1,\infty], \, d\in \mathbb{N}$ and $X\in \{\mathbb{N}^d, \mathbb{Z}^d\}$ we denote by $\ell^p(X)$ the usual space of sequences for which the corresponding norm $\|\cdot\|_{\ell^p}$ is finite.\\
We will often deal with functions of zero mean. So for convenience we set
\begin{align*}
X_{\neq 0}:=\mathbb{P}_{\neq 0} \left( X\right),
\end{align*}
for any function space $X$, where $\mathbb{P}_{\neq 0}$ is the projection onto functions with non-zero frequencies, which will be introduced in \thref{Definition 2.2} below.

\section{Operators} \label{Section 2.2}

We will deal with the extended Leray projection $\mathbb{P}=\Id-\nabla\Delta^{-1}\divs$ on the Bessel potential spaces $W_{\neq 0}^{s,p}\left(\mathbb{T}^3\right)$ for every real $s\geq 0$ and $p \in (1,\infty)$, which enjoys the following useful property. 
\begin{lemma}\thlabel{Lemma 2.1}
The Leray projection commutes almost everywhere with any partial derivative on $W_{\neq 0}^{1,p}\left(\mathbb{T}^3\right)$ and if we work with time depended functions $u$, satisfying 
\begin{align*}
|\partial_t u(t,x)|\leq g(x)
\end{align*}
for some $g\in L^1\left( \mathbb{T}^3\right)$ and all $x \in \mathbb{T}^3$, even on $C^1\left((-\infty,T];C_{\neq 0}\left(\mathbb{T}^3\right)\right)$ with $T>0$. In other words 
\begin{align*}
\partial_{x_j}(\mathbb{P}u)(t,x)=(\mathbb{P}\partial_{x_j} u)(t,x) \qquad \text{ and } \qquad \partial_t(\mathbb{P}u)(t,x)=(\mathbb{P}\partial_t u)(t,x)
\end{align*}
holds for all $x\in \mathbb{T}^3$, each $t\in \mathbb{R} $ and $j=1,2,3$.
\end{lemma}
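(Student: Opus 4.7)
My plan is to exploit the Fourier-multiplier representation of $\mathbb{P}$ on the zero-mean subspace of $\mathbb{T}^3$: for $n\neq 0$,
\[
\widehat{(\mathbb{P}u)}_n = \Big(I - \frac{n\otimes n}{|n|^2}\Big)\hat u_n,
\]
whose symbol commutes pointwise in $n$ with every scalar Fourier multiplier. This reduces both commutation statements to a verification at each frequency $n\neq 0$, followed by a limiting argument to recover the operator identity on the full function spaces.

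For the spatial assertion, I would first check the identity on trigonometric polynomials, where $(\partial_{x_j}u)^\wedge_n = i n_j\hat u_n$ and the commutation $(i n_j)(I - n\otimes n/|n|^2) = (I - n\otimes n/|n|^2)(i n_j)$ is immediate. Density of trigonometric polynomials in $W^{1,p}_{\neq 0}(\mathbb{T}^3)$, together with the $L^p$-boundedness of $\nabla\Delta^{-1}\divs$ for $1<p<\infty$ (equivalently, of $\mathbb{P}$ on the Bessel potential scale used in the paper), then extends the identity to all of $W^{1,p}_{\neq 0}(\mathbb{T}^3)$, where equality is understood almost everywhere.

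For the time assertion, the hypothesis $|\partial_t u(t,x)|\leq g(x)$ with $g\in L^1(\mathbb{T}^3)$ supplies exactly the dominating function needed to differentiate $\hat u_n(t) = (2\pi)^{-3}\int_{\mathbb{T}^3} u(t,y) e^{-in\cdot y}\,dy$ under the integral sign, yielding $\partial_t \hat u_n(t) = \widehat{\partial_t u}_n(t)$ with Lipschitz constant $(2\pi)^{-3}\|g\|_{L^1}$ uniform in $n$ and $t$. The boundedness of the symbol then allows one to differentiate each square-cut-off partial sum of $(\mathbb{P}u)(t,x)$ in $t$, and I expect the principal obstacle to be showing that these partial-sum derivatives converge to the already-well-defined $(\mathbb{P}\partial_t u)(t,x)$ as $N\to\infty$, since the Fourier series of a merely continuous $u(t,\cdot)$ need not converge absolutely. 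I would handle this by a duality argument: for every smooth divergence-free test function $\varphi$, the $L^2$-self-adjointness of $\mathbb{P}$ gives $\langle \mathbb{P}u(t,\cdot),\varphi\rangle = \langle u(t,\cdot),\mathbb{P}\varphi\rangle$, differentiation under the integral is justified by $g$, and the resulting distributional identity upgrades to a pointwise one by continuity of both sides in $(t,x)$.
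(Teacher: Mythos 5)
The paper states this lemma without proof, so I can only evaluate your argument on its own terms. Your treatment of the spatial commutation is sound: the Fourier symbol $\mathrm{Id}-n\otimes n/|n|^2$ of $\mathbb{P}$ commutes with the scalar symbol $in_j$ on trigonometric polynomials, and density together with the $L^p$-boundedness of the Riesz transforms for $1<p<\infty$ extends this to all of $W^{1,p}_{\neq 0}(\mathbb{T}^3)$, where the equality is indeed to be read almost everywhere.

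For the time commutation the structure is right but your closing step does not hold in the form you state it. You propose to upgrade the distributional identity $\tfrac{d}{dt}\langle\mathbb{P}u(t),\varphi\rangle=\langle\mathbb{P}\partial_t u(t),\varphi\rangle$ to a pointwise one by "continuity of both sides in $(t,x)$," but $\mathbb{P}u(t,\cdot)$ and $\mathbb{P}\partial_t u(t,\cdot)$ are only $L^p$ functions for $p<\infty$: the Leray projection, built from Riesz transforms, does not map $C(\mathbb{T}^3)$ into $C(\mathbb{T}^3)$, so there is no spatial continuity to invoke. The conclusion has to be read in the a.e. sense announced at the start of the lemma, and the bridge you actually need is the Bochner-valued fundamental theorem of calculus: continuity of $t\mapsto\mathbb{P}\partial_t u(t)$ in $L^p$ together with your distributional identity gives $\mathbb{P}u(t)-\mathbb{P}u(s)=\int_s^t\mathbb{P}\partial_r u(r)\,dr$ in $L^p$, hence $L^p$-valued differentiability, hence the a.e. pointwise statement after a Fubini argument. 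Two smaller points. First, testing only against divergence-free $\varphi$ does not separate general $L^p$ functions; it works here only because $\partial_t(\mathbb{P}u)-\mathbb{P}\partial_t u$ lies in the range of $\mathbb{P}$ and so is $L^2$-orthogonal to gradients, and that should be made explicit. Second, the whole Fourier partial-sum and duality apparatus is avoidable: reading $C^1((-\infty,T];C_{\neq 0})$ in the Banach-space sense, the difference quotient $h^{-1}(u(t+h)-u(t))$ converges to $\partial_t u(t)$ in $C(\mathbb{T}^3)$-norm, hence in $L^p$, and since $\mathbb{P}$ is a bounded linear operator on $L^p$ it passes directly through the limit — at which point the $L^1$ domination by $g$ is not even needed for this part of the claim.
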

 \noindent Moreover, it is very common and practical to introduce the Fourier multiplier operators, which project a function onto its null mean frequencies and onto its frequencies $\leq \kappa$ in absolute value.

\begin{defi}\thlabel{Definition 2.2}
For any $u \in L^p\left(\mathbb{T}^3\right)$ with $1<p\leq \infty$ we denote by
\begin{align*}
\left(\mathbb{P}_{\neq 0}u\right)(x):=\mathcal{F}^{-1}\Big[ \mathbbm{1}_{\{|\cdot|\neq 0 \}}\mathcal{F}u\Big](x)=u(x)-(\mathcal{F}u)(0)=u(x)-\fint_{ \mathbb{T}^3}u(x)\,dx
\end{align*}
the projection onto its non-zero frequencies. \\
Furthermore for all real $\kappa \geq 1$ we define the operators $\mathbb{P}_{\leq \kappa}$ and $\mathbb{P}_{\geq \kappa}$ as
\begin{align*}
\left(\mathbb{P}_{\leq \kappa}u\right)(x)&:= \mathcal{F}^{-1}\Big[ \chi_\kappa\, \mathcal{F}u\Big](x)=\sum_{n \in \mathbb{Z}^3} \chi_\kappa(n)\hat{u}_n e^{in\cdot x}
\intertext{and}
\left(\mathbb{P}_{\geq \kappa}u\right)(x)&:= \mathcal{F}^{-1}\Big[ \big(1-\chi_\kappa\big)\, \mathcal{F}u\Big](x)=\sum_{n \in \mathbb{Z}^3} \big(1-\chi_\kappa(n)\big)\hat{u}_ne^{in\cdot x},
\end{align*}
respectively, where we set $\chi_\kappa:= \chi \left( \frac{\cdot}{\kappa}\right)$ for the smooth compactly supported function $\chi \in C_c^\infty \left(\mathbb{R}^3\right)$ given by
\begin{align*}
\chi(x):=\left\{\begin{array}{ll}
1,& |x|\leq \frac{1}{2}, \\ [1.3ex] 
\frac{1}{\exp\left(\frac{1}{1/2-|x|}+\frac{1}{1-|x|} \right)+1},& \frac{1}{2}<|x|<1,\\
0,& |x|\geq 1.\\  
\end{array}\right. 
\end{align*}
\end{defi}

\begin{lemma} \thlabel{Lemma 2.3}
The above operators $\mathbb{P}_{\neq 0},\, \mathbb{P}_{\leq \kappa}$ and $\mathbb{P}_{\geq \kappa}$ are for each real $\kappa \geq 1$ and $1<p\leq \infty$ continuous on $L^p\left(\mathbb{T}^3\right)$, where the implicit constants do not depend on $\kappa$.
\end{lemma}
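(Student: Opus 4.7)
My plan is to handle the three operators of \thref{Definition 2.2} separately, with the bound on $\mathbb{P}_{\neq 0}$ being essentially immediate, the bound on $\mathbb{P}_{\leq\kappa}$ being the substantive step, and the bound on $\mathbb{P}_{\geq\kappa}$ following as an easy consequence. Since all three operators are linear, continuity is equivalent to boundedness, so the task reduces entirely to obtaining norm estimates.

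For $\mathbb{P}_{\neq 0}$ I would use the explicit identity $\mathbb{P}_{\neq 0}u = u - \fint_{\mathbb{T}^3} u\,dx$ from \thref{Definition 2.2}. Hölder's inequality on the bounded set $\mathbb{T}^3$ gives $|\fint u|\leq |\mathbb{T}^3|^{-1/p}\|u\|_{L^p}$, so that, viewed as a constant function on $\mathbb{T}^3$, the mean value has $L^p$-norm at most $\|u\|_{L^p}$. The triangle inequality then yields $\|\mathbb{P}_{\neq 0}u\|_{L^p}\leq 2\|u\|_{L^p}$ for every $1<p\leq\infty$, with no $\kappa$-dependence.

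The heart of the proof is the estimate for $\mathbb{P}_{\leq\kappa}$, which I would realise as convolution on the torus with the kernel
\begin{align*}
K_\kappa(x):=\sum_{n\in\mathbb{Z}^3}\chi(n/\kappa)\,e^{in\cdot x},
\end{align*}
so that, up to the fixed normalisation constant coming from our Fourier convention, $\mathbb{P}_{\leq\kappa}u=K_\kappa\ast u$. The crux is a uniform bound $\|K_\kappa\|_{L^1(\mathbb{T}^3)}\leq C$ independent of $\kappa\geq 1$. Since $\chi\in C_c^\infty(\mathbb{R}^3)$, its inverse Fourier transform $\psi:=\mathcal{F}^{-1}_{\mathbb{R}^3}\chi$ is a Schwartz function, hence integrable on $\mathbb{R}^3$. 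Applying Poisson summation to the rescaled Schwartz function $\psi_\kappa(y):=\kappa^3\psi(\kappa y)$, whose Fourier transform on $\mathbb{R}^3$ equals $\chi(\cdot/\kappa)$, represents $K_\kappa$ as the periodisation
\begin{align*}
K_\kappa(x)=(2\pi)^3\sum_{m\in\mathbb{Z}^3}\psi_\kappa(x+2\pi m),
\end{align*}
which yields, after a change of variables,
\begin{align*}
\|K_\kappa\|_{L^1(\mathbb{T}^3)}\leq(2\pi)^3\|\psi_\kappa\|_{L^1(\mathbb{R}^3)}=(2\pi)^3\|\psi\|_{L^1(\mathbb{R}^3)},
\end{align*}
where the scale-invariance $\int|\psi_\kappa|=\int|\psi|$ is what makes the bound $\kappa$-free. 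Young's convolution inequality on $\mathbb{T}^3$ then gives $\|\mathbb{P}_{\leq\kappa}u\|_{L^p}\leq C\|u\|_{L^p}$ for all $1\leq p\leq\infty$ with $C$ independent of $\kappa$; in the endpoint case $p=\infty$ no convergence questions arise, since $\chi(n/\kappa)$ vanishes for $|n|\geq\kappa$ and the series defining $\mathbb{P}_{\leq\kappa}u$ is in fact a finite sum.

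The bound on $\mathbb{P}_{\geq\kappa}$ is then immediate: from $\chi_\kappa+(1-\chi_\kappa)\equiv 1$ one obtains the operator identity $\mathbb{P}_{\geq\kappa}=\Id-\mathbb{P}_{\leq\kappa}$ on $L^p(\mathbb{T}^3)$, whence the triangle inequality together with the previous step gives a $\kappa$-independent $L^p$ estimate. The main obstacle in the argument is thus the careful execution of Poisson summation together with the scaling that yields the $\kappa$-free bound on $\|K_\kappa\|_{L^1}$; once that is in place, everything else reduces to routine convolution estimates.
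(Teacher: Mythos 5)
Your proof is correct and follows the standard route: $\mathbb{P}_{\neq 0}$ is handled by Hölder and the triangle inequality, while $\mathbb{P}_{\leq\kappa}$ is realised as torus convolution with the kernel $K_\kappa$, whose uniform $L^1$-bound is extracted via Poisson summation and the scale-invariance $\|\kappa^3\psi(\kappa\cdot)\|_{L^1(\mathbb{R}^3)}=\|\psi\|_{L^1(\mathbb{R}^3)}$, after which Young's inequality and $\mathbb{P}_{\geq\kappa}=\Id-\mathbb{P}_{\leq\kappa}$ finish the job. The paper states this lemma without proof (deferring to the referenced Fourier-analytic background), and your argument is precisely the canonical one; note that it in fact gives boundedness on all of $L^p$ for $1\leq p\leq\infty$, slightly more than the range $1<p\leq\infty$ quoted in the statement.
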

\begin{lemma} \thlabel{Lemma 2.4}
For all real $\kappa\geq 1$ it holds
\begin{align*}
\|\left(-\Delta\right)^{-1/2} \mathbb{P}_{\geq \kappa}\|_{L^p\to L^p} \lesssim \frac{1}{\kappa}
\end{align*}
whenever $1<p<\infty$.
\end{lemma}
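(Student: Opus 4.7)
My plan is to recognise $(-\Delta)^{-1/2}\mathbb{P}_{\geq\kappa}$ as a Fourier multiplier on $\mathbb{T}^3$, reduce by scaling to a fixed Mikhlin-type symbol on $\mathbb{R}^3$, and conclude via the Mikhlin--H\"ormander multiplier theorem combined with de Leeuw's transference from $\mathbb{R}^3$ to the torus.

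By \thref{Definition 2.2}, the operator $(-\Delta)^{-1/2}\mathbb{P}_{\geq\kappa}$ has the multiplier symbol
\[
  m_\kappa(n)=\frac{1-\chi(n/\kappa)}{|n|}\quad(n\neq 0),\qquad m_\kappa(0)=0,
\]
and since $\chi\equiv 1$ on $\{|\xi|\leq 1/2\}$, the factor $1-\chi(\cdot/\kappa)$ vanishes on $\{|\xi|\leq \kappa/2\}$, so division by $|n|$ causes no trouble and, incidentally, $\mathbb{P}_{\geq \kappa}$ annihilates the zero mode so that $(-\Delta)^{-1/2}$ is unambiguously defined. Setting $M(\xi):=(1-\chi(\xi))/|\xi|$ with $M(0):=0$, the smoothness of $\chi$ and its behaviour near $0$ imply $M\in C^\infty(\mathbb{R}^3)$, and a direct differentiation yields the H\"ormander-type decay
\[
  |D^\alpha M(\xi)|\leq C_\alpha(1+|\xi|)^{-1-|\alpha|}, \qquad \alpha\in\mathbb{N}_0^3.
\]

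The key scaling observation is $m_\kappa(\xi)=\kappa^{-1}M(\xi/\kappa)$, from which
\[
  \bigl||\xi|^{|\alpha|}D^\alpha m_\kappa(\xi)\bigr|=\kappa^{-1}\bigl||\xi/\kappa|^{|\alpha|}(D^\alpha M)(\xi/\kappa)\bigr|\leq \kappa^{-1}C_\alpha,
\]
uniformly in $\xi\in\mathbb{R}^3$ and $\kappa\geq 1$. The Mikhlin--H\"ormander multiplier theorem therefore delivers an $L^p(\mathbb{R}^3)\to L^p(\mathbb{R}^3)$ bound of size $\kappa^{-1}$ for the Fourier multiplier with symbol $m_\kappa$, for every $1<p<\infty$. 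Since $m_\kappa$ extends continuously to $\mathbb{R}^3$, de Leeuw's transference principle then passes this bound to the corresponding multiplier acting on $L^p(\mathbb{T}^3)$ with the same operator norm, yielding
\[
  \|(-\Delta)^{-1/2}\mathbb{P}_{\geq\kappa}\|_{L^p\to L^p}\lesssim \kappa^{-1}.
\]

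The only mildly delicate step is the uniform verification of the Mikhlin--H\"ormander bound with the correct $\kappa^{-1}$ factor pulled outside; both the multiplier theorem itself and the de Leeuw transference are completely off-the-shelf once this bookkeeping has been done. As an alternative route, one can decompose $1-\chi_\kappa$ into Littlewood--Paley pieces $\psi_j(\cdot/\kappa)$ supported on $|\xi|\sim 2^j\kappa$ for $j\geq 0$, estimate each piece in $L^p$ by $\lesssim(2^j\kappa)^{-1}$ via Bernstein-type inequalities, and sum the resulting geometric series to recover the same bound.
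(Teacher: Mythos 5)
The paper states \thref{Lemma 2.4} without giving a proof, treating it as a standard fact about Fourier multipliers, so there is no internal argument to compare against. Your proof is correct and is essentially the canonical way to establish such a bound: the rescaling identity $m_\kappa(\xi)=\kappa^{-1}M(\xi/\kappa)$ is the right observation, the verification that $M\in C^\infty(\mathbb{R}^3)$ (because $1-\chi$ vanishes near the origin, killing the singularity of $|\xi|^{-1}$) with Mikhlin--H\"ormander decay $|D^\alpha M(\xi)|\lesssim(1+|\xi|)^{-1-|\alpha|}$ is correct, and the scale invariance of the Mikhlin condition cleanly pulls out the prefactor $\kappa^{-1}$ uniformly in $\kappa\geq 1$. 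Applying de Leeuw's transference to pass from $\mathbb{R}^3$ to $\mathbb{T}^3$ is legitimate here since $m_\kappa$ is continuous (indeed smooth) on all of $\mathbb{R}^3$, including at the origin. The alternative Littlewood--Paley route you sketch at the end would also work and avoids de Leeuw, though it is longer. One small bookkeeping remark: your displayed Mikhlin bound only needs to be checked for $|\alpha|\leq\lfloor 3/2\rfloor+1=2$, which is worth stating if you write this up formally, but it changes nothing in the argument.
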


\begin{lemma} \thlabel{Lemma 2.5}
For all $\left(\frac{\mathbb{T}}{L}\right)^3$-periodic functions $u \in L^p \left( \mathbb{T}^3\right)$ with $L \in \mathbb{N}$ and $1< p\leq \infty$ the operators $\mathbb{P}_{\leq \kappa}$ and $\mathbb{P}_{\geq \kappa}$ can be written as 
\begin{align*}
\left(\mathbb{P}_{\leq \kappa}u\right)(x)&= \sum_{n \in (L\mathbb{Z})^3} \chi_\kappa(n)\hat{u}_{n}e^{in\cdot x}
\intertext{and}
\left(\mathbb{P}_{\geq \kappa}u\right)(x)&= \sum_{n \in (L\mathbb{Z})^3} \big(1-\chi_\kappa(n)\big)\hat{u}_{n}e^{in\cdot x},
\end{align*}
respectively. If additionally $L>\kappa$, we have that
\begin{align*}
\mathbb{P}_{\geq \kappa}u=\mathbb{P}_{\neq 0}u.
\end{align*}
\end{lemma}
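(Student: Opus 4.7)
The plan is to exploit the fact that a $\left(\mathbb{T}/L\right)^3$-periodic function, viewed as a $\mathbb{T}^3$-periodic function, has Fourier coefficients supported on the sublattice $(L\mathbb{Z})^3$. Once this is established, both formulas follow immediately from \thref{Definition 2.2}, and the final identity reduces to checking the support properties of the cut-off $\chi_\kappa$.

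First, I would show that if $u\in L^p\left(\mathbb{T}^3\right)$ satisfies $u(x+\frac{2\pi}{L}e_j)=u(x)$ for a.e. $x$ and every $j=1,2,3$, then $\hat{u}_n=0$ for all $n\in \mathbb{Z}^3 \setminus (L\mathbb{Z})^3$. This is a direct substitution argument: in the definition of $\hat u_n$ from Chapter~\ref{Chapter 2}, the change of variable $y \mapsto y + \frac{2\pi}{L}e_j$ (which preserves the integral by periodicity of the integrand on $\mathbb{T}^3$) gives
\[
\hat{u}_n = (2\pi)^{-3}\int_{\mathbb{T}^3} u(y)\,e^{-in\cdot y}\,dy = e^{2\pi i n_j/L}\,\hat{u}_n,
\]
so whenever $n_j \notin L\mathbb{Z}$ for some $j$, the factor $e^{2\pi i n_j/L}\neq 1$ forces $\hat{u}_n=0$. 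Plugging this into the series representations of $\mathbb{P}_{\leq\kappa}u$ and $\mathbb{P}_{\geq\kappa}u$ from \thref{Definition 2.2} kills every term indexed by $n\notin(L\mathbb{Z})^3$, yielding the two displayed formulas. The convergence in $L^p$ is not an issue because $\mathbb{P}_{\leq\kappa}$ and $\mathbb{P}_{\geq\kappa}$ were already shown to act continuously on $L^p\left(\mathbb{T}^3\right)$ in \thref{Lemma 2.3}, so both sides agree as elements of that space.

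For the final identity under the assumption $L>\kappa$, I would use the explicit support of the cut-off. By construction $\chi(y)=0$ for $|y|\geq 1$, so $\chi_\kappa(n)=\chi(n/\kappa)=0$ as soon as $|n|\geq \kappa$. For any $n\in (L\mathbb{Z})^3$ with $n\neq 0$ we have $|n|\geq L >\kappa$, hence $\chi_\kappa(n)=0$ and thus $1-\chi_\kappa(n)=1$. Combined with $\chi_\kappa(0)=\chi(0)=1$, the just-proved expansion for $\mathbb{P}_{\geq\kappa}u$ collapses to
\[
\left(\mathbb{P}_{\geq\kappa}u\right)(x) = \sum_{\substack{n\in (L\mathbb{Z})^3\\ n\neq 0}}\hat{u}_n\,e^{in\cdot x} = u(x)-\hat{u}_0 = \left(\mathbb{P}_{\neq 0}u\right)(x),
\]
where the last equality is the definition of $\mathbb{P}_{\neq 0}$ from \thref{Definition 2.2}.

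This lemma is essentially bookkeeping, so there is no serious obstacle; the only point that requires a line of care is justifying the vanishing of the Fourier coefficients off the sublattice, which is why I would carry out the substitution argument explicitly rather than invoke it as folklore.
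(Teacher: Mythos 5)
Your proof is correct, and since the paper states \thref{Lemma 2.5} without proof, there is no argument to compare against; the substitution argument showing that the Fourier coefficients of a $(\mathbb{T}/L)^3$-periodic function vanish off the sublattice $(L\mathbb{Z})^3$, followed by the observation that $\chi_\kappa$ vanishes at every nonzero lattice point once $L>\kappa$, is exactly the standard route. (The sign in the exponent should read $e^{-2\pi i n_j/L}$ if you substitute $y\mapsto y+\frac{2\pi}{L}e_j$, but this does not affect the conclusion.)
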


\chapter{Outline of the Convex Integration Scheme}
 \label{Chapter 3}
The convex integration scheme is an iterative procedure giving rise to solutions to several deterministic and stochastic PDE's. Also in the present paper we construct, based on a suitable starting point, a solution $v_q$ to \eqref{1.2} on $(-\infty,\tau]$ perturbed by an error term $\mathring{R}_q$, called Reynolds stress, on the level $q\in \mathbb{N}_0$. While the iterations $v_q$ approach the desired velocity $v$, solving \eqref{1.2} on $[0,\tau]$, the stress tensor $\mathring{R}_q$ becomes step by step infinitesimally small. The convex integration technique provides typically a way to construct even an infinite number of such solutions, as it is also the case in the present paper.  \\
For the construction of our sequence $(v_q,\mathring{R}_q)_{q\in \mathbb{N}_0}$ we previously have to fix some parameters, done in the following section, Section \ref{Section 3.1}. Section \ref{Section 3.2} is concerned with the key bounds that each pair $(v_q,\mathring{R}_q)$ has to fulfill. In Section \ref{Section 3.3} we recall the definitions of intermittent jets used to give an explicit expression of $v_{q+1}$. Section \ref{Section 3.4} is then concerned with the verification of the key bounds of the next iteration step $v_{q+1}$, culminating in the proof of convergence of the sequence $\left(v_q\right)_{q\in \mathbb{N}_0}$. In Section \ref{Section 3.5} we decompose the Reynolds stress and in Section \ref{Section 3.6} we verify the key bound for this tensor on the level $q+1$. We close this chapter in Section \ref{Section 3.7} by proving that our constructed sequence is adapted and deterministic at any time $t\leq 0$.

\section{Choice of Parameters}\label{Section 3.1}
  \addtocontents{toc}{\par\noindent\rule{\textwidth}{0.4pt}
 \hspace*{-10pt} 
\begin{minipage}{0.15\textwidth}
\includegraphics[width=\textwidth]{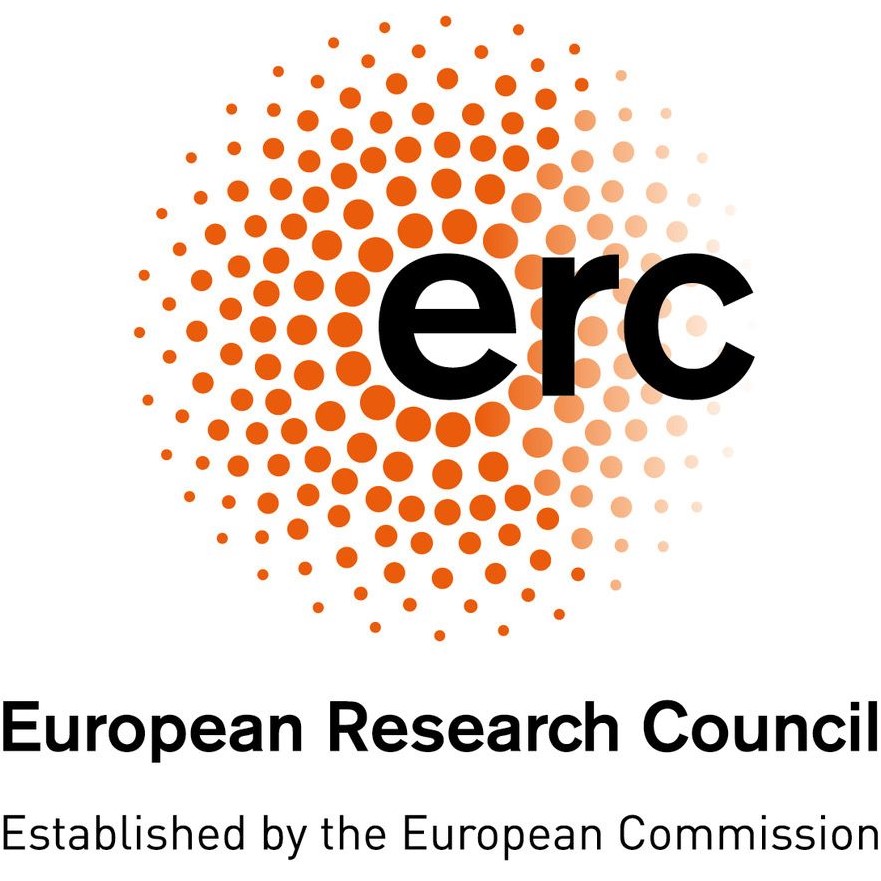}
\end{minipage}
\begin{minipage}[c]{0.85\textwidth}
This project has received funding from the European Research Council (ERC) under the European Union's Horizon 2020 research and innovation programme (grant agreement No. 949981). 
\end{minipage} \\
Faculty of mathematics, University of Bielefeld, D-33501 Bielefeld, Germany,\\
\Letter \, \href{mailto:sberkeme@math.uni-bielefeld.de}{sberkeme@math.uni-bielefeld.de}
  }

To simplify the upcoming computations we assume $L=1$ in \thref{Theorem 1.2} and for sufficiently large $a \in \mathbb{N}, b\in 7\mathbb{N}$ and sufficiently small $\alpha,\beta \in (0,1)$, we require  \\
\begin{tabular}{ll}
 \parbox{0.5\textwidth}{
\begin{itemize}
\item $161\alpha<\frac{1}{7}$,
\item $\alpha \iota> 4\beta b^2$,
\item $\alpha b\geq \frac{10}{\iota}-4$,
\end{itemize} }
 &
 \parbox{0.5\textwidth}{
 \begin{itemize}
\item $a\geq 3600$,
\item $a^{\beta b}\geq 2$,
\item $a^{\frac{3\alpha}{2}b+2}\geq m^2\bar{e}$
\end{itemize}}
\end{tabular} \\
and define \\

\begin{tabular}{ll}
 \parbox{0.5\textwidth}{
\begin{itemize}
\item[]$\lambda_q:=a^{(b^q)}$,
\item[] $\delta_q:=\lambda_1^{2\beta}\lambda_q^{-2\beta}$,
\end{itemize} }
 &
 \parbox{0.5\textwidth}{
 \begin{itemize}
\item[] $\ell:= \lambda_{q+1}^{-\frac{3\alpha}{2}}\lambda_q^{-2}$,
\item[] $m:=\exp{(4)}$
\end{itemize}}
\end{tabular} \\
for $q\in \mathbb{N}_0$. As a consequence the following estimates hold

\begin{subequations}\label{3.1}
  \begin{align}
  \ell \lambda_q^{5/\iota}\leq \lambda_{q+1}^{-\alpha}, \label{3.1a}\\
  m^2\bar{e}\leq \ell^{-1}\leq \lambda_{q+1}^{2\alpha}. \label{3.1b}
  \end{align}
  \end{subequations}
  
\noindent Particularly, that means $\ell \in (0,1)$ and we have developed an increasing and a decreasing sequence $\left(\lambda_q\right)_{q \in \mathbb{N}_0}\subseteq \mathbb{N}$, $\left(\delta_q\right)_{q\geq 2}\subseteq (0,1)$, which diverges to $\infty$ and converges to $0$, respectively. We therefore find some $q_0 \in \mathbb{N}$ so that the above parameters additionally fulfill

\hspace{-1.3cm}
\begin{tabular}{lr}
 \parbox{0.51\textwidth}{
\begin{itemize}
\item $10\, M_0 \bar{e}\lambda_{q+1}^{-\alpha/2+2\beta b^2}\leq 1$,
\item $\frac{M}{4|\Lambda|}\lambda_{q+1}^{33\alpha-1/7}\leq 1$,
\item $33(2\pi)^{3/2}M_0 m^{9/4}(\bar{e}+\widetilde{e})  \lambda_{q+1}^{-\alpha \big(\frac{3}{2}\iota-\frac{1}{2}\big)} \leq \frac{1}{1500m^{1/2}}$,
\item $S\left(\frac{M}{4|\Lambda
|}+\left(\frac{M}{4|\Lambda|}\right)^2 \right) \lambda_{q+1}^{-100\alpha}\leq \frac{1}{1500m^{1/2}}$,
\item $3\widetilde{S}\max\limits_{n\in\{2,3,4\}}\left(\frac{M}{4|\Lambda|}\right)^n\lambda_{q+1}^{-68 \alpha}\leq \frac{1}{1500m^{1/2}}$,
\item $\hat{S}\left( \frac{M}{4|\Lambda|}\right)^2  \lambda_{q+1}^{-111 \alpha}\leq \frac{1}{1500m^{1/2}}$,
\end{itemize} }
 &
 \parbox{0.5\textwidth}{
 \begin{itemize}
\item$2K\frac{M}{4|\Lambda|} \lambda_{q+1}^{-12/7}+K\left(\frac{M}{4|\Lambda|}\right)^2  \lambda_{q+1}^{-6/7} \leq \frac{1}{2}$, 
\item $ \widetilde{K}\lambda_q^5\lambda_{q+1}^{-5}\leq \frac{1}{2}$,
\item $\hat{K}\lambda_{q+1}^{-147\alpha}\leq \frac{1}{80m^{3/4}}$,
\item $K^\ast \, 2\pi \, \frac{M}{4|\Lambda|} \lambda_{q+1}^{-5/21} \leq \frac{1}{5m^{1/2}}$,

\item $ K^\prime \left(M_0+\frac{M}{4|\Lambda|} \right)^3 \lambda_{q+1}^{-\frac{1}{14}}\leq \frac{1}{5m^{1/2}}$,
\item $K^{\prime \prime} \left(\left(\frac{M}{4|\Lambda|} \right)^2+\left(\frac{M}{4|\Lambda|} \right)^4\right)  \lambda_{q+1}^{-1/7}\leq \frac{1}{5m^{1/2}}$
\end{itemize}}
\end{tabular} \\
for every $q\geq q_0$, where $K, \,\widetilde{K}, \, \hat{K}, \, K^\ast, \, K^\prime, \, K^{\prime \prime}, \, S, \, \widetilde{S},\,\hat{S}, \, \frac{M}{4|\Lambda|}$ and $M_0\geq 1$ are universal constants determined by \eqref{3.28}, \eqref{3.29}, \eqref{3.38}, \eqref{3.39}, \eqref{3.40}, \eqref{3.41}, \eqref{3.45}, \eqref{3.46}, \eqref{3.47}, \eqref{3.7} and \eqref{3.26}, respectively.\\
Note that the assumption $33(2\pi)^{3/2}M_0 m^{9/4}(\bar{e}+\widetilde{e})  \lambda_{q+1}^{-\alpha \big(\frac{3}{2}\iota-\frac{1}{2}\big)} \leq \frac{1}{1500m^{1/2}}$ requires $\iota >\frac{1}{3}$. Choosing furthermore $a,b$ sufficiently large and $\alpha,\beta$ sufficiently small enough, permits to suppose $q_0=1$.

\section{Start of the Iteration}\label{Section 3.2}
In view of \eqref{1.2} we are concerned with an adapted velocity field $v_q$ and a symmetric traceless matrix $\mathring{R}_q$, solving the transformed Navier-Stokes-Reynolds system reads 
\begin{align}
\begin{split}
\partial_t v_q+ \frac{1}{2}v_q-\Delta v_q+ \Theta \divs(v_q \otimes v_q)+\nabla p_q&=\divs(\mathring{R}_q),\\ \label{3.2}
\divs v_q&=0
\end{split}
\end{align}
and that obey 
\begin{subequations}\label{3.3}
\begin{align} 
&\|v_q\|_{C_tL^2}\leq M_0 \Big( 1+\sum_{r=1}^q \delta_r^{1/2}\Big)m\bar{e}^{1/2},\label{3.3a}\\
&\|v_q\|_{C_{t,x}^1}\leq  \lambda_q^5 m\bar{e}^{1/2},\label{3.3b} \\
&\|\mathring{R}_q\|_{C_tL^1}\leq \frac{1}{1500}  \delta_{q+2}\Theta^{-2}(t)e(t)\label{3.3c}
\end{align}
\end{subequations}
for $q\in \mathbb{N}_0$, any $t\in (-\infty,\tau]$ and some universal constant $M_0\geq 1$. Here we have to include also negative times in order to avoid several problems by decomposing the Reynolds stress in Section \ref{Section 3.5}. For this purpose the energy $e$ and the Brownian motion $B$ are continuously extended to functions on $(-\infty,\tau]$ by taking them equal to the value at $t=0$. We furthermore set $\sum_{r=1}^0:=0$ and point out that $\sum_{r=1}^q \delta_r^{1/2}$ is bounded by $2$. Indeed, thanks to the assumed $a^{\beta b} \geq 2$ it holds
\begin{align} \label{3.4}
\sum_{r=1}^q \delta_r^{1/2}\leq \sum_{r=1}^\infty a^{\beta b-\beta rb}=\sum_{r=0}^\infty (a^{-\beta b})^r =\frac{1}{1-a^{-\beta b}}\leq 2.
\end{align}
Moreover we require
\begin{align}\label{3.5}
\frac{3}{4}\delta_{q+1} \Theta^{-2}(t)e(t)\leq \Theta^{-2}(t)e(t)- \|v_q(t)\|^2_{L^2}\leq \frac{5}{4}\delta_{q+1} \Theta^{-2}(t)e(t)
\end{align}
at any time $t$ up to the stopping time $\tau$. In other words, the given energy $e$ will be gradually approximated by the kinetic energies of the iterations $e^Bv_q$. \\
If we set $\mathcal{F}_t:=\mathcal{F}_0$ whenever $t<0$, the pair $\big(v_0,\mathring{R}_0\big):=(0,0)$ is evidently a deterministic, hence $\left(\mathcal{F}_t\right)_{t\in \mathbb{R}}$-adapted, weak solution to \eqref{3.2}, satisfying \eqref{3.3} as well as \eqref{3.5}. Therefore we may start our iteration procedure with this pair.

\boldmath
\section{Construction of $v_{q+1}$} \label{Section 3.3}
\unboldmath
In order to obtain more regularity we refrain in defining the next iteration $v_{q+1}$ in mere dependence of $v_q$. Instead we intend to define $v_{q+1}$ in terms of the mollified velocity field $v_\ell$ and a perturbation $w_{q+1}$, pointed out in the two subsequent sections. For short, the new velocity field will be given as
\begin{align*}
v_{q+1}:=v_{\ell}+w_{q+1}.
\end{align*}
\subsection{Mollification} \label{Section 3.3.1}
Let us start with the mollification of the velocity $v_q$. For this end we consider the standard  mollifier
\begin{align*}
\phi(x):=\left\{\begin{array}{ll} C_{\text{space}} \exp{\left(\frac{1}{|x|^2-1}\right)}, &\text{if } |x|<1, \\
         0, & \text{if } |x| \geq 1, \end{array}\right.
\end{align*}
on $\mathbb{R}^3$ and the shifted mollifier
\begin{align*}
\varphi(t):=\left\{\begin{array}{ll} C_{\text{time}} \exp{\left(\frac{1}{|t-1/2|^2-1/4}\right)}, &\text{if } t \in(0,1), \\
         0, & \text{if } t\notin (0,1), \end{array}\right.
\end{align*}
on $\mathbb{R}$, where $C_{\text{space}},C_{\text{time}}>0$ are chosen, such that $\int_{\mathbb{R}^3}\phi(x)\, dx=1$ and $\int_{\mathbb{R}}\varphi(t)\, dt=1$, as usual. Thus the convolution of $v_q$ with the rescaled mollifiers $\phi_\ell:=\frac{1}{\ell^3}\phi(\frac{\cdot}{\ell})$ and $\varphi_\ell:=\frac{1}{\ell}\varphi(\frac{\cdot}{\ell})$ yield a smooth function in space and time, and of $\Theta$ with $\varphi_\ell$ just in time.\\
For short, we consider
\begin{align*}
v_\ell:=(v_q \ast_t \varphi_\ell)\ast_x\phi_\ell, \quad \mathring{R}_\ell:= (\mathring{R}_q \ast_t \varphi_\ell)\ast_x\phi_\ell, \quad \Theta_\ell:=\Theta \ast_t \varphi_\ell,
\end{align*}
where $\mathring{R}_\ell$ remains traceless, because the mollification acts only componentwise. \\
It is worth noting that the adaptedness of $v_\ell,\, \mathring{R}_\ell$ and $\Theta_\ell$ follows from the fact that the support of $\varphi$ lives in $\mathbb{R}^+$. In fact, let $\Pi_\ell$ be the set that contains all partitions of $[0,\ell]$ of the form $\{0=s_0<s_1<\ldots<s_n=\ell\}$ for some $n\in \mathbb{N}$. Then for any $t\in (-\infty,\tau]$ and $x\in \mathbb{T}^3$ the function 
\begin{align*}
\omega \mapsto (v_q \ast_t \varphi_\ell)(t,x,\omega)&=\int_0^\ell v_q(t-s,x,\omega)\varphi_\ell(s) \,ds\\
&=\inf_{\Pi_\ell}\sum_{i=1}^n(s_i-s_{i-1})\sup_{s\in(s_{i-1},s_i)} \underbrace{v_q(t-s,x,\omega)}_{\mathcal{F}_{t-s}\subseteq \mathcal{F}_t\text{-meas.}}\hspace{-0.35cm}\overbrace{\varphi_\ell(s)}^{\text{deterministic}}
\end{align*}
is $\mathcal{F}_t$-measurable.\\
Taking into account that the convolution in space does not influence the behavior of $(v_q \ast_t \varphi_\ell)$ in time, $v_\ell$ inherits the $\left(\mathcal{F}_t\right)_{t\in \mathbb{R}}$-adaptedness and so do $\mathring{R}_\ell$ and $\Theta_\ell$. \\
It is easy to see that $v_\ell$ is close to $v_q$ w.r.t. $\|\cdot\|_{L^2}$ at any time up to the stopping time $\tau$ and fulfills \eqref{3.3} as well. More precisely, the subsequent Lemma holds.

\begin{lemma} \thlabel{Lemma 3.1}
The mollification $v_\ell$, defined above, enjoys the following bounds
\begin{subequations}\label{3.6}
\begin{align}
&\|v_q-v_\ell\|_{C_tL^2}\leq (2\pi)^{3/2}\ell\|v_q\|_{C^1_{t,x}}\leq (2\pi)^{3/2} \delta_{q+1}^{1/2}m \bar{e}^{1/2},\label{3.6a}\\
&\|v_\ell\|_{C_tL^2}\leq \|v_q\|_{C_tL^2}\leq M_0\Big(1+\sum_{r=1}^q \delta_r^{1/2}\Big)m\bar{e}^{1/2},\label{3.6b}\\
&\|v_\ell\|_{C_{t,x}^N}\lesssim \ell^{-N+1}\|v_q\|_{C_{t,x}^1}\lesssim  \ell^{-N}\lambda_{q+1}^{-\alpha} m \bar{e}^{1/2}\label{3.6c}
\end{align}
\end{subequations}
for any $t \in (-\infty,\tau]$ and $N\geq 1$.
\end{lemma}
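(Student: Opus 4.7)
The plan is to prove each of the three estimates using standard mollification arguments, then combine with the iteration bounds (3.3a), (3.3b) and the parameter inequality (3.1a) to obtain the second inequality in (a) and (c).

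For inequality (a), I would write
\[
(v_q-v_\ell)(t,x)=\int_0^\ell\!\!\int_{B_\ell(0)}\bigl[v_q(t,x)-v_q(t-s,x-y)\bigr]\varphi_\ell(s)\phi_\ell(y)\,dy\,ds,
\]
and use the fundamental theorem of calculus together with the support of $\varphi_\ell,\phi_\ell$ to bound the integrand pointwise by $\ell\|v_q\|_{C^1_{t,x}}$. Integrating the square over $\mathbb{T}^3$ and taking supremum over $t\in(-\infty,\tau]$ produces the factor $|\mathbb{T}^3|^{1/2}=(2\pi)^{3/2}$, giving the first inequality. For the second inequality I would insert (3.3b) to get $\ell\|v_q\|_{C^1_{t,x}}\le \ell\lambda_q^5 m\bar e^{1/2}$, apply (3.1a) — noting $\lambda_q^5\le \lambda_q^{5/\iota}$ since $\iota<1$ — to obtain $\ell\lambda_q^5\le \lambda_{q+1}^{-\alpha}$, and finally dominate $\lambda_{q+1}^{-\alpha}$ by $\delta_{q+1}^{1/2}=\lambda_1^{\beta}\lambda_{q+1}^{-\beta}$, which holds because the standing assumption $\alpha\iota>4\beta b^2$ forces $\alpha>\beta$, so $\lambda_{q+1}^{\beta-\alpha}\le 1\le\lambda_1^\beta$.

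For inequality (b), I would apply Young's convolution inequality twice: since $\varphi_\ell$ and $\phi_\ell$ have unit $L^1$ mass,
\[
\|v_\ell(t,\cdot)\|_{L^2}\le \|\varphi_\ell\|_{L^1(\mathbb{R})}\,\|\phi_\ell\|_{L^1(\mathbb{R}^3)}\sup_{s\in[0,\ell]}\|v_q(t-s,\cdot)\|_{L^2}\le \|v_q\|_{C_tL^2},
\]
and taking the supremum in $t\in(-\infty,\tau]$ gives the first bound; the second bound is simply (3.3a).

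Inequality (c) follows from the standard mollifier trick of placing $N-1$ derivatives on $\phi_\ell\otimes\varphi_\ell$ and one derivative on $v_q$: for any multi-index of total order $N\ge 1$,
\[
\|\partial_t^n D^\alpha v_\ell\|_{C_tL^\infty}\lesssim \ell^{-(N-1)}\|v_q\|_{C^1_{t,x}},
\]
with the implicit constant depending only on the $L^1$-norms of the derivatives of $\phi$ and $\varphi$. Summing over $n+|\alpha|\le N$ yields $\|v_\ell\|_{C^N_{t,x}}\lesssim \ell^{-(N-1)}\|v_q\|_{C^1_{t,x}}$. Applying (3.3b) and (3.1a) as above gives the second estimate $\lesssim \ell^{-N}\lambda_{q+1}^{-\alpha} m\bar e^{1/2}$. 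None of the steps is particularly subtle; the only thing to verify is that the mollified object is really well-defined on $(-\infty,\tau]$, which holds because $v_q$ and $\Theta$ were extended by their $t=0$ values and $\varphi_\ell$ is supported in $[0,\ell]\subset\mathbb{R}_+$, so evaluating at $t\le\tau$ only requires values of $v_q$ at times $\le\tau$.
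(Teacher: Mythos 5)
Your proof is correct and is precisely the standard mollification argument the paper has in mind when it states ``the proof is rather straightforward, so that we do not pursue this here.'' The three ingredients you use --- the pointwise bound $|v_q(t,x)-v_q(t-s,x-y)|\le \ell\|v_q\|_{C^1_{t,x}}$ on the support of the rescaled mollifiers, Young's (or Minkowski's) inequality for the $L^2$ contraction, and the ``shift all but one derivative onto the mollifier'' device --- are exactly what is needed, and your verification that the parameter estimates \eqref{3.1a} and $\alpha>\beta$ (from $\alpha\iota>4\beta b^2$ with $\iota<1$, $b\ge 7$) close the chain to $\delta_{q+1}^{1/2}$ and $\ell^{-N}\lambda_{q+1}^{-\alpha}$ is sound, as is your closing remark that the one-sided support of $\varphi_\ell$ in $\mathbb{R}_+$ makes $v_\ell$ well-defined on $(-\infty,\tau]$.
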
 
 \noindent The proof is rather straightforward, so that we do not pursue this here.

\subsection{Perturbation} \label{Section 3.3.2}
Let us now have a closer look at the perturbation $w_{q+1}$. We will decompose it into three parts: the principle part $\wpr$, the incompressibility corrector $\wc$ and the temporal corrector $\wt$. Each of them will be defined in terms of the amplitude functions and the intermittent jets, introduced and worked out in \cite{BV19b} and \cite{BV19a}, respectively. In what follows we will give a short review of the necessary facts.\\
First, we recall the essential geometric lemma from \cite{BV19a}.

\begin{lemma} [Geometric Lemma]\thlabel{Lemma 3.2} 
There exists a family of smooth real-valued functions $\left(\gamma_\xi\right)_{\xi \in \Lambda}$, where $\Lambda$ is a set of finite directions, contained in $\mathbb{S}^2\cap\mathbb{Q}^3$, so that each symmetric $3\times 3$ matrix $R$, satisfying $\|R-\Id\|_F\leq \frac{1}{2}$, admits the representation
\begin{align*}
R=\sum_{\xi\in\Lambda}\gamma_\xi^2(R)(\xi \otimes \xi).
\end{align*}
\end{lemma}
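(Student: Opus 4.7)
The plan is to reduce the claim to six-dimensional linear algebra, exploiting that $\mathbb{R}^{3\times 3}_{\text{sym}}$ has dimension $6$ and is spanned by rank-one matrices of the form $\xi \otimes \xi$ with $\xi \in \mathbb{S}^2$. By the density of $\mathbb{S}^2 \cap \mathbb{Q}^3$ in $\mathbb{S}^2$, I would first choose a finite set $\Lambda \subset \mathbb{S}^2 \cap \mathbb{Q}^3$, taken large enough to contain the standard basis vectors $e_1,e_2,e_3$ (so as to inherit the positive decomposition $\text{Id}=\sum_{i=1}^3 e_i\otimes e_i$) together with additional rational directions chosen close to the normalised vectors $\tfrac{1}{\sqrt{2}}(e_i+e_j)$ for $i<j$, so that the family $\{\xi \otimes \xi\}_{\xi \in \Lambda}$ contains a basis of $\mathbb{R}^{3\times 3}_{\text{sym}}$ and generates a convex cone with non-empty interior.

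Next I would construct the coefficient functions. A clean route is to fix a linear right inverse $\mathcal{G}\colon\mathbb{R}^{3\times 3}_{\text{sym}}\to\mathbb{R}^\Lambda$ of the surjective linear map $(c_\xi)_{\xi\in\Lambda}\mapsto \sum_{\xi\in\Lambda} c_\xi(\xi\otimes\xi)$, and to arrange (by a translation along the kernel, or by a convex combination of right inverses) that $\mathcal{G}(\text{Id})$ has all strictly positive entries; this is possible because the preimage of $\text{Id}$ is an affine subspace meeting the positive orthant, as witnessed by the standard decomposition. The resulting map $R\mapsto c_\xi(R):=[\mathcal{G}(R)]_\xi$ is linear and hence smooth, and by continuity of $R\mapsto c_\xi(R)$ together with $c_\xi(\text{Id})>0$ for every $\xi\in\Lambda$, there exists an open neighbourhood of $\text{Id}$ on which all $c_\xi(R)>0$. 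After enlarging $\Lambda$ and/or rescaling $\mathcal{G}$ so that the minimum value $\min_\xi c_\xi(\text{Id})$ is sufficiently large, this neighbourhood can be made to contain the closed Frobenius ball $\{R:\|R-\text{Id}\|_F\leq \tfrac{1}{2}\}$. Finally, defining $\gamma_\xi(R):=\sqrt{c_\xi(R)}$ produces the desired smooth real-valued family, and the representation $R=\sum_{\xi\in\Lambda}\gamma_\xi^2(R)(\xi\otimes\xi)$ holds by construction, since the square root is smooth on $(0,\infty)$.

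The main obstacle is quantitative: guaranteeing that the positive cone generated by $\{\xi\otimes\xi\}_{\xi\in\Lambda}$ contains the Frobenius ball of radius $\tfrac{1}{2}$ about $\text{Id}$ in its relative interior, with coefficients depending smoothly on $R$. This is handled by enlarging $\Lambda$ within $\mathbb{S}^2\cap\mathbb{Q}^3$ until the base coefficients $c_\xi(\text{Id})$ are sufficiently large to absorb a perturbation of Frobenius norm $\tfrac{1}{2}$; since $\mathbb{S}^2\cap\mathbb{Q}^3$ is dense in $\mathbb{S}^2$ and every symmetric matrix is a finite non-negative combination of rank-one matrices $\xi\otimes\xi$, this is always achievable. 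Once the combinatorial data $\Lambda$ and the right inverse $\mathcal{G}$ are in place, the smoothness and positivity of $\gamma_\xi$ on the required ball are automatic.
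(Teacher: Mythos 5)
The paper does not actually prove \thref{Lemma 3.2}; it is recalled verbatim from \cite{BV19a}, which in turn traces it back to De~Lellis--Sz\'ekelyhidi \cite{DLS13} and \cite{BDLIS15}. So there is no in-text proof to compare against, only the standard argument from the convex-integration literature.

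Your framework — span $\mathbb{R}^{3\times 3}_{\mathrm{sym}}$ by $\xi\otimes\xi$, fix an affine right inverse $\mathcal{G}$ of $T\colon(c_\xi)\mapsto\sum_\xi c_\xi\,\xi\otimes\xi$ with $\mathcal{G}(\Id)$ strictly positive, and set $\gamma_\xi:=\sqrt{c_\xi}$ — is a correct start and does produce a smooth decomposition with $c_\xi>0$ on \emph{some} neighbourhood of $\Id$. The genuine gap is in the quantitative step where you try to force that neighbourhood to contain $\bar B_{1/2}(\Id)$. You propose to enlarge $\Lambda$ (or "rescale $\mathcal{G}$") until $\min_\xi c_\xi(\Id)$ is large. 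This cannot work: taking the trace of $\Id=\sum_\xi c_\xi(\Id)\,\xi\otimes\xi$ with $|\xi|=1$ forces $\sum_\xi c_\xi(\Id)=3$, so $\min_\xi c_\xi(\Id)\le 3/|\Lambda|$, which \emph{decreases} as $\Lambda$ grows. And rescaling $\mathcal{G}$ is unavailable, since $T\circ(\lambda\mathcal{G})=\lambda\,\mathrm{id}\neq\mathrm{id}$ for $\lambda\neq 1$. What actually governs the radius of positivity is the ratio $c_\xi(\Id)/\|[\mathcal{G}]_\xi\|$ for each $\xi$, and your argument gives no control on that ratio. (A side remark: "every symmetric matrix is a finite non-negative combination of $\xi\otimes\xi$" is false — only positive semidefinite matrices are, which fortunately is all one needs here since $\|R-\Id\|_F\le\tfrac12$ forces $R$ to be positive definite.)

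The standard way to close the gap is a compactness plus partition of unity argument, in which your local construction appears as a chart. Since $\bar B_{1/2}(\Id)$ sits inside the positive definite cone, each $R_0$ in it admits, after choosing rational directions near its eigenframe, a strictly positive rank-one decomposition that your linear-algebra/implicit-function step extends smoothly and positively to a small ball $B_{\delta_0}(R_0)$. Cover $\bar B_{1/2}(\Id)$ by finitely many such balls $B_i$ with pairwise disjoint direction sets $\Lambda_i$ and local coefficients $c_{\xi,i}$, pick a partition of unity $\chi_i=\psi_i^2$ with $\psi_i$ smooth and subordinate to the cover, set $\Lambda:=\bigsqcup_i\Lambda_i$ and $\gamma_\xi:=\psi_i\sqrt{c_{\xi,i}}$ for $\xi\in\Lambda_i$. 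Then $\sum_\xi\gamma_\xi^2(R)\,\xi\otimes\xi=\sum_i\chi_i(R)\,R=R$ and each $\gamma_\xi$ is smooth on $\bar B_{1/2}(\Id)$. What you are missing is precisely this globalisation.
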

\vspace{.5cm}
 \noindent Second, based on this lemma, we define for all $N\in \mathbb{N}$ the constant
\begin{align}
M:=8 |\Lambda|(1+8\pi^3)^{1/2} \sup_{\xi \in \Lambda}\Big(\|\gamma_\xi\|_{C}+\sum_{|\alpha|\leq N}\|D^\alpha\gamma_\xi\|_C\Big). \label{3.7}
\end{align}

\subsubsection{Amplitude Functions}\label{Section 3.3.2.1}
The function $\gamma_\xi$ in the geometric \thref{Lemma 3.2} is used to define the amplitude functions as 
\begin{align*}
a_{(\xi)}(t,x,\omega)&:=a_{\xi,q+1}(t,x,\omega):=(2\pi)^{3/2} \rho^{1/2}(t,x,\omega)\gamma_\xi\left(\Id-\frac{\mathring{R}_\ell(t,x,\omega)}{\rho(t,x,\omega)}\right)
\end{align*}
with 
\begin{align*}
\rho(t,x,\omega)&:=2\sqrt{\ell^2+\|\mathring{R}_\ell(t,x,\omega)\|_F^2}+\Theta_\ell(t,\omega)\eta_\ell(t,\omega),\\
\\
\eta_q(t,\omega)&:=  \frac{1}{3 (2\pi)^3}\Big[\Theta^{-2} (t,\omega)e(t)(1-\delta_{q+2})-\|v_q(t)\|^2_{L^2}\Big]
\end{align*}
for any $t \in (-\infty, \tau],\, x\in \mathbb{R}^3,\, \omega \in \Omega$, where $\eta_\ell$ denotes the mollification in time of $\eta_q$.\\
Since we start our iteration procedure with $\mathring{R}_0=0$, we include a small perturbation $\ell$ in the definition of $\rho$ to avoid its degeneracy, whereas the function $\eta_q$ should pump energy into the system. This enables us to confirm the key bounds \eqref{3.3} at level $q+1$ in Section \ref{Section 3.4.2} and \ref{Section 3.6.1}, respectively. \\
Moreover, we point out that \eqref{3.5} and our choice of parameters $a^{\beta b}\geq 2, b\geq 7$ (cf. Section \ref{Section 3.1}) ensure
\begin{align*}
3(2\pi)^3\eta_q &\geq  \Theta^{-2} e \left(\frac{3}{4}\delta_{q+1}-\delta_{q+2}\right) \geq \Theta^{-2} e (a^{-2\beta b (b-1)}\delta^b_{q+1}-\delta_{q+2})=0,
\end{align*}
which entails
\begin{align*}
\rho \geq 2 \ell \quad \text{ and } \quad \rho \geq 2\|\mathring{R}_\ell\|_F.
\end{align*}
As a consequence $\Id-\frac{\mathring{R}_\ell}{\rho}$ fulfills the condition $\big\|\Id-\frac{\mathring{R}_\ell}{\rho}-\Id\big\|_F\leq \frac{1}{2}$ in the geometric \thref{Lemma 3.2}, so that the amplitude functions are actually well-defined.\\
Now we would already like to sum up some properties of these functions here.
 
\begin{lemma}\thlabel{Lemma 3.3}
The amplitude functions enjoy the following bounds
\begin{subequations} \label{3.8}
\begin{align}
&\|a_{(\xi)}\|_{C_tL^2}\lesssim \frac{M}{4 |\Lambda|} \delta_{q+1}^{1/2}m^{3/8}\bar{e}^{1/2},\label{3.8a}\\
&\|a_{(\xi)}\|_{C_{t,x}^N}\lesssim \frac{M}{4 |\Lambda| } \ell^{-8-7N} \delta_{q+1}^{1/2} m^{3/8}\bar{e}^{1/2}\label{3.8b}
\end{align}
\end{subequations}
for any $t \in (-\infty, \tau]$ and $N\geq 0$.
\end{lemma}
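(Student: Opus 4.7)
Write $a_{(\xi)}=(2\pi)^{3/2}\rho^{1/2}\gamma_\xi\bigl(\Id-\mathring{R}_\ell/\rho\bigr)$, so that both bounds reduce to controlling $\rho$ and its derivatives. All $\|\gamma_\xi\|_C$ and $\|D^k\gamma_\xi\|_C$ factors can be absorbed into the constant $M/(4|\Lambda|)$ from \eqref{3.7}. Throughout I would use $|B_t|\leq L=1$ on $(-\infty,\tau]$ to obtain $\Theta\leq e\leq m^{1/4}$ and $\Theta^{-2}\leq e^2=m^{1/2}$.

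\textbf{The $C_tL^2$-bound \eqref{3.8a}.} Starting from the pointwise identity $a_{(\xi)}^2=(2\pi)^3\rho\,\gamma_\xi^2(\cdot)$, I would estimate
\[
\|a_{(\xi)}(t)\|_{L^2}^{2}\leq (2\pi)^3\|\gamma_\xi\|_{C}^{2}\int_{\mathbb{T}^3}\rho(t,x)\,dx,
\]
and split $\int\rho\,dx$ into its three constituent pieces. The piece $\int 2\sqrt{\ell^2+\|\mathring R_\ell\|_F^2}\,dx$ is bounded by $4\pi^3\ell+2\|\mathring R_\ell\|_{L^1}\lesssim\ell+\|\mathring R_q\|_{C_tL^1}$, which by \eqref{3.3c} is $\lesssim\delta_{q+2}\Theta^{-2}e\lesssim\delta_{q+1}m^{1/2}\bar e$. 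For the piece $(2\pi)^3\Theta_\ell\eta_\ell$, \eqref{3.5} together with the definition of $\eta_q$ yields
\[
0\leq 3(2\pi)^3\eta_q=\Theta^{-2}e(1-\delta_{q+2})-\|v_q\|_{L^2}^{2}\leq \tfrac{5}{4}\delta_{q+1}\Theta^{-2}e\lesssim\delta_{q+1}m^{1/2}\bar e;
\]
mollification preserves this bound, and multiplying by $\Theta_\ell\leq m^{1/4}$ gives $\Theta_\ell\eta_\ell\lesssim\delta_{q+1}m^{3/4}\bar e$. Summing up yields $\|\rho\|_{C_tL^1}\lesssim\delta_{q+1}m^{3/4}\bar e$, and the square root furnishes \eqref{3.8a}.

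\textbf{The $C_{t,x}^N$-bound \eqref{3.8b}.} Here I would combine two applications of the Faà di Bruno formula, one for $f(x)=x^{1/2}$ applied to $\rho$ and one for $\gamma_\xi$ applied to $\Id-\mathring R_\ell/\rho$. The ingredients are the pointwise lower bound $\rho\geq 2\ell$ (which follows from $\rho\geq 2\sqrt{\ell^2}=2\ell$) and the standard mollification estimates
\[
\|\mathring R_\ell\|_{C_{t,x}^k}\lesssim\ell^{-3-k}\|\mathring R_q\|_{C_tL^1},\qquad \|\Theta_\ell\|_{C_t^k},\ \|\eta_\ell\|_{C_t^k}\lesssim\ell^{-k}\|\cdot\|_{C_t},
\]
the first arising because $\phi_\ell$ converts $L^1_x$ into $L^\infty_x$ at cost $\ell^{-3}$ and each derivative contributes another $\ell^{-1}$, while the second is purely temporal since $\Theta$ and $\eta_q$ have no spatial dependence. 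Every space-time derivative of $\rho$ is then bounded using $|D\rho|\lesssim|\mathring R_\ell\cdot D\mathring R_\ell|/\sqrt{\ell^2+\|\mathring R_\ell\|_F^2}+|D(\Theta_\ell\eta_\ell)|$ and iterated chain-rule expansions, while each derivative of $1/\rho$ produces one factor of $\rho^{-1}\leq(2\ell)^{-1}$ times further derivatives of $\rho$. Collecting all contributions and using the worst case at each step yields the prefactor $\ell^{-8-7N}$, with the $L^2$-factor $\delta_{q+1}^{1/2}m^{3/8}\bar e^{1/2}$ carried through from the pointwise estimate of $\rho^{1/2}$.

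\textbf{Main obstacle.} The hard part is the combinatorial Faà di Bruno bookkeeping for $\rho^{1/2}\gamma_\xi(\Id-\mathring R_\ell/\rho)$: each derivative interacts multiplicatively with $\rho^{-1}$ and $\rho^{-1/2}$, so the negative powers of $\ell$ accumulate rapidly, and one must verify that the chosen exponent $\ell^{-8-7N}$ really majorizes every Leibniz/chain-rule term. The precise exponent is slightly lossy but tight enough to be compatible with the parameter choices \eqref{3.1} used later.
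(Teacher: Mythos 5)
Your approach is essentially the same as the paper's: for \eqref{3.8a}, bound $\|\rho\|_{C_tL^1}$ by splitting $\rho$ into its three pieces and using \eqref{3.3c}, $\eta_q\geq 0$ and \eqref{3.5}; for \eqref{3.8b}, apply Leibniz to the product $\rho^{1/2}\cdot\gamma_\xi(\Id-\mathring R_\ell/\rho)$ and control each factor via a chain-rule/composition estimate together with mollification bounds on $\mathring R_\ell$, $\Theta_\ell$, $\eta_\ell$ and the lower bound $\rho\geq 2\ell$. The minor differences: you invoke Fa\`a di Bruno where the paper uses the summed composition estimate of \thref{Proposition 3.7} (from \cite{BDLIS15}), and you use the direct $L^1\!\to\! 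L^\infty$ mollification bound $\|\mathring R_\ell\|_{C_{t,x}^k}\lesssim\ell^{-3-k}\|\mathring R_q\|_{C_tL^1}$, which is in fact one power of $\ell$ tighter than the paper's $W^{4,1}\hookrightarrow L^\infty$ route giving $\ell^{-4-k}$ in \eqref{3.20} — either suffices for the stated $\ell^{-8-7N}$. The one place you stop short is exactly where the paper does real work: deriving the intermediate bounds $\|\rho\|_{C_{t,x}^k}$ (see \eqref{3.21}), $\|\rho^{1/2}\|_{C_{t,x}^k}$ (see \eqref{3.19}) and $\|\mathring R_\ell/\rho\|_{C_{t,x}^{N-k}}$ (Claims 3 and 4), where the paper repeatedly absorbs factors such as $\delta_{q+2}m^{1/2}\bar e$ into powers of $\ell^{-1}$ via \eqref{3.1b}, and identifies the $k=0$ term in the Leibniz sum as the worst case that produces $\ell^{-8-7N}$. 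You correctly flag this bookkeeping as the main obstacle; to turn the plan into a proof one would need to reproduce those claims (or the tighter analogues with $\ell^{-3-k}$) and the $(3.1b)$-absorption step.
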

\noindent The proofs of this Lemma and of \thref{Lemma 3.6} are collected in Section \ref{Section 3.4.1}.\\
\noindent
In view of \cite{HZZ19} one might heuristically think that it makes sense to define the amplitude function as $a_{(\xi)}(t,x,\omega):=a_{\xi,q+1}(t,x,\omega):=(2\pi)^{3/2} \rho^{1/2}(t,x,\omega)\gamma_\xi\left(\Id-\frac{\mathring{R}_\ell(t,x,\omega)}{\rho(t,x,\omega)}\right)$ with $\rho(t,x,\omega):=$ $2\Theta_\ell^{-1}\sqrt{\ell^2+\|\mathring{R}_\ell(t,x,\omega)\|_F^2}+\Theta_\ell^{-1}\eta_\ell(t)$ and $\eta_q(t,\omega):= \frac{1}{3 (2\pi)^3}\Big[e(t)(1-\delta_{q+2})-\|v_q(t)\|^2_{L^2}\Big]$. In this case the first statement of \thref{Theorem 1.2} remains true but there would appear several problems in order to deduce the energy equality later on.\\
The factor $\Theta^{-2}$ in the definition of $\eta_q$ is thereby essential to make use of \eqref{3.5}, so that we can derive practical bounds for amplitude functions later on (cf. e.g. \eqref{3.21}), whereas $\Theta_\ell$ in front of $\eta_\ell$ in the definition of $\rho$ is needed to get a suitable cancellation in the first term of \eqref{3.30}.

\subsubsection{Intermittent Jets} \label{Section 3.3.2.2}
Let us now proceed with the construction of the intermittent jets. To this end consider two smooth functions
\begin{align*}
\Phi \colon \mathbb{R}^2 \to \mathbb{R}, \quad  \quad \psi \colon \mathbb{R} \to \mathbb{R}
\end{align*}
with support in a ball of radius $1$ and center $0$, where $\Phi$ should solve the Poisson equation $\phi=-\Delta \Phi$. We require $\phi$ and $\psi$ to admit the normalizations
\begin{align}\label{3.9}
\frac{1}{4 \pi^2} \int_{B_1(0)} \phi^2(x_1,x_2) \,d(x_1,x_2)=1, \qquad \qquad \qquad \frac{1}{2\pi} \int_{B_1(0)} \psi^2(x_3) \,dx_3=1
\end{align}
and $\psi$ to have zero mean. Note that Green's identity implies $\int_{\mathbb{T}^2} \phi\,dx=\int_{\mathbb{T}^2}-\Delta \Phi \,dx=0$ as well. \\
Moreover, we define

\begin{align*}
\lambda:=\lambda_{q+1}, \qquad r_\perp:=\lambda_{q+1}^{-6/7}, \qquad r_\parallel:=\lambda_{q+1}^{-4/7}, \qquad \mu:=\frac{r_\parallel}{r_\perp}\lambda_{q+1}=\lambda_{q+1}^{9/7}, 
\end{align*}
so that we find by our choice of parameters (c.f. Section \ref{Section 3.1})
\begin{align*}
0<\lambda^{-1} \ll r_\perp \ll r_\parallel \ll 1,\quad \quad \quad \lambda r_\perp \in \mathbb{N},\quad \quad \quad \mu>0.
\end{align*}
Since the rescaled cut-off functions
\begin{align*}
\phi_{r_\perp}(x_1,x_2):=\frac{1}{r_\perp}\phi\left(\frac{x_1}{r_\perp}, \frac{x_2}{r_\perp} \right), \hspace{0.15cm} \Phi_{r_\perp}(x_1,x_2):=\frac{1}{r_\perp}\Phi\left(\frac{x_1}{r_\perp}, \frac{x_2}{r_\perp} \right), \hspace{0.15cm} \psi_{r_\parallel}(x_3):= \frac{1}{r_\parallel^{1/2}}\psi\left(\frac{x_3}{r_\parallel}\right)
\end{align*}
remain compactly supported, we will henceforth identify them with their $\mathbb{T}^2, \mathbb{T}^2$ and $\mathbb{T}$-periodic versions.\\
The vectors $\xi \in \Lambda$ in the geometric \thref{Lemma 3.2} are used to construct the building blocks \eqref{3.10} of our intermittent jets. Strictly speaking, let us select $\left(\alpha_\xi\right)_{\xi \in \Lambda} \subseteq \mathbb{R}^3$ in such a way that
\begin{align*}
\sqrt{|\alpha_{\xi}\cdot A_{\xi}-\alpha_{\xi^\prime}\cdot  A_{\xi^\prime}-2 \pi z_1|^2+|\alpha_{\xi}\cdot (\xi \times A_{\xi})-\alpha_{\xi^\prime}\cdot (\xi^\prime \times A_{\xi^\prime})-2 \pi z_2|^2}>\frac{2}{n_\ast \lambda} 
\end{align*}
for each $\xi,\xi^\prime \in \Lambda$ and $z_1,z_2 \in \mathbb{Z}$, which forces the families $\left(\phixi\right)_{\xi \in \Lambda}$ and $\left(\Phixi\right)_{\xi \in \Lambda}$ given by 
\begin{subequations}\label{3.10}
\begin{align} 
\phixi(x)&:=\phi_{\xi,r_\perp,\lambda}(x):= \phi_{r_\perp}(n_\ast r_\perp \lambda(x-\alpha_\xi)\cdot A_\xi,n_\ast r_\perp\lambda(x-\alpha_\xi)\cdot(\xi \times A_\xi)),\label{3.10a}\\
\Phixi(x)&:=\Phi_{\xi,r_\perp,\lambda}(x):= \Phi_{r_\perp}(n_\ast r_\perp \lambda(x-\alpha_\xi)\cdot A_\xi,n_\ast r_\perp\lambda(x-\alpha_\xi)\cdot(\xi \times A_\xi)),\label{3.10b}\\
\psixi(t,x)&:=\psi_{\xi,r_\perp,r_\parallel,\lambda,\mu}(t,x):=\psi_{r_\parallel}(n_\ast r_\perp \lambda (x\cdot \xi+\mu t)) \label{3.10c}
\end{align}
\end{subequations}
to have mutually disjoint support. Here we consider $\psixi$ at any time $t \in \mathbb{R}$. The vector $A_\xi \in \mathbb{S}^2 \cap \mathbb{Q}^3$ should be orthogonal to $\xi$, so that $\{\xi,A_\xi,\xi \times A_\xi\}\subseteq \mathbb{S}^2\cap \mathbb{Q}^3$ forms an orthonormal basis for $\mathbb{R}^3$ and $n_\ast\in \mathbb{N}$ denotes the least common multiple of the denominators of the rational numbers $\xi_i, (A_\xi)_i$ and $(\xi \times A_\xi)_i,\,  i=1,2,3$, in other words $\big\{n_\ast \xi, n_\ast A_\xi,n_\ast \big(\xi \times A_\xi \big)\big\}\subseteq \mathbb{Z}^3$.\\
With these preparations in hand, we introduce the intermittent jet
\begin{align*}
W_{(\xi)}(t,x):=W_{\xi,r_\perp,r_\parallel,\lambda, \mu}(t,x):=\xi \psixi(t,x)\phixi(x)
\end{align*}
and its incompressibility corrector
\begin{align} \label{3.11}
W_{(\xi)}^{(c)}(t,x):=\frac{1}{n_\ast^2 \lambda^2} \nabla \psixi(t,x) \times \curl{(\Phixi (t,x)\xi)}=\curl{\curl{V_{(\xi)}(t,x)}}-W_{(\xi)}(t,x),
\end{align}
where $V_{(\xi)}(t,x):=\frac{1}{n_\ast^2\lambda^2}\xi \psixi(t,x)\Phixi(x)$, so that $W_{(\xi)}+W_{(\xi)}^{(c)}$ becomes divergence free.
Their spatial support is then contained in some cylinders of radius $\frac{r _\parallel+r_\perp}{n_\ast r_\perp \lambda}$ and axis being the line passing through $(-\mu t, \alpha_\xi \cdot A_\xi, \alpha_\xi \cdot (\xi \times A_\xi))^T$ with direction $\xi$. More precisely one has
\begin{align*}
&\supp (\Wxi(t,\cdot))\subseteq B_\frac{r_\parallel+r_\perp}{n_\ast r_\perp \lambda}(0)+ 
\left(\begin{matrix}
-\mu t\\ \alpha_\xi \cdot A_\xi \\ \alpha_\xi \cdot (\xi \times A_\xi)
\end{matrix} \right)
+\{s \xi\}_{s \in \mathbb{R}} + 2\pi \mathbb{Z}^3
\end{align*}
at each time $t\in \mathbb{R}$.
So by possibly shifting $\alpha_{\xi^\prime}$ in direction of $A_\xi, \xi \times A_\xi$, \thref{Lemma A.2} guarantees that their supports are still disjoint for all distinct $\xi,\, \xi^\prime \in \Lambda$. As a consequence of the orthogonal directions of oscillations for the functions defined in \eqref{3.10}, we may deduce
\begin{lemma}  \thlabel{Lemma 3.4}
The building blocks $\psixi$ and $\phixi$ obey
\begin{align}
\|D^\alpha \partial_t^N \psi_{(\xi)}^n D^\beta  \phi_{(\xi)}^m\|_{CL^p}= (2\pi)^{-3/p}\|D^\alpha \partial_t^N \psi_{(\xi)}^n\|_{CL^p} \|D^\beta \phi_{(\xi)}^m\|_{L^p} \label{3.12}
\end{align}
for each $n,m,N \in \mathbb{N}_0$, $p \in [1,\infty)$ and all multi-indices $\alpha,\beta \in \mathbb{N}_0^3$.
\end{lemma}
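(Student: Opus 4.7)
The plan is to exploit the fact that the building blocks oscillate in mutually orthogonal directions: $\psi_{(\xi)}^n$ (together with all of its space-time derivatives) depends on the spatial variable $x$ only through the scalar $y_1 := x\cdot \xi$, while $\phi_{(\xi)}^m$ (together with all of its spatial derivatives) depends on $x$ only through the pair $(y_2,y_3):=(x\cdot A_\xi,\,x\cdot (\xi\times A_\xi))$. The chain rule preserves this structure under $D^\alpha$, $\partial_t^N$ and $D^\beta$, since these derivatives merely bring down constant factors proportional to components of $\xi,\mu,A_\xi$ and $\xi\times A_\xi$, without introducing any new variable of dependence.

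First I would perform the orthogonal change of coordinates $x\mapsto y=(y_1,y_2,y_3)$ associated with the orthonormal basis $\{\xi,A_\xi,\xi\times A_\xi\}\subseteq \mathbb{S}^2$. This transformation has Jacobian one. The key verification is that the integral over $\mathbb{T}^3$ in $x$ can be rewritten, by the $\mathbb{T}^3$-periodicity of $\psi_{(\xi)}^n$ and $\phi_{(\xi)}^m$, as the integral over $[0,2\pi]^3$ in $y$. This rests on the construction in Section \ref{Section 3.3.2.2}: the factor $n_\ast$ entering \eqref{3.10} is precisely the least common multiple of the denominators of the components of $\xi$, $A_\xi$ and $\xi\times A_\xi$, so that $\{n_\ast\xi,n_\ast A_\xi,n_\ast(\xi\times A_\xi)\}\subseteq \mathbb{Z}^3$; together with $\lambda r_\perp\in \mathbb{N}$ (Section \ref{Section 3.1}) this guarantees $2\pi$-periodicity of both factors in each of the variables $y_1,y_2,y_3$.

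Once the change of variables has been carried out, Fubini's theorem decouples the integral since the two factors depend on disjoint sets of the new coordinates:
\begin{align*}
\int_{\mathbb{T}^3} \bigl|D^\alpha \partial_t^N \psi_{(\xi)}^n(t,x)\bigr|^p \bigl|D^\beta \phi_{(\xi)}^m(x)\bigr|^p \,dx
&= \int_0^{2\pi}\!\!\bigl|D^\alpha \partial_t^N \psi_{(\xi)}^n(t,y_1)\bigr|^p dy_1 \int_{[0,2\pi]^2}\!\!\bigl|D^\beta \phi_{(\xi)}^m(y_2,y_3)\bigr|^p d(y_2,y_3).
\end{align*}
Applying the same periodicity and change-of-variables argument separately to each factor gives $\|D^\alpha \partial_t^N \psi_{(\xi)}^n(t,\cdot)\|_{L^p(\mathbb{T}^3)}^p=(2\pi)^2\int_0^{2\pi}|\cdots|^p\,dy_1$ and $\|D^\beta \phi_{(\xi)}^m\|_{L^p(\mathbb{T}^3)}^p=2\pi\int_{[0,2\pi]^2}|\cdots|^p\,d(y_2,y_3)$, which together yield the claimed identity with the factor $(2\pi)^{-3/p}$. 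Taking $p$-th roots at each fixed $t$ and then supremum in $t\in\mathbb{R}$ upgrades this fibrewise $L^p$-identity to the $CL^p$-identity in the statement, since both $(2\pi)^{-3/p}$ and $\|D^\beta \phi_{(\xi)}^m\|_{L^p}$ are $t$-independent.

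The main technical point is the bookkeeping around the orthogonal change of variables from $\mathbb{T}^3=[0,2\pi]^3$ to the rotated coordinate system: although the rotation itself is volume-preserving, the image of $[0,2\pi]^3$ is a rotated cube and not $[0,2\pi]^3$. The fact that the integral of the (suitably periodic) integrand over the two domains coincides uses crucially the lattice compatibility provided by $\{n_\ast\xi,n_\ast A_\xi,n_\ast(\xi\times A_\xi)\}\subseteq \mathbb{Z}^3$ and $\lambda r_\perp\in\mathbb{N}$, which can be made rigorous either by a direct Fourier-series argument or by appealing to the identification between $\mathbb{T}^3$-periodic functions and their periodic extensions on $\mathbb{R}^3$ (cf. \thref{Lemma A.1}). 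Once this is accepted, the tensor-product decomposition into the two orthogonal factors is routine.
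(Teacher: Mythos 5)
Your argument is correct and is precisely the standard one: the paper itself offers no written proof of \thref{Lemma 3.4}, merely asserting it as ``a consequence of the orthogonal directions of oscillations'' and implicitly deferring to \cite{BV19a}, \cite{BV19b}, and your change-of-variables plus Fubini computation is exactly the argument used there. You correctly flag the one genuinely delicate point, namely that the rotated cube $Q[0,2\pi]^3$ is not itself a fundamental domain for $2\pi\mathbb{Z}^3$, so that equality of the two integrals requires the Fourier-support/lattice-compatibility observation (that $n_\ast r_\perp\lambda\in\mathbb{N}$ and $\{n_\ast\xi, n_\ast A_\xi, n_\ast(\xi\times A_\xi)\}\subseteq\mathbb{Z}^3$ force both factors to have integer frequency support in both coordinate systems), and your sketch of how to close that gap is adequate.
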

\noindent as well as the following fundamental bounds
\begin{lemma} \thlabel{Lemma 3.5}
For any $N,M \in \mathbb{N}_0 $ and $p \in [1,\infty]$ it holds
\begin{subequations}\label{3.13}
\begin{align}
\sum_{|\alpha|\leq N}\|D^\alpha\partial_t^M\psixi\|_{CL^p} &\lesssim r_{\parallel}^{1/p-1/2}\left(\frac{r_\perp \lambda}{r_\parallel}\right)^N \left(\frac{r_\perp \lambda \mu}{r_\parallel }\right)^M\label{3.13a}\\
\sum_{|\alpha|\leq N}\|D^\alpha\phixi\|_{L^p}+\sum_{|\beta|\leq N}\|D^\beta\Phixi\|_{L^p}&\lesssim r_\perp^{2/p-1}\lambda^N \label{3.13b}\\
\sum_{|\alpha|\leq N} \|D^\alpha\partial_t^M W_{(\xi)}\|_{CL^p}+\frac{r_\parallel}{r_\perp}\sum_{|\beta|\leq N} \|D^\beta\partial_t^M W_{(\xi)}^{(c)}&\|_{CL^p}+\lambda^2 \sum_{|\gamma|\leq N} \|D^\gamma \partial_t^M V_{(\xi)}\|_{CL^p}\notag\\ &\lesssim r_\perp^{2/p-1}r_\parallel^{1/p-1/2}\lambda^N \left(\frac{r_\perp \lambda \mu}{r_\parallel}\right)^M\label{3.13c},
\end{align}
\end{subequations}
where the implicit constants merely depend on $M,\, N$ and $p$.\\
In the special case $N=M=0,\, p=2$ \thref{Lemma 3.4} and the normalizations in \eqref{3.9} even entail $\|\Wxi\|_{CL^2}=1$.
\end{lemma}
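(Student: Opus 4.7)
The plan is to reduce all three bounds to scaling estimates for the unrescaled profiles $\phi,\Phi,\psi$ via two ingredients: the chain rule applied to the rigid motions defining $\phi_{(\xi)},\Phi_{(\xi)},\psi_{(\xi)}$, and a change of variables on $\mathbb{T}^3$ exploiting that $\{n_\ast\xi,\,n_\ast A_\xi,\,n_\ast(\xi\times A_\xi)\}\subseteq\mathbb{Z}^3$ is an orthogonal basis and that $\lambda r_\perp\in\mathbb{N}$. I would first record the elementary one-variable bounds
\begin{align*}
\|D^k\phi_{r_\perp}\|_{L^p(\mathbb{R}^2)}\lesssim r_\perp^{2/p-1-k},\qquad \|D^k\Phi_{r_\perp}\|_{L^p(\mathbb{R}^2)}\lesssim r_\perp^{2/p-1-k},\qquad \|D^k\psi_{r_\parallel}\|_{L^p(\mathbb{R})}\lesssim r_\parallel^{1/p-1/2-k},
\end{align*}
which follow by the substitutions $y=x/r_\perp$ and $y=x_3/r_\parallel$ together with the compact support of the profiles.

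Second, I would pass to the periodized compositions. Since $\phi_{(\xi)}$ is $(L\mathbb{Z})^3/(2\pi\mathbb{Z})^3$-tiled along the $(A_\xi,\xi\times A_\xi)$-plane with $L=n_\ast r_\perp\lambda\in\mathbb{N}$ by construction, a linear change of variables to the orthonormal basis $\{\xi,A_\xi,\xi\times A_\xi\}$ followed by the scaling $y=n_\ast r_\perp\lambda\,\cdot$ yields
\begin{align*}
\|\phi_{(\xi)}\|_{L^p(\mathbb{T}^3)}\;\simeq\;(2\pi)\,(n_\ast r_\perp\lambda)^{-2}\,(n_\ast r_\perp\lambda)^{2}\|\phi_{r_\perp}\|_{L^p(\mathbb{R}^2)}\;\lesssim\;r_\perp^{2/p-1},
\end{align*}
and analogously for $\Phi_{(\xi)}$. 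Each spatial derivative of $\phi_{(\xi)}$ produces a factor $n_\ast r_\perp\lambda$ on the outside times a derivative of $\phi_{r_\perp}$ which contributes an additional $1/r_\perp$, so by induction $\|D^\alpha\phi_{(\xi)}\|_{L^p}\lesssim\lambda^{|\alpha|}r_\perp^{2/p-1}$, which gives \eqref{3.13b}. The same argument for $\psi_{(\xi)}(t,\cdot)$, integrating the two flat directions trivially, gives $\|\psi_{(\xi)}\|_{L^p(\mathbb{T}^3)}\lesssim r_\parallel^{1/p-1/2}$; each spatial derivative costs $n_\ast r_\perp\lambda/r_\parallel$ and each time derivative an additional factor $\mu$ from the chain rule applied to $x\cdot\xi+\mu t$, uniformly in $t$. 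This yields \eqref{3.13a}.

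Third, \eqref{3.13c} follows by combining the previous two via Leibniz's rule and \thref{Lemma 3.4}. Writing $W_{(\xi)}=\xi\,\psi_{(\xi)}\phi_{(\xi)}$ and distributing $D^\alpha\partial_t^M$ over the product, the factorization in \thref{Lemma 3.4} turns the $L^p$ norm of each summand into a product, to which \eqref{3.13a} and \eqref{3.13b} apply; summing the finitely many terms produces exactly $r_\perp^{2/p-1}r_\parallel^{1/p-1/2}\lambda^N(r_\perp\lambda\mu/r_\parallel)^M$. For $W_{(\xi)}^{(c)}$ I would use the first expression in \eqref{3.11}, which is a sum of terms of the schematic form $\lambda^{-2}(D\psi_{(\xi)})(D\Phi_{(\xi)})$: the derivative of $\psi_{(\xi)}$ contributes $r_\perp\lambda/r_\parallel$, the derivative of $\Phi_{(\xi)}$ contributes $\lambda$, and together with $\lambda^{-2}$ this gives the announced prefactor $r_\perp/r_\parallel$. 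The estimate for $V_{(\xi)}$ is analogous with an extra $\lambda^{-2}$. Finally, the special case $\|W_{(\xi)}\|_{CL^2}=1$ follows directly from \thref{Lemma 3.4} combined with the normalizations \eqref{3.9} after the above change of variables, since the Jacobian factors precisely cancel $(2\pi)^{-3/2}\cdot 2\pi\cdot 4\pi^2$.

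\textbf{Main obstacle.} The routine chain-rule bookkeeping is easy; the delicate point is justifying that the $L^p$ norm on $\mathbb{T}^3$ of the periodized, rescaled profile really inherits the scaling of the $L^p$ norm on $\mathbb{R}^2$ or $\mathbb{R}$. One has to verify that the image of $\mathbb{T}^3$ under the linear map defining $\phi_{(\xi)}$ covers the fundamental period of $\phi_{r_\perp}$ an integer number of times with unit Jacobian density, which is precisely where $n_\ast\xi,n_\ast A_\xi,n_\ast(\xi\times A_\xi)\in\mathbb{Z}^3$ and $\lambda r_\perp\in\mathbb{N}$ are essential; a careless bound would lose control of constants in $\xi$.
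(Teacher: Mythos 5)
The paper does not actually prove \thref{Lemma 3.5}; it is recalled (with \thref{Lemma 3.4}) from \cite{BV19a}, \cite{BV19b} without proof. Your proposal reconstructs the standard argument and is essentially the proof one finds there: one-variable scaling bounds for $\phi_{r_\perp},\Phi_{r_\perp},\psi_{r_\parallel}$, passage to $\mathbb{T}^3$ via the orthonormal frame $\{\xi,A_\xi,\xi\times A_\xi\}$ and the integrality of $n_\ast\xi,n_\ast A_\xi,n_\ast(\xi\times A_\xi)$ and $r_\perp\lambda$ so that the change of variables has unit Jacobian density on the torus, the chain-rule factors $n_\ast r_\perp\lambda/r_\parallel$ per spatial derivative of $\psixi$, $n_\ast r_\perp\lambda/r_\perp=n_\ast\lambda$ per derivative of $\phixi,\Phixi$, $\mu$ per time derivative, and finally \thref{Lemma 3.4} to factorize the $L^p$ norm in \eqref{3.13c}. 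You also correctly identify the genuinely delicate step (the tiling/Jacobian argument) which a naive rescaling argument on $\mathbb{R}^2$ would miss.

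One inaccuracy deserves a flag: in the final sentence you assert that the constants combine to $1$, but with the unnormalized $L^p$ convention implied by the $(2\pi)^{-3/p}$ prefactor in \thref{Lemma 3.4}, the change-of-variables computation actually gives $\|\psixi\|_{CL^2}=\|\phixi\|_{L^2}=(2\pi)^{3/2}$, whence $\|W_{(\xi)}\|_{CL^2}=(2\pi)^{-3/2}\cdot(2\pi)^{3/2}\cdot(2\pi)^{3/2}=(2\pi)^{3/2}$, not $1$. Your own displayed product $(2\pi)^{-3/2}\cdot2\pi\cdot4\pi^2$ already equals $(2\pi)^{3/2}$, so the claim that it ``cancels'' is an arithmetic slip. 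The equality $\|W_{(\xi)}\|_{CL^2}=1$ only holds if the torus $L^p$ norms are taken as averages, in which case the factor $(2\pi)^{-3/p}$ in \thref{Lemma 3.4} should be absent — this is a latent normalization tension in the source statement, not a failure of your method, but a careful proof should reconcile the two rather than assert cancellation.
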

\vspace{.5cm} \noindent
Based on this, we are now able to define the principle part
\begin{align*}
\wpr:= \Theta^{-1/2}_\ell\sum_{\xi \in \Lambda} \axi\Wxi,
\end{align*}
the temporal corrector
\begin{align*}
\wt:&= - \mu^{-1} \sum_{\xi \in \Lambda} \mathbb{P}\mathbb{P}_{\neq 0} \left(a^2_{(\xi)}  \psi^2_{(\xi)} \phi^2_{(\xi)} \xi \right),
\end{align*}
which will provide a better handling of the oscillation error later on, and the incompressibility corrector
\begin{align*}
\wc:&= \Theta^{-1/2}_\ell\sum_{\xi \in \Lambda} \Big( \curl \left(\nabla \axi \times \Vxi \right)+ \nabla \axi \times \curl \Vxi+\axi\Wcxi\Big),
\end{align*}
whose purpose is to ensure that $\wpr+\wc$ is divergence free and to have zero mean. In fact, the expression
\begin{align}\label{3.14}
\wpr+\wc= \Theta^{-1/2}_\ell\sum_{\xi \in \Lambda} \curl \curl \left(\axi \Vxi \right) 
\end{align}
can be easily verified by a direct computation, so that $\wpr+\wc$ is obviously divergence free and since $\axi\Vxi$ is a smooth function with periodic boundary conditions, it has additionally zero mean. Notably, these properties carry over to the total perturbation
\begin{align*}
w_{q+1}:=\wpr+\wc+\wt
\end{align*}
and we may bound each part of it as follows. 

\begin{lemma}\thlabel{Lemma 3.6}
At any time $t \in (-\infty,\tau]$, each component of the perturbation $w_{q+1}$ can be estimated
\begin{itemize}
\item[a)] in $C_tL^p$ for any $p \in (1,\infty)$ as
\begin{subequations}\label{3.15}
\begin{align}
&\|\wpr\|_{C_tL^p}\lesssim  \frac{M}{4|\Lambda|} \ell^{-8} \delta_{q+1}^{1/2} m \bar{e}^{1/2}  r_\perp^{2/p-1}r_\parallel^{1/p-1/2}, \label{3.15a}\\
&\|\wc\|_{C_tL^p}\lesssim \frac{M}{4|\Lambda|} \ell^{-22} \delta_{q+1}^{1/2}m\bar{e}^{1/2} r_\perp^{2/p} r_\parallel^{1/p-3/2},\label{3.15b}\\
&\|\wt\|_{C_tL^p}\lesssim \left(\frac{M}{4|\Lambda|}\right)^2\ell^{-16}\delta_{q+1}m^2\bar{e} r_\perp^{2/p-1}r_\parallel^{1/p-2} \lambda_{q+1}^{-1}. \label{3.15c}
\end{align}
In the specific case $p=2$ the principle part admits the stronger bound
\begin{align}
\|\wpr\|_{C_tL^2}\lesssim \frac{M}{4|\Lambda|} \delta_{q+1}^{1/2}m\bar{e}^{1/2} \label{3.15d}.
\end{align}
\end{subequations}

\item[b)] in $C_{t,x}^1$ as
\begin{subequations} \label{3.16}
\begin{align}
&\|\wpr\|_{C_{t,x}^1}\lesssim \frac{M}{4|\Lambda|}  \ell^{-16}\delta_{q+1}^{1/2}m\bar{e}^{1/2}r_\perp^{-1}r_\parallel^{-1/2}\lambda_{q+1}^2,\label{3.16a}\\
&\|\wc\|_{C_{t,x}^1}\lesssim  \frac{M}{4|\Lambda|}  \ell^{-30} \delta_{q+1}^{1/2}m\bar{e}^{1/2} r_\parallel^{-3/2}\lambda_{q+1}^2,\label{3.16b}\\
&\|\wt\|_{C_{t,x}^1}\lesssim \bigg(\frac{M}{4 |\Lambda|}\bigg)^2 \ell^{-30}\delta_{q+1}m^2\bar{e}r_\perp^{-1}r_\parallel^{-2} \lambda_{q+1}^2. \label{3.16c}
\end{align}
\end{subequations}

\item[c)] in $C_tW^{1,p}$ for any $p\in (1,\infty)$ as
\begin{subequations} \label{3.17}
\begin{align}
\|\wpr+\wc\|_{C_tW^{1,p}}&\lesssim\frac{M}{4|\Lambda|} \ell^{-29}\delta_{q+1}^{1/2} m\bar{e}^{1/2} r_\perp^{2/p-1}r_\parallel^{1/p-1/2}\lambda_{q+1}, \label{3.17a}\\
\|\wt\|_{C_tW^{1,p}} &\lesssim \left(\frac{M}{4|\Lambda|}\right)^2 \ell^{-23}\delta_{q+1}m^2\bar{e} r_\perp^{2/p-1} r_\parallel^{1/p-2}. \label{3.17b}
\end{align}
\end{subequations}
\end{itemize}
\end{lemma}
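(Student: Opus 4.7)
The approach is direct estimation from the explicit formulas for $w_{q+1}^{(p)}, w_{q+1}^{(c)}, w_{q+1}^{(t)}$. After the triangle inequality over $\xi \in \Lambda$, pull out the prefactor $\Theta_\ell^{-1/2}$, which is uniformly bounded up to the stopping time (since $|B_t| \leq L = 1$, one has $\Theta_\ell^{-1/2} \lesssim m^{1/8}$), and combine the amplitude bounds of Lemma 3.3 with the building-block bounds of Lemmas 3.4--3.5, applied at the appropriate orders of differentiation.

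For part (a), the $L^p$ estimate on $w_{q+1}^{(p)}$ follows by pulling $\|a_{(\xi)}\|_\infty$ out via (3.8b) with $N=0$ and combining with (3.13c) for $\|W_{(\xi)}\|_{C_tL^p}$; the estimate on $w_{q+1}^{(c)}$ is analogous after expanding the curls in the definition, paying one or two extra derivatives on $a_{(\xi)}$ (which produces the $\ell^{-22}$). For $w_{q+1}^{(t)}$, the $L^p$-boundedness of $\mathbb{P}$ and $\mathbb{P}_{\neq 0}$ (Lemma 2.3) reduces matters to estimating $\|a_{(\xi)}^2 \psi_{(\xi)}^2 \phi_{(\xi)}^2\|_{L^p}$; Lemma 3.4 with $n=m=2$ factors this into scalar $L^p$-norms, each bounded by (3.13a)--(3.13b) at exponent $2p$, and the prefactor $\mu^{-1}$ combines with the $r_\perp, r_\parallel$ powers to yield the stated $\lambda_{q+1}^{-1}$. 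For the sharp $L^2$ bound (3.15d), one uses an improved Hölder inequality that exploits separation of scales: since $a_{(\xi)}$ varies on scale $\ell$ while $W_{(\xi)}$ oscillates on the much finer scale $\lambda_{q+1}^{-1}$ by (3.1b), the leading term in $\|a_{(\xi)} W_{(\xi)}\|_{L^2}$ is $\|a_{(\xi)}\|_{L^2}\|W_{(\xi)}\|_{L^2} = \|a_{(\xi)}\|_{L^2}$, which is controlled by (3.8a).

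For part (b), differentiate once in $t$ or $x$ and distribute by Leibniz. The dominant contribution is when the derivative hits the building block: (3.13c) with $N=1$ gives a factor $\lambda_{q+1}$, and with $M=1$ gives $r_\perp \lambda_{q+1}\mu/r_\parallel$, which combined with the $L^\infty$-estimate $r_\perp^{-1} r_\parallel^{-1/2}$ produces the $\lambda_{q+1}^2$ prefactor of (3.16). Terms where the derivative lands on $a_{(\xi)}$ or on $\Theta_\ell^{-1/2}$ cost extra $\ell^{-7}$ or $\ell^{-1}$, both subdominant to $\lambda_{q+1}$ by (3.1b). For part (c), use the curl-curl representation (3.14) of $w_{q+1}^{(p)}+w_{q+1}^{(c)}$: a single spatial derivative yields $\nabla\,\curl\,\curl(a_{(\xi)}V_{(\xi)})$, which after Leibniz is estimated by (3.8b) for derivatives of $a_{(\xi)}$ and by the $V_{(\xi)}$-part of (3.13c), gaining the $\lambda^{-2}$ factor present in $V_{(\xi)}$ and losing one $\lambda_{q+1}$ from the extra derivative. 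For the temporal corrector in $W^{1,p}$, note that $\mathbb{P}, \mathbb{P}_{\neq 0}$ commute with $\partial_x$ (Lemma 2.1) and are $L^p$-bounded (Lemma 2.3), so one differentiates inside the projections and repeats the $L^p$ calculation of (a) with one extra factor of $\lambda_{q+1}$ from the oscillating product.

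The main obstacle is not any single conceptual step but the bookkeeping: aligning the many powers of $\ell, \lambda_{q+1}, r_\perp, r_\parallel, \mu, m, \bar{e}$ produced by Leibniz and verifying that subdominant cross-terms are absorbed into the dominant one via (3.1b), uniformly in $\xi \in \Lambda$, $p \in (1,\infty)$, and $t \leq \tau$. The only genuinely delicate point is the sharp $L^2$ bound (3.15d), where a naive Hölder would produce a spurious $\ell^{-8}$ incompatible with the claim; circumventing this requires the improved Hölder inequality that uses the slow variation of $a_{(\xi)}$ relative to the oscillation scale of $W_{(\xi)}$.
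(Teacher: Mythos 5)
Your approach is essentially the one the paper takes: triangle inequality over $\xi\in\Lambda$, pull out the uniformly bounded $\Theta_\ell^{-1/2}$, and combine the amplitude bounds of Lemma \ref{Lemma 3.3} with the building-block bounds of Lemma \ref{Lemma 3.5}. You also correctly identify that the sharp $L^2$ bound \eqref{3.15d} requires the decorrelation estimate (Lemma \ref{Lemma A.3}, i.e.\ Lemma~3.7 of \cite{BV19b}) rather than a naive H\"older, and your treatment of parts (a) and (c) matches the paper's.

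There is one genuine gap, in part (b) for the temporal corrector \eqref{3.16c}. You propose to estimate $\|w_{q+1}^{(t)}\|_{C^1_{t,x}}$ by differentiating once and distributing by Leibniz, implicitly bounding each resulting factor in $L^\infty$. But $w_{q+1}^{(t)}=-\mu^{-1}\sum_{\xi\in\Lambda}\mathbb{P}\mathbb{P}_{\neq 0}\bigl(a_{(\xi)}^2\psi_{(\xi)}^2\phi_{(\xi)}^2\xi\bigr)$ contains the Leray projection $\mathbb{P}$, a singular integral operator bounded on $L^p$ only for $1<p<\infty$ and not on $L^\infty$, so one cannot simply move $\mathbb{P}$ past an $L^\infty$-norm the way you do for the $L^p$ estimates in part (a). The paper resolves this by passing through the extension operator and Morrey's embedding $W^{1,p}(\mathbb{R}^3)\hookrightarrow C^{0,\gamma}(\mathbb{R}^3)$ for $p>3$: the $C^1_{t,x}$-norm of $w_{q+1}^{(t)}$ is first controlled by $C_tW^{1,p}$-norms of $w_{q+1}^{(t)}$ and its first $t$- and $x$-derivatives, and only then does the $W^{1,p}$-boundedness of $\mathbb{P}\mathbb{P}_{\neq 0}$ together with Lemma \ref{Lemma 2.1} (commutation with derivatives, justified by \eqref{3.25}) apply. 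Your part (c) already invokes exactly this boundedness, so the fix is small, but without it the argument for \eqref{3.16c} does not close. A minor additional imprecision: $w_{q+1}^{(t)}$ carries no $\Theta_\ell^{-1/2}$ prefactor, so the $\partial_t\Theta_\ell^{-1/2}$ contribution you mention in part (b) arises only for $w_{q+1}^{(p)}$ and $w_{q+1}^{(c)}$, and there it costs $\ell^{-1}m^{5/8}$ via \eqref{3.24}, not just $\ell^{-1}$.
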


\noindent The proof is postponed to Section \ref{Section 3.4.1}.\\
 It is worth mentioning that the factor $\Theta_\ell^{-1/2}$ in the principle part $\wpr$ of the perturbation is needed to establish \eqref{3.31}, which in turn is essential to deduce a handy expression of the oscillation error \eqref{3.43}. We also define the incompressibility corrector $\wc$ with the factor $\Theta_\ell^{-1/2}$ ahead, in order to guarantee \eqref{3.14}, whereas the temporal corrector $\wt$ does not contain it.

\boldmath
\section{Inductive Estimates for $v_{q+1}$}\label{Section 3.4}
\unboldmath
So far we have developed a sequence $\left(v_q\right)_{q \in \mathbb{N}_0}$, solving \eqref{3.2} on the level $q \in \mathbb{N}_0$ and with the corresponding Reynolds stress constructed below, also on the level $q+1$. As we will see in Section \ref{Section 3.4.4} this sequence converges in $C\left((-\infty,\tau];L^2\left(\mathbb{T}^3\right)\right)$, so that the limit function will be our desired weak solution to \eqref{1.2} on $[0,\tau]$. However, let us take one step after the other. We start by aiming $\left(v_q\right)_{q \in \mathbb{N}_0}$ to admit the bounds \eqref{3.3a} and \eqref{3.3b} on the level $q+1$.\\

\subsection{Preparations} \label{Section 3.4.1}
For this purpose we need to control the amplitude functions and intermittent jets, so that we go back to \thref{Lemma 3.3}, and \thref{Lemma 3.6}. The key ingredient of the proof of \thref{Lemma 3.3} is the ensuing result from \cite{BDLIS15}, p.163, where we set $D_t^\alpha:=\partial^{\alpha_1}_{x_1}\ldots\partial^{\alpha_n}_{x_n} \partial_t^{\alpha_{n+1}}$ for all multi-indices  $\alpha=(\alpha_1,\ldots,\alpha_{n+1})\in \mathbb{N}_0^{n+1}$.

\begin{prop}\thlabel{Proposition 3.7}
For any $k \in \mathbb{N}$ the composition of  $f \in C^\infty\left(\Ima(u); \mathbb{R} \right)$ and $u \in C^\infty \left( \mathbb{R}\times\mathbb{R}^n; \mathbb{R}^N\right)$  can be estimated as 
\setlength\jot{-0.4cm}
\begin{align*}
\sum_{0<|\alpha|\leq k}& \|D_t^\alpha f(u)\|_{C_tL^\infty} \\
& \hspace{-0.5cm}\lesssim \sum_{0<|\alpha|\leq k} \Bigg(\max_{|\beta|=1} \|D_t^\beta f \|_{C_tL^\infty} \max_{|\beta|=|\alpha|}\|D_t^\beta u\|_{C_tL^\infty}+\sum_{j=0}^{|\alpha|-1} \max_{|\beta|=j+1}\|D_t^{\beta}f \|_{C_tL^\infty} \max_{|\beta|=1} \|D_t^\beta u\|^{|\alpha|}_{C_tL^\infty}\Bigg). 
\end{align*}
\end{prop}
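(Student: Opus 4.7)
The result is the multi-variable Faà di Bruno composition estimate, and my plan is to derive it by expanding $D_t^\alpha(f(u))$ through the iterated chain rule, taking $L^\infty$ norms, and then grouping the resulting terms to match the two types of contributions on the right-hand side.

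Concretely, iterating the chain rule in the coordinates $(t, x_1, \ldots, x_n)$ yields the classical Faà di Bruno identity
\[
D_t^\alpha f(u) \;=\; \sum_{p=1}^{|\alpha|} \sum_{\substack{\gamma_1 + \cdots + \gamma_p = \alpha \\ |\gamma_j| \geq 1}} c^{\alpha}_{\gamma_1, \ldots, \gamma_p}\; f^{(p)}(u) \bigl[D_t^{\gamma_1}u, \ldots, D_t^{\gamma_p}u\bigr],
\]
where the inner sum runs over tuples of multi-indices summing componentwise to $\alpha$, and $c^{\alpha}_{\gamma_1,\ldots,\gamma_p}$ are universal combinatorial coefficients. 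Taking the $C_tL^\infty$ norm and invoking the triangle inequality together with submultiplicativity of $\|\cdot\|_{L^\infty}$ then produces the preliminary bound
\[
\|D_t^\alpha f(u)\|_{C_t L^\infty} \;\lesssim\; \sum_{p=1}^{|\alpha|} \max_{|\beta| = p} \|D_t^\beta f\|_{C_t L^\infty} \sum_{\substack{\gamma_1 + \cdots + \gamma_p = \alpha \\ |\gamma_j| \geq 1}} \prod_{j=1}^{p} \|D_t^{\gamma_j} u\|_{C_t L^\infty}.
\]

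The $p = 1$ contribution reduces to $\max_{|\beta|=1}\|D_t^\beta f\|_\infty \cdot \max_{|\beta|=|\alpha|}\|D_t^\beta u\|_\infty$, which is exactly the first term on the right of the claimed inequality. For $p \geq 2$, every multi-index satisfies $1 \leq |\gamma_j| < |\alpha|$ and the total order $|\alpha|$ is distributed among $p$ factors of intermediate order. The crucial step is to absorb the mixed product $\prod_j \|D_t^{\gamma_j} u\|_\infty$ into the single pure power $\max_{|\beta|=1}\|D_t^\beta u\|_\infty^{|\alpha|}$; I plan to execute this through a Gagliardo--Nirenberg-type interpolation that dominates each factor $\|D_t^{\gamma_j} u\|_\infty$ by a corresponding power of a first-order norm of $u$, with the dimensional constants (depending on $p$, $|\alpha|$ and possibly higher norms of $u$) absorbed into the $\lesssim$.

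Once that absorption has been performed, summing over the finite index set of admissible tuples $(\gamma_1, \ldots, \gamma_p)$ for each fixed $p \in \{2,\ldots,|\alpha|\}$ (relabelled as $j+1 = p$ with $j \in \{0,\ldots,|\alpha|-1\}$), and finally summing over all multi-indices $\alpha$ with $0 < |\alpha| \leq k$, reproduces precisely the second group of terms in the proposition. The principal obstacle is exactly the intermediate-$p$ interpolation just described: without it, mixed-order products would survive explicitly on the right-hand side and the clean dependence on a single first-order norm raised to the power $|\alpha|$ would be lost. Everything beyond this interpolation reduces to finite combinatorics on the number of admissible tuples.
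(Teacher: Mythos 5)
The paper does not prove this proposition itself; it cites it directly from \cite{BDLIS15} (p.\,163), so there is no in-text proof to compare against. Evaluating your proposal on its own merits, the skeleton is right — Faà di Bruno, take $L^\infty$ norms, the $p=1$ contribution $f'(u)\,D_t^\alpha u$ yields the first term — but the central absorption step for $p\ge 2$ as you describe it is incorrect, and the proposition would be false if proven that way.

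The problem is the claim that each mixed product $\prod_{j=1}^{p}\|D_t^{\gamma_j}u\|_{\infty}$ (with $\sum_j|\gamma_j|=|\alpha|$, $|\gamma_j|\ge 1$) can be dominated by $\max_{|\beta|=1}\|D_t^{\beta}u\|_\infty^{|\alpha|}$ alone. That would require an inequality of the form $\|D^j u\|_\infty\lesssim\|Du\|_\infty^{j}$, which fails in general: for $u(x)=\varepsilon\cos(x/\varepsilon)$ one has $\|Du\|_\infty\sim 1$ and $\|D^2u\|_\infty\sim 1/\varepsilon$, so already $\|D^2u\|\|Du\|\not\lesssim\|Du\|^3$. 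The actual Gagliardo--Nirenberg (Landau--Kolmogorov) interpolation that is available is
\begin{align*}
\|D^j u\|_\infty \lesssim \|Du\|_\infty^{\frac{|\alpha|-j}{|\alpha|-1}}\,\|D^{|\alpha|}u\|_\infty^{\frac{j-1}{|\alpha|-1}},\qquad 1\le j\le|\alpha|,
\end{align*}
and applying it to the $u$-factors yields $\prod_j\|D^{\gamma_j}u\|_\infty \lesssim \|Du\|_\infty^{\frac{|\alpha|(p-1)}{|\alpha|-1}}\|D^{|\alpha|}u\|_\infty^{\frac{|\alpha|-p}{|\alpha|-1}}$: a residual power of $\|D^{|\alpha|}u\|_\infty$ survives whenever $p<|\alpha|$, multiplied by $\|D^p f\|_\infty$ rather than $\|Df\|_\infty$, which is \emph{not} present on the right-hand side. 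Your remark that ``possibly higher norms of $u$'' can be ``absorbed into the $\lesssim$'' is the tell that something has gone wrong: the implicit constant in the proposition must be independent of $u$ and $f$, so this absorption is not legitimate.

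The missing idea is to interpolate the derivatives of $f$ \emph{as well as} those of $u$, i.e.\ use also $\|D^p f\|_\infty\lesssim\|Df\|_\infty^{\frac{|\alpha|-p}{|\alpha|-1}}\|D^{|\alpha|}f\|_\infty^{\frac{p-1}{|\alpha|-1}}$. Combining the two interpolations gives
\begin{align*}
\|D^p f\|_\infty\prod_{j}\|D^{\gamma_j}u\|_\infty
\lesssim \Bigl(\|Df\|_\infty\,\|D^{|\alpha|}u\|_\infty\Bigr)^{\frac{|\alpha|-p}{|\alpha|-1}}
\Bigl(\|D^{|\alpha|}f\|_\infty\,\|Du\|_\infty^{|\alpha|}\Bigr)^{\frac{p-1}{|\alpha|-1}},
\end{align*}
and then Young's inequality for products (the exponents sum to $1$) bounds this by $\|Df\|_\infty\|D^{|\alpha|}u\|_\infty + \|D^{|\alpha|}f\|_\infty\|Du\|_\infty^{|\alpha|}$, precisely the two kinds of terms in the proposition. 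Without interpolating $f$ and without the Young step, the $p\ge 2$ terms cannot be put in the required form, so the gap you flag yourself as the ``principal obstacle'' is genuinely not resolved by the strategy you describe.
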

 \,\\

\begin{proof}[Proof of \thref{Lemma 3.3}]~\\
\underline{\eqref{3.8a}:} Keeping in mind  that $\alpha>4\beta b^2>\frac{4}{3} \beta$ and $\bar{e}> 4$ imply
\begin{align}\label{3.18}
\ell  \leq \delta_{q+1}\bar{e},
\end{align} 
and taking \eqref{3.7}, \eqref{3.3c}, $\eta_q \geq 0$ and \eqref{3.5} into account, leads to 
\begin{align*}
\|a_{(\xi)}\|_{C_tL^2}& \lesssim \sup_{\substack{R \in \mathbb{R}_{\text{sym}}^{3\times 3}\\ \|R-\Id\|_F\leq 1/2}}|\gamma_\xi (R)| \, \|\rho\|_{C_tL^1}^{1/2}\\
&\lesssim \frac{M}{4|\Lambda|} \left( \delta_{q+1}\bar{e} +\|\mathring{R}_q\|_{C_tL^1} +m^{1/4} \|\eta_q\|_{C_t} \right)^{1/2}\\
&\lesssim \frac{M}{4|\Lambda|}\left(\delta_{q+1}\bar{e} + \delta_{q+2}m^{2/4} \bar{e}+m^{1/4} \|\Theta^{-2} e-\|v_q\|_{L^2}^2\|_{C_t}\right)^{1/2}\\
&\lesssim \frac{M}{4|\Lambda|}\left(\delta_{q+1}\bar{e} + \delta_{q+2}m^{2/4}\bar{e}+\delta_{q+1}m^{3/4}\bar{e}\right)^{1/2}\\
&\lesssim \frac{M}{4 |\Lambda|}\delta_{q+1}^{1/2}m^{3/8}\bar{e}^{1/2}.
\end{align*}
\noindent \underline{\eqref{3.8b}:} To estimate the $C_{t,x}^N$-norm of $a_{(\xi)}$, we make use of Leibniz's rule 
\begin{align*}
\|a_{(\xi)}\|_{C_{t,x}^N} &\lesssim \sum_{|\alpha|\leq N} \sum_{\beta \leq \alpha} \Big\|D_t^\beta\big( \rho ^{1/2}\big)\Big\|_{C_tL^\infty} \bigg\|D_t^{\alpha-\beta} \gamma_\xi \bigg( \Id- \frac{\mathring{R}_\ell}{\rho}\bigg)\bigg\|_{C_tL^\infty}\\
&\lesssim \sum_{k\leq N} \big\|\rho ^{1/2}\big\|_{C^k_{t,x}} \bigg\|\gamma_\xi \bigg( \Id- \frac{\mathring{R}_\ell}{\rho}\bigg)\bigg\|_{C_{t,x}^{N-k}}.
\end{align*}
Let us proceed with a bound for $\big\|\rho ^{1/2}\big\|_{C_{t,x}^k}$. We will verify

\begin{align} \label{3.19}
\big\|\rho^{1/2}\big\|_{C_{t,x}^k} \lesssim \left\{\begin{array}{ll} \ell^{-2} \delta^{1/2}_{q+1}m^{3/8}\bar{e}^{1/2}, & \text{if } k=0, \\
         \ell^{1-7k}\delta_{q+1}^{1/2}m^{3/8}\bar{e}^{1/2}, & \text{if } k>0. \end{array}\right. 
\end{align}

\noindent For this purpose we need
\begin{adjustwidth}{20pt}{20pt}
\begin{flalign}\label{3.20}
\text{\underline{1.Claim:}}\ \  &\|\mathring{R}_\ell\|_{C_{t,x}^k} \lesssim \ell^{-4-k} \delta_{q+2}m^{1/2} \bar{e}& 
\end{flalign}
\begin{proof}
Owing to the embedding $W^{4,1}\subseteq L^\infty$ (cf. \cite{Ev10}, p.284, Theorem 6), Fubini's theorem and \eqref{3.3c} we may deduce 
\begin{align*}
\|\mathring{R}_\ell\|_{C_{t,x}^k} &\lesssim \sum_{n+|\alpha| \leq k} \|\partial_t^n D^\alpha \mathring{R}_\ell\|_{C_tW^{4,1}}\\
& \lesssim \sum_{\substack{n+|\alpha| \leq k\\ |\beta| \leq 4}}\Big\|\Big( \mathring{R}_q \ast_t \partial_t^n \varphi_\ell \Big) \ast_x D^{\alpha+\beta} \phi_\ell\Big\|_{C_tL^1}\\
& \lesssim \sum_{\substack{n+|\alpha| \leq k\\ |\beta| \leq 4}} \sup_{s \in (-\infty,t]} \int_{\mathbb{T}^3} \int_0^\ell \int_{|y|\leq \ell} |\mathring{R}_q(s-u,x-y)| \ell^{-n} \frac{1}{\ell} \varphi^{(n)} \left( \frac{u}{\ell} \right)\\
&\hspace{6.4cm}\cdot \ell^{-(|\alpha|+|\beta|)} \frac{1}{\ell^3} D_{\frac{y}{\ell}}^{\alpha+\beta} \phi \left(\frac{y}{\ell}\right) \,dy \,du \,dx\\
&\lesssim  \ell^{-4-k}  \|\mathring{R}_q\|_{C_tL^1} \sum_{\substack{n+|\alpha| \leq k\\ |\beta| \leq 4}}  \int_0^1  \varphi^{(n)} \left( u \right)\,du \int_{|y|\leq 1} D^{\alpha+\beta} \phi(y) \,dy \\
&\lesssim \ell^{-4-k} \delta_{q+2} m^{2/4}\bar{e}.
\end{align*}
\end{proof}
\end{adjustwidth}
\vspace{0.5cm}
to derive
\begin{adjustwidth}{20pt}{20pt}
\begin{flalign} \label{3.21}
\text{\underline{2.Claim:}}\ \  &\|\rho\|_{C_{t,x}^k} \lesssim \left\{\begin{array}{ll} \ell^{-4} \delta_{q+1}m^{3/4}\bar{e}, & \text{if } k=0, \\
         \ell^{2-7k}\delta_{q+1}m^{3/4}\bar{e}, & \text{if } k>0.\end{array}\right. &
\end{flalign}
\begin{proof}
Combining \eqref{3.18}, \eqref{3.20} and $\eta_q\geq 0$ with \eqref{3.5}, results in
\begin{align*}
\|\rho\|_{C_{t,x}^0} \lesssim \delta_{q+1}\bar{e}+\ell^{-4}\delta_{q+2}m^{1/2}\bar{e} +m^{1/4}\big\|\Theta^{-2} e-\|v_q\|_{L^2}^2 \big\|_{C_t}\lesssim \ell^{-4}\delta_{q+1} m^{3/4}\bar{e}.
\end{align*}
For $k>0$ we introduce the smooth function
\begin{align*}
f \colon \mathbb{R} \to \mathbb{R}, \quad f(z):= \sqrt{\ell^2+z^2},
\end{align*}
satisfying 
\begin{align}
|f^{(k)}(z)| \lesssim \ell^{-k+1}. \label{3.22}
\end{align}
Keeping \eqref{3.20} and \eqref{3.1b} in mind, \thref{Proposition 3.7} teaches us
\begin{align*}
\sum_{0<|\alpha|\leq k}\Big\|D_t^\alpha\Big( \sqrt{\ell^2+\|\mathring{R}_\ell\|_F^2}\Big)\Big\|_{C_tL^\infty} \lesssim \ell^{2-7k} \delta_{q+2}m^{1/2}\bar{e}.
\end{align*}
Moreover, we use Leibniz's rule together with the fact $\eta_q \geq 0$ and \eqref{3.5} to calculate 
\begin{align*}
&\sum_{0<|\alpha|\leq k} \|D_t^\alpha (\Theta_\ell \eta_\ell)\|_{C_tL^\infty} \\
&\hspace{.3cm}\lesssim \sum_{0<n\leq k} \sum_{j\leq n}\|\partial_t^{n-j} \Theta_\ell\|_{C_t}\|\partial_t^j\eta_\ell\|_{C_t}\\
&\hspace{.3cm}\lesssim \sum_{0<n\leq k}  \sum_{j\leq n}m^{1/4}\ell^{-n+j+i} \int_0^1 \varphi^{(n-j)}(u)\,du\,   \|\eta_q\|_{C_t} \ell^{-j} \int_0^1 \varphi^{(j)}(u)\,du \\
&\hspace{.3cm}\lesssim \ell^{-k}m^{1/4} \|\Theta^{-2}e-\|v_q\|^2_{L^2}\|_{C_t}\\
&\hspace{.3cm}\lesssim \ell^{-k} \delta_{q+1} m^{3/4}\bar{e}.
\end{align*}
As a result of these three bounds
\begin{align*}
\|\rho\|_{C_{t,x}^k} &\lesssim \|\rho\|_{C_{t,x}^0}+\sum_{0<|\alpha|\leq k}\Big\|D_t^\alpha \Big(\sqrt{\ell^2+\|\mathring{R}_\ell\|^2}\Big)\Big\|_{C_tL^\infty} +\sum_{0<|\alpha|\leq k} \|D_t^\alpha (\Theta_\ell\eta_\ell)\|_{C_tL^\infty}\\
&\lesssim \ell^{2-7k}\delta_{q+1}m^{3/4}\bar{e}.
\end{align*}
\end{proof}
\end{adjustwidth}
~\\
In order to find a bound for $\|\rho^{1/2}\|_{C_{t,x}^k}$ we intend to make use of \thref{Proposition 3.7} again. This time, however, applied to the function
\begin{align*}
\widetilde{f} \colon \Ima (\rho) \to \mathbb{R}, \quad \widetilde{f}(z)=z^{1/2}.
\end{align*}
and $\rho$. Taking into account that $\rho \geq  \ell$ entails $|\widetilde{f}^{(k)}(z)|\lesssim |z|^{1/2-k}\lesssim \ell^{1/2-k}$, we deduce from \eqref{3.21} and \eqref{3.1b} that
\begin{align*}
\sum_{0<|\alpha|\leq k}\|D_t^\alpha \big( \rho^{1/2}\big)\|_{C_tL^\infty} &\lesssim \ell^{-1/2} \ell^{2-7k} \delta_{q+1} m^{3/4}\bar{e}+\ell^{1/2-k}\left(\ell^{2-7} \delta_{q+1}m^{3/4}\bar{e}\right)^k\\
&\lesssim \ell^{1-7k}\delta_{q+1}^{1/2} \bar{e}^{1/2}
\end{align*}
holds, provided $k>0$. As a consequence and accordingly \eqref{3.21}
\begin{align*}
\|\rho^{1/2}\|_{C_{t,x}^k} \lesssim \left(\ell^{-4}\delta_{q+1}m^{3/4}\bar{e}\right)^{1/2}+\ell^{1-7k}\delta_{q+1}^{1/2} \bar{e}^{1/2}\lesssim \ell^{1-7k}\delta_{q+1}^{1/2} m^{3/8}\bar{e}^{1/2}.
\end{align*}
~\\
Let us now have a closer look at $\Big\|\gamma_\xi \left( \Id- \frac{\mathring{R}_\ell}{\rho}\right) \Big\|_{C_{t,x}^{N-k}}$. Our aim is to verify
\begin{align} \label{3.23}
\Big\|\gamma_\xi \left( \Id- \frac{\mathring{R}_\ell}{\rho}\right) \Big\|_{C_{t,x}^{N-k}} \lesssim \left\{\begin{array}{ll} \frac{M}{4|\Lambda|}, & \text{if } k=N, \\
\frac{M}{4|\Lambda|}\ell^{-6-7(N-k)}  , & \text{if } 0\leq k<N.\end{array}\right. 
\end{align}
~\\
The case $k=N$ is trivial, whereas \thref{Proposition 3.7} and \eqref{3.7} again imply
\begin{align*}
& \Big\|\gamma_\xi \left( \Id- \frac{\mathring{R}_\ell}{\rho}\right) \Big\|_{C_{t,x}^{N-k}}-\Big\|\gamma_\xi \left( \Id- \frac{\mathring{R}_\ell}{\rho}\right) \Big\|_{C_tL^\infty}\\
&\hspace{0.5cm}\lesssim \sum_{0<|\alpha|\leq N-k} \frac{M}{8 |\Lambda|(1+8 \pi^3)^{1/2}} \left(\|\Id\|_{C_{t,x}^{N-k}}+ \bigg\|\frac{\mathring{R}_\ell}{\rho} \bigg\|_{C^{N-k}_{t,x}} +\|\Id\|_{C_{t,x}^1}^{|\alpha|}+\bigg\|\frac{\mathring{R}_\ell}{\rho} \bigg\|^{|\alpha|}_{C^1_{t,x}} \right)
\end{align*}
and we assert
\begin{adjustwidth}{20pt}{20pt}
\begin{flalign*} 
\text{\underline{3.Claim:}}\ \  &\bigg\| \frac{\mathring{R}_\ell}{\rho}\bigg\|_{C_{t,x}^{N-k}} \lesssim  \ell^{-6-7(N-k)}.&
\end{flalign*}
\begin{proof}
Thanks to Leibniz's formula
\begin{align*}
\bigg\| \frac{\mathring{R}_\ell}{\rho}\bigg\|_{C_{t,x}^{N-k}} &\lesssim \sum_{|\alpha|\leq N-k} \sum_{\beta\leq \alpha} \|D_t^\beta \mathring{R}_\ell\|_{C_tL^\infty} \bigg\|D_t^{\alpha-\beta}\frac{1}{\rho}\bigg\|_{C_tL^\infty}\lesssim \sum_{j\leq N-k} \|\mathring{R}_\ell\|_{C_{t,x}^{j}} \bigg\|\frac{1}{\rho} \bigg\|_{C_{t,x}^{N-k-j}}. 
\end{align*}
Applying \thref{Proposition 3.7} to the functions
\begin{align*}
\hat{f} \colon \Ima(\rho) \to \mathbb{R}, \quad \hat{f}(z)=\frac{1}{z}, \quad |\hat{f}^{(k)}(z)|\lesssim |z|^{-1-k}\lesssim \ell^{-1-k}
\end{align*}
and $\rho$ yields according to \eqref{3.21} and \eqref{3.1b}
\begin{align*}
\sum_{0<|\alpha|\leq N-k-j} \bigg\| D_t^\alpha \frac{1}{\rho}\bigg\|_{C_tL^\infty}&\lesssim \ell^{-7(N-k-j)}\delta_{q+1}m^{3/4} \bar{e}+\ell^{-1-6(N-k-j)} \delta_{q+1}m^{\frac{3(N-k-j)}{4}}\bar{e}^{N-k-j}\\
& \lesssim \ell^{-1-7(N-k-j)}.
\end{align*}
Consequently
\begin{align*}
\bigg\| \frac{\mathring{R}_\ell}{\rho}\bigg\|_{C_{t,x}^{N-k}}&  \lesssim \sum_{j\leq N-k-1} \|\mathring{R}_\ell\|_{C_{t,x}^{j}} \left(\bigg\|\frac{1}{\rho}\bigg\|_{C_tL^\infty}+\sum_{0<|\alpha|\leq N-k-j} \bigg\| D_t^\alpha \frac{1}{\rho}\bigg\|_{C_tL^\infty} \right)\\
&\hspace{1.7cm}+\|\mathring{R}_\ell\|_{C_{t,x}^{N-k}} \bigg\|\frac{1}{\rho}\bigg\|_{C_{t,x}^0}\\
& \lesssim \sum_{j\leq N-k-1} \ell^{-4-j}\delta_{q+2}m^{1/2}\bar{e}\left(\ell^{-1}+ \ell^{-1-7(N-k-j)}\right)\\
&\hspace{1.7cm}+\ell^{-4-(N-k)}\delta_{q+2}m^{1/2}\bar{e}\ell^{-1}\\
&\lesssim \ell^{-6-7(N-k)}.
\end{align*}
 The penultimate step additionally follows from \eqref{3.20} and $\rho \geq \ell$, whereas the last step also holds due to \eqref{3.1b}.\\
 \end{proof}
 
 \noindent If $k=N-1$ we obtain a stronger bound
 \begin{flalign*} \
\text{\underline{4.Claim:}}\ \  &\bigg\| \frac{\mathring{R}_\ell}{\rho}\bigg\|_{C_{t,x}^1} \lesssim \ell^{-7}.&
\end{flalign*}
\begin{proof}
Remembering that $\rho \geq  \ell$ and $\rho \geq \|\mathring{R}_\ell\|_F $ holds and taking \eqref{3.20}, \eqref{3.21} and \eqref{3.1b} into account we compute
\begin{align*}
\bigg\| \frac{\mathring{R}_\ell}{\rho}\bigg\|_{C_{t,x}^1}&\lesssim \bigg\| \frac{\mathring{R}_\ell}{\rho}\bigg\|_{C_tL^\infty}\hspace{-0.5cm}+\bigg\|\frac{\partial_t \mathring{R}_\ell}{\rho}\bigg\|_{C_tL^\infty}\hspace{-0.5cm}+ \bigg\|\frac{\mathring{R}_\ell\partial_t \rho}{\rho^2}\bigg\|_{C_tL^\infty}+ \sum_{k=1}^3 \left(\bigg\| \frac{\partial_{x_k}\mathring{R}_\ell}{\rho}\bigg\|_{C_tL^\infty}\hspace{-0.5cm}+\bigg\|\frac{\mathring{R}_\ell\partial_{x_k}\rho}{\rho^2}\bigg\|_{C_tL^\infty}\right) \\
&\lesssim \ell^{-1} \left( \|\mathring{R}_\ell\|_{C_tL^\infty}+\|\partial_t \mathring{R}_\ell\|_{C_tL^\infty}+\sum_{k=1}^3 \|\partial_{x_k}\mathring{R}_\ell\|_{C_tL^\infty} \right)\\
& \hspace{.5cm}+ \ell^{-1} \left(\|\partial_t \rho\|_{C_tL^\infty}+\sum_{k=1}^3 \|\partial_{x_k}\rho\|_{C_tL^\infty} \right)\\
& \lesssim \ell^{-1} \left(\ell^{-4-1} \delta_{q+2} m^{1/2}\bar{e} +\ell^{2-7} \delta_{q+1}m^{3/4}\bar{e}\right)\\
&\lesssim \ell^{-7}.
\end{align*}
\end{proof}
\end{adjustwidth}
\vspace{0.5cm}
As a result 
\begin{align*}
\Big\|\gamma_\xi \bigg( \Id- \frac{\mathring{R}_\ell}{\rho}\bigg) \Big\|_{C_{t,x}^{N-k}}&\lesssim  \sum_{0<|\alpha|\leq N-k} \frac{M}{8|\Lambda|(1+8\pi^3)^{1/2}}\left(1+\ell^{-6-7(N-k)}+1+\ell^{-7|\alpha|}\right)\\
&\hspace{.5cm} +\Big\|\gamma_\xi \left( \Id- \frac{\mathring{R}_\ell}{\rho}\right) \Big\|_{C_tL^\infty}\\
&\lesssim \frac{M}{4|\Lambda|}\ell^{-6-7(N-k)}.
\end{align*}
Altogether we therefore bound \eqref{3.8b} as
\begin{align*}
\|a_{(\xi)}\|_{C_{t,x}^N}&\lesssim \|\rho ^{1/2}\|_{C^0_{t,x}} \bigg\|\gamma_\xi \bigg( \Id- \frac{\mathring{R}_\ell}{\rho}\bigg)\bigg\|_{C_{t,x}^N}+\sum_{k=1}^{N-1} \|\rho ^{1/2}\|_{C^k_{t,x}} \bigg\|\gamma_\xi \bigg( \Id- \frac{\mathring{R}_\ell}{\rho}\bigg)\bigg\|_{C_{t,x}^{N-k}}\\
&\hspace{.5cm} +\|\rho ^{1/2}\|_{C^N_{t,x}} \bigg\|\gamma_\xi \bigg( \Id- \frac{\mathring{R}_\ell}{\rho}\bigg)\bigg\|_{C_{t,x}^0}\\
&\lesssim \frac{M}{4 |\Lambda|}\delta_{q+1}^{1/2}m^{3/8}\bar{e}^{1/2}\left(\ell^{-8-7N}+\ell^{-5-7N}+\ell^{1-7N} \right)\\
&\lesssim \frac{M}{4|\Lambda|}\ell^{-8-7N} \delta_{q+1}^{1/2}m^{3/8}\bar{e}^{1/2},
\end{align*}
provided $N>0$. However, the final bound is due to \eqref{3.7} and \eqref{3.19} even valid for $N=0$. The proof of \thref{Lemma 3.3} is therefore complete.
\end{proof}

\begin{proof}[Proof of \thref{Lemma 3.6}]~\\
\underline{\eqref{3.15a}:} follows readily from \eqref{3.8b} and \eqref{3.13c}.\\
\underline{\eqref{3.15b}:} Thanks to \eqref{3.8b} and \eqref{3.13c} again, we obtain
\begin{align*}
\|\wc\|_{C_tL^p} &\lesssim m^{1/8}\sum_{\xi \in \Lambda} \bigg\{ \sum_{\ell ij=1}^3 \Big\| \partial_{x_\ell} \partial_{x_i} \axi \Vxi^j\Big\|_{C_tL^p}+  \sum_{\ell ij=1}^3\|\partial_{x_i} \axi \partial_{x_\ell} \Vxi^j\|_{C_tL^p} \bigg\}\\
& \hspace{.3cm} +m^{1/8}\sum_{\xi \in \Lambda} \bigg\{ \sum_{\ell ij=1}^3\Big\|\partial_{x_\ell} \axi \partial_{ x_i} \Vxi^j \Big\|_{C_tL^p}+   \Big\|\axi \Wcxi \Big\|_{C_tL^p} \bigg\} \\
&\lesssim m^{1/8}\sum_{\xi \in \Lambda} \bigg\{ \|\axi\|_{C^2_{t,x}}\|\Vxi\|_{C_tL^p} +\|\axi\|_{C^1_{t,x}}\sum_{|\gamma|\leq 1 } \| D^\gamma \Vxi\|_{C_tL^p}\bigg\}\\
&\hspace{0.3cm}+m^{1/8}\sum_{\xi \in \Lambda}\|\axi\|_{C^0_{t,x}}\|\Wcxi\|_{C_tL^p}\\
&\lesssim \frac{M}{4 |\Lambda|} \ell^{-22} \delta_{q+1}^{1/2} m \bar{e}^{1/2} r_\perp^{2/p}r_\parallel^{1/p-3/2}.
\end{align*}

\noindent
\underline{\eqref{3.15c}:}
Remembering that $\mathbb{P}\mathbb{P}_{\neq 0}$ is bounded on $L^p$ and keeping \eqref{3.12}, \eqref{3.8b}, \eqref{3.13a} and \eqref{3.13b} in mind we may compute
\begin{align*}
\|\wt\|_{C_tL^p} 
&\lesssim \mu^{-1}   \sum_{\xi \in \Lambda} \|\axi^2\|_{C^0_{t,x}} \|\psi_{(\xi)}^2 \phi_{(\xi)}^2\|_{C_tL^p}\\
&\lesssim \mu^{-1}   \sum_{\xi \in \Lambda} \|\axi\|_{C_{t,x}^0}^2 \|\psi_{(\xi)}\|^2_{C_tL^{2p}} \|\phi_{(\xi)}\|^2_{L^{2p}}\\
&\lesssim \left( \frac{M}{4 |\Lambda|}\right)^2 \ell^{-16} \delta_{q+1} m^2 \bar{e} r_\perp^{2/p-1} r_\parallel^{1/p-2} \lambda_{q+1}^{-1}.
\end{align*}
\noindent
\underline{\eqref{3.15d}:} Moreover, if $p=2$, we obtain according to \eqref{3.8a} and \eqref{3.8b}
\begin{align*}
\sum_{|\alpha|\leq j}\| D^\alpha \axi\|_{C_tL^2} \lesssim  \frac{M}{4|\Lambda|} \delta_{q+1}^{1/2}m^{3/8}\bar{e}^{1/2}\ell^{-15j}
\end{align*}
for all $j\geq 0$. Choosing $N\in \mathbb{N}$ in such a way that $N \geq \frac{60\ln(\ell)-\ln(16)-12}{\ln(12\pi)+3-\ln(n_\ast r_\perp \lambda_{q+1})-15 \ln(\ell)}$ holds, ensures
\begin{align*}
16 \exp(12)\ell^{-60} \left( 3\frac{2 \pi}{n_\ast r_\perp\lambda_{q+1}} \,2\exp(3)\ell^{-15} \right)^N \leq 1,
\end{align*}
whereas $a\geq 3600,\, b\geq 7,\, 161\alpha <\frac{1}{7 }$ and \eqref{3.1b} imply
\begin{align*}
3\frac{2 \pi }{n_\ast r_\perp\lambda_{q+1}} \, \ell^{-15}\leq \frac{1}{41}.
\end{align*}
Here we need in particular that $a\geq 3600$. Alternatively one could also chose a smaller $a$ but then we have to increase $b\in 7\mathbb{N}$.\\
So all requirements of Lemma 3.7 from \cite{BV19b}, recalled in Appendix \ref{Appendix A.1}, \thref{Lemma A.3}, are satisfied. Invoking additionally \eqref{3.13c} entails
\begin{align*}
\|\wpr\|_{C_tL^2} \lesssim m^{1/8} \frac{M}{4|\Lambda|} \delta_{q+1}^{1/2}m^{3/8}\bar{e}^{1/2}\sum_{\xi \in \Lambda} \| \Wxi\|_{C_tL^2}\lesssim \frac{M}{4|\Lambda|} \delta_{q+1}^{1/2}m\bar{e}^{1/2}.
\end{align*}
 b)\underline{\eqref{3.16a}:} follows from \eqref{3.8b}, \eqref{3.13c} and 
 \begin{align}
 \|\partial_t \Theta_\ell^{-1/2}\|_{C_t}\lesssim \|\partial_t \Theta_\ell\|_{C_t}\| \Theta_\ell^{-3/2}\|_{C_t}\lesssim \ell^{-1}m^{1/4}m^{3/8}. \label{3.24}
 \end{align} 
 Namely, 
 \begin{align*}
 \|\wpr\|_{C_{t,x}^1}
 &\lesssim \|\Theta_\ell^{-1/2}\|_{C_t}\|\Theta_\ell^{1/2}\wpr\|_{C_tL^\infty}+\|\partial_t\Theta_\ell^{-1/2}\|_{C_t}\|\Theta_\ell^{1/2}\wpr\|_{C_tL^\infty}\\
 &\hspace{.5cm}+\|\Theta_\ell^{-1/2}\|_{C_t}\|\partial_t (\Theta_\ell^{1/2}\wpr)\|_{C_tL^\infty}+\|\Theta_\ell^{-1/2}\|_{C_t}\sum_{|\alpha|= 1}\|\Theta_\ell^{1/2}D^\alpha\wpr\|_{C_tL^\infty} \\
 &\lesssim\ell^{-1}m^{5/8} \sum_{\xi \in \Lambda} \bigg( \|\axi\|_{C_tL^\infty}\|\Wxi\|_{C_tL^\infty}+ \|\partial_t \axi\|_{C_tL^\infty} \|\Wxi\|_{C_tL^\infty}\bigg)\\
 &\hspace{0.5cm}+\ell^{-1}m^{5/8}\sum_{\xi \in \Lambda}\bigg(\|\axi\|_{C_tL^\infty}\|\partial_t \Wxi\|_{C_tL^\infty}+\sum_{|\alpha|=1}\|D^\alpha\axi\|_{C_tL^\infty}\|\Wxi\|_{C_tL^\infty}\bigg)\\
&\hspace{0.5cm}+\ell^{-1}m^{5/8}\sum_{\xi \in \Lambda}\sum_{|\alpha|=1}\|\axi\|_{C_tL^\infty}\|D^\alpha\Wxi\|_{C_tL^\infty}  \\
 &\lesssim \ell^{-1}m^{5/8}\sum_{\xi \in \Lambda} \|\axi\|_{C_{t,x}^0} \bigg(\|\Wxi\|_{C_tL^\infty}+\|\partial_t\Wxi\|_{C_tL^\infty}+\sum_{|\alpha|=1} \|D^\alpha\Wxi\|_{C_tL^\infty} \bigg)\\
 &\hspace{0.5cm}+\ell^{-1}m^{5/8}\sum_{\xi \in \Lambda}\|\axi\|_{C_{t,x}^1}\|\Wxi\|_{C_tL^\infty} \\
 &\lesssim \frac{M}{4 |\Lambda|}  \ell^{-16}\delta_{q+1}^{1/2} m\bar{e}^{1/2} r_\perp^{-1}r_\parallel^{-1/2}\lambda_{q+1}^2.
  \end{align*}
\underline{\eqref{3.16b}:} We will estimate each involved term of
\begin{align*}
\|\wc\|_{C_{t,x}^1} \overset{\eqref{3.24}}{\leq} \ell^{-1}m^{5/8} \sum_{\xi \in \Lambda}\Big(\|\curl (\nabla \axi \times \Vxi)\|_{C_{t,x}^1} +\|\nabla \axi \times \curl \Vxi\|_{C_{t,x}^1}+\|\axi \Wcxi\|_{C_{t,x}^1}\Big)
\end{align*}
by using \eqref{3.8b} and \eqref{3.13c} in order of their appearance.\\
Firstly, by using the Levi-Civita-symbol,
\begin{align*}
\|\curl & (\nabla \axi \times \Vxi)\|_{C_{t,x}^1}\\
&\lesssim \sum_{\ell, k,m,i,j} \|\mathcal{E}_{\ell k m} \mathcal{E}_{ijk}\partial_{x_\ell} (\partial_{x_i} \axi V^j_{(\xi)})\vec{e}_m\|_{C_{t,x}^1}\\
& \lesssim \sum_{\ell, i,j} \|\partial_{x_\ell} \partial_{x_i} \axi V^j_{(\xi)}\|_{C_{t,x}^1}+\| \partial_{x_i} \axi \partial_{x_\ell} V^j_{(\xi)}\|_{C_{t,x}^1}\\
&\lesssim \sum_{|\alpha|\leq 2} \sum_{|\beta|\leq 1} \|D^\alpha \axi D^\beta \Vxi\|_{C_{t,x}^1}\\
&\lesssim  \sum_{|\alpha|\leq 2} \sum_{|\beta|\leq 1} \bigg\{ \|D^\alpha\axi\|_{C_tL^\infty}\|D^\beta \Vxi\|_{C_tL^\infty}+\|\partial_t D^\alpha\axi\|_{C_tL^\infty}\|D^\beta \Vxi\|_{C_tL^\infty}\bigg\}\\
&\hspace{.5cm}+\sum_{|\alpha|\leq 2} \sum_{|\beta|\leq 1}\bigg\{\|D^\alpha\axi\|_{C_tL^\infty}\|\partial_t D^\beta \Vxi\|_{C_tL^\infty}+\sum_{|\gamma|=1}\|D^{\alpha+\gamma}\axi\|_{C_tL^\infty}\|D^\beta \Vxi\|_{C_tL^\infty}\bigg\}\\
& \hspace{.5cm}+\sum_{|\alpha|\leq 2} \sum_{|\beta|\leq 1}\sum_{|\gamma|=1}\|D^\alpha\axi\|_{C_tL^\infty}\|D^{\beta+\gamma} \Vxi\|_{C_tL^\infty}\\
& \lesssim \|\axi\|_{C_{t,x}^3} \bigg( \sum_{|\beta|\leq 1} \|\partial_tD^\beta \Vxi \|_{C_tL^\infty}+ \sum_{|\beta|\leq 2} \|D^\beta \Vxi\|_{C_tL^\infty} \bigg)\\
& \lesssim \frac{M}{4|\lambda|}  \ell^{-29} \delta_{q+1}^{1/2} m^{3/8}\bar{e}^{1/2} r_\perp^{-1} r_\parallel^{-1/2} \lambda_{q+1},
\end{align*}
secondly 
\begin{align*}
\|\nabla \axi \times \curl \Vxi\|_{C_{t,x}^1}
\lesssim \frac{M}{4|\lambda|}  \ell^{-22} \delta_{q+1}^{1/2} m^{3/8} \bar{e}^{1/2} r_\perp^{-1} r_\parallel^{-1/2} \lambda_{q+1}
\end{align*}
and thirdly
\begin{align*}
\|\axi \Wcxi\|_{C_{t,x}^1} \lesssim \frac{M}{4 |\Lambda|}  \ell^{-15} \delta_{q+1}^{1/2} m^{3/8}\bar{e}^{1/2} r_\parallel^{-3/2} \lambda_{q+1}^2,
\end{align*}
verifying the claim.\\
\underline{\eqref{3.16c}:} 
Let $E_{n,\beta}\partial^n_t D^\beta w_{q+1}^{(t)}$ for any $n\in \mathbb{N}_0,\, \beta \in \mathbb{N}_0^3$ be the canonical extension of $\partial_t^n D^\beta w_{q+1}^{(t)}$ to $\mathbb{R}^3$, meaning that $E_{n,\beta}\colon W^{1,p}\left(\mathbb{T}^3 \right)\to W^{1,p}\left(\mathbb{R}^3 \right)$ denotes a linear bounded operator with \linebreak ${E_{n,\beta}\partial_t^n D^\beta w_{q+1}^{(t)}}_{|\mathbb{T}^3}=\partial_t^nD^\beta w_{q+1}^{(t)}$ almost everywhere (see \cite{Ev10}, p.268, Theorem 1 for instance). It then holds according to Morrey's inequality (see \cite{Ev10}, p.280, Theorem 4) 
\begin{align*}
\|\wt\|_{C_{t,x}^1(\mathbb{T}^3)}&\lesssim \sum_{n+|\beta|\leq 1} \|E_{n,\beta}\partial^n_t D^\beta \wt\|_{C_tC^{0,\gamma}(\mathbb{R}^3)}\\
& \lesssim \sum_{n+|\beta|\leq 1} \|E_{n,\beta}\partial^n_t D^\beta \wt\|_{C_tW^{1,p}(\mathbb{R}^3)} \\
&\lesssim \mu^{-1}  \sum_{\xi \in \Lambda} \|\mathbb{P}\mathbb{P}_{\neq 0}(a_{(\xi)}^2\psi_{(\xi)}^2\phi_{(\xi)}^2\xi)\|_{C_tW^{1,p}(\mathbb{T}^3)}\\
&\hspace{.5cm}+\mu^{-1}  \sum_{\xi \in \Lambda}\|\partial_t\mathbb{P}\mathbb{P}_{\neq 0}(a_{(\xi)}^2\psi_{(\xi)}^2\phi_{(\xi)}^2\xi)\|_{C_tW^{1,p}(\mathbb{T}^3)}  \\
& \hspace{.5cm} + \mu^{-1}  \sum_{\xi \in \Lambda}\sum_{|\beta|=1}\|D^\beta\mathbb{P}\mathbb{P}_{\neq 0} (a_{(\xi)}^2 \psi_{(\xi)}^2\phi_{(\xi)}^2\xi)\|_{C_tW^{1,p}(\mathbb{T}^3)}
\end{align*}
for any $p \in \big(3, \infty\big)$ and $\gamma:=1-\frac{3}{p}$.\\
Note that $x\mapsto (\axi^2\psi_{(\xi)}^2\phi_{(\xi)}^2 \xi)(x)$ and $x\mapsto \partial_t(\axi^2\psi_{(\xi)}^2\phi_{(\xi)}^2 \xi)(x)$ are as smooth functions on $\mathbb{T}^3$ bounded, so that they are particularly dominated by some integrable constant function. This allows us to compute
\begin{align*}
\partial_t \mathbb{P}_{\neq 0}\big(\axi^2\psi_{(\xi)}^2\phi_{(\xi)}^2 \xi\big)= \mathbb{P}_{\neq 0}\partial_t\big(\axi^2\psi_{(\xi)}^2\phi_{(\xi)}^2 \xi\big), 
\end{align*}
which implies together with \thref{Lemma 2.1} 
 \begin{align}
 \partial_t \mathbb{P}\mathbb{P}_{\neq 0}\big(\axi^2\psi_{(\xi)}^2\phi_{(\xi)}^2 \xi\big)= \mathbb{P}\mathbb{P}_{\neq 0}\partial_t\big(\axi^2\psi_{(\xi)}^2\phi_{(\xi)}^2 \xi\big)  \label{3.25}
\end{align}
and 
\begin{align*}
 \partial_{x_i} \mathbb{P}\mathbb{P}_{\neq 0}\big(\axi^2\psi_{(\xi)}^2\phi_{(\xi)}^2 \xi\big)= \mathbb{P}\partial_{x_i} \mathbb{P}_{\neq 0} \big(\axi^2\psi_{(\xi)}^2\phi_{(\xi)}^2 \xi\big)=\mathbb{P}\partial_{x_i}  \big(\axi^2\psi_{(\xi)}^2\phi_{(\xi)}^2 \xi\big). 
 \end{align*}
for each $i=1,2,3$.
Taking into account that the Leray projection is bounded on $W_{\neq 0}^{1,p}\left(\mathbb{T}^3\right)$ and the projection onto zero mean functions $\mathbb{P}_{\neq 0}$ is a bounded operator on $W^{1,p}\left(\mathbb{T}^3\right)$, the above expression amounts to
\begin{align*} 
\|\wt\|_{C_{t,x}^1}
&\lesssim \mu^{-1}  \sum_{\xi \in \Lambda} \bigg(\overbrace{\| a_{(\xi)}^2 \psi_{(\xi)}^2\phi_{(\xi)}^2\|_{C_tW^{1,p}}}^{=:\text{I}}+\overbrace{\|\partial_t   ( a_{(\xi)}^2 \psi_{(\xi)}^2 \phi_{(\xi)}^2)\|_{C_tW^{1,p}}}^{=:\text{II}}\bigg) \\
& \hspace{.5cm}+\mu^{-1}  \sum_{\xi \in \Lambda} \underbrace{\sum_{|\beta|=1}\|D^\beta ( a_{(\xi)}^2 \psi_{(\xi)}^2\phi_{(\xi)}^2)\|_{C_tW^{1,p}}}_{=:\text{III}}.
\end{align*}
Employing the formula
\begin{align*}
\Big\|\prod_{n=1}^N f_n\Big\|_{W^{1,p}} \lesssim \sum_{k=1}^N \prod_{\substack{n=1\\ n\neq k}}^N \|f_n\|_{L^\infty} \|f_k\|_{W^{1,p}},  
\end{align*}
which holds for all functions $f_n \in W^{1,p}\left(\mathbb{T}^3\right)\cap L^\infty \left( \mathbb{T}^3\right),\,n=1,\ldots,N,\, N\geq 2$, permits to deduce
\begin{align*}
\text{I}&\lesssim \|\axi\|^2_{C_tL^\infty}\|\psixi\|_{C_tL^\infty}^2\|\phixi\|_{L^\infty} \|\phixi\|_{W^{1,p}}\\
&\hspace{.5cm}+\|\axi\|^2_{C_tL^\infty}\|\psixi\|_{C_tL^\infty}\|\psixi\|_{C_tW^{1,p}}\|\phixi\|^2_{L^\infty}\\
&\hspace{.5cm}+ \|\axi\|_{C_tL^\infty}\|\axi\|_{C_tW^{1,p}}\|\psixi\|_{C_tL^\infty}^2\|\phixi\|^2_{L^\infty}.
\end{align*}
Due to the embedding $W^{1,\infty}\left(\mathbb{T}^3\right)\subseteq W^{1,p}\left(\mathbb{T}^3\right)$ and by invoking \eqref{3.8b}, \eqref{3.13a}, \eqref{3.13b} we obtain
 \begin{align*}
 \text{I} \lesssim \bigg(\frac{M}{4 |\Lambda|}\bigg)^2 \ell^{-23}\delta_{q+1}m^{3/4}\bar{e}r_\perp^{-2}r_\parallel^{-1}\lambda_{q+1}.
 \end{align*}
In the same manner we estimate
\begin{align*}
\text{II}&\lesssim \|\partial_t \axi \axi \psi_{(\xi)}^2\phi^2_{(\xi)}\|_{C_tW^{1,p}}+ \|a_{(\xi)}^2\partial_t\psixi\psixi\phi_{(\xi)}^2\|_{C_tW^{1,p}}\\
&\lesssim \bigg(\frac{M}{4 |\Lambda|}\bigg)^2 \ell^{-30}\delta_{q+1}m^{3/4}\bar{e}r_\perp^{-2}r_\parallel^{-1}\lambda_{q+1} +\bigg(\frac{M}{4 |\Lambda|}\bigg)^2 \ell^{-23}\delta_{q+1}m^{3/4}\bar{e}r_\perp^{-2}r_\parallel^{-1}\lambda_{q+1}^3\\
&\lesssim \bigg(\frac{M}{4 |\Lambda|}\bigg)^2 \ell^{-30}\delta_{q+1}m^{3/4}\bar{e}r_\perp^{-2}r_\parallel^{-1}\lambda_{q+1}^3  
\end{align*}
and
\begin{align*}
\text{III}&\lesssim \sum_{|\beta|\leq 1}\Big\{ \|D^\beta\axi \axi \psi_{(\xi)}^2\phi^2_{(\xi)}\|_{C_tW^{1,p}}+ \|a_{(\xi)}^2D^\beta\psixi\psixi\phi_{(\xi)}^2\|_{C_tW^{1,p}}\Big\}\\
&\hspace{0.5cm}+\sum_{|\beta|\leq 1}\|a_{(\xi)}^2\psi_{(\xi)}^2 D^\beta \phixi \phixi \|_{C_tW^{1,p}}\\
&\lesssim \bigg(\frac{M}{4 |\Lambda|}\bigg)^2 \ell^{-30}\delta_{q+1}m^{3/4}\bar{e} r_\perp^{-2}r_\parallel^{-1} \lambda_{q+1}+ \bigg(\frac{M}{4 |\Lambda|}\bigg)^2 \ell^{-23}\delta_{q+1}m^{3/4}\bar{e}r_\perp^{-1}r_\parallel^{-2}\lambda_{q+1}^2\\
&\hspace{0.5cm}+\bigg(\frac{M}{4 |\Lambda|}\bigg)^2 \ell^{-23}\delta_{q+1}m^{3/4}\bar{e}r_\perp^{-2}r_\parallel^{-1} \lambda_{q+1}^2\\
& \lesssim \bigg(\frac{M}{4 |\Lambda|}\bigg)^2 \ell^{-30}\delta_{q+1}m^{3/4}\bar{e} r_\perp^{-2}r_\parallel^{-1} \lambda_{q+1}^2.
\end{align*}
Therefore we conclude
\begin{align*}
\|\wt\|_{C_{t,x}^1} \lesssim \bigg(\frac{M}{4 |\Lambda|}\bigg)^2 \ell^{-30}\delta_{q+1}m^2\bar{e}r_\perp^{-1}r_\parallel^{-2} \lambda_{q+1}^2.
\end{align*}

\noindent\underline{\eqref{3.17a}:} In view of \eqref{3.14}, \eqref{3.8b} and \eqref{3.13c} we infer again with help of Levi-Civita's-symbol 
\begin{align*}
&\|\wpr+\wc\|_{C_tW^{1,p}}\\
&\hspace{.5cm} \lesssim m^{1/8} \sum_{\xi \in \Lambda} \sum_{\ell, k, n, i, j=1}^3 \|\mathcal{E}_{\ell k n} \mathcal{E}_{i j k} \partial_{x_\ell} \big(\partial_{x_i} \big(\axi \Vxi^j\big)\big)\vec{e}_n\|_{C_tW^{1,p}}\\
& \hspace{0.5cm}\lesssim m^{1/8}\sum_{\xi \in \Lambda} \bigg\{\sum_{|\alpha|=2} \sum_{|\gamma|\leq 1} \|D^\gamma \big(D^\alpha \axi \Vxi \big)\|_{C_tL^p}+ \sum_{\substack{|\alpha|=1\\ |\beta|=1}} \sum_{|\gamma|\leq 1}  \|D^\gamma \big(D^\alpha\axi D^\beta \Vxi \big)\|_{C_tL^p}\bigg\}\\
&\hspace{1cm}+ m^{1/8}\sum_{\xi \in \Lambda}\sum_{|\beta|=2} \sum_{|\gamma|\leq 1} \|D^\gamma \big( \axi D^\beta\Vxi \big)\|_{C_tL^p}\\
&\lesssim m^{1/8} \sum_{\xi \in \Lambda} \bigg\{ \|\axi\|_{C_{t,x}^3} \|\Vxi\|_{C_tL^p}+ \|\axi\|_{C_{t,x}^2} \sum_{|\beta|\leq 1} \|D^\beta\Vxi\|_{C_tL^p}\bigg\} \\
&\hspace{0.5cm} +  m^{1/8} \sum_{\xi \in \Lambda}\bigg\{ \|\axi\|_{C_{t,x}^1} \sum_{|\beta| \leq 2} \|D^\beta\Vxi\|_{C_tL^p} +\|\axi\|_{C_{t,x}^0} \sum_{|\beta|\leq 3} \|D^\beta\Vxi\|_{C_tL^p} \bigg\}\\
&\hspace{.5cm}\lesssim \frac{M}{4|\Lambda|} \delta_{q+1}^{1/2} m^{1/2}\bar{e}^{1/2} r_\perp^{2/p-1}r_\parallel^{1/p-1/2} (\ell^{-29}\lambda_{q+1}^{-2}+\ell^{-22}\lambda_{q+1}^{-1}+ \ell^{-15}+\ell^{-8}\lambda_{q+1})\\
&\hspace{.5cm}\lesssim \frac{M}{4|\Lambda|} \ell^{-29}\delta_{q+1}^{1/2} m \bar{e}^{1/2} r_\perp^{2/p-1}r_\parallel^{1/p-1/2} \lambda_{q+1}.
\end{align*}

\noindent \underline{\eqref{3.17b}:} Bearing in mind that $\mathbb{P}$ and $\mathbb{P}_{\neq 0}$ are both bounded operators on $W_{\neq 0}^{1,p}\left(\mathbb{T}^3\right)$ and $W^{1,p}\left(\mathbb{T}^3\right)$, respectively, we make use of \thref{Lemma 3.4} in order to obtain
\begin{align*}
\|\wt\|_{C_tW^{1,p}} &\lesssim \mu^{-1}   \sum_{\xi \in \Lambda} \bigg\{ \|a_{(\xi)}^2\psi_{(\xi)}^2\phi_{(\xi)}^2\|_{C_tL^p}+ \sum_{|\alpha|=1}\|D^\alpha\axi\axi \psi_{(\xi)}^2\phi_{(\xi)}^2\|_{C_tL^p}\bigg\}\\
&\hspace{0.5cm}+\mu^{-1}   \sum_{\xi \in \Lambda}\bigg\{\sum_{|\alpha|=1}\|a_{(\xi)}^2 D^\alpha \psi^2_{(\xi)}\phi_{(\xi)}^2\|_{C_tL^p}+\sum_{|\alpha|=1}\|a_{(\xi)}^2 \psi_{(\xi)}^2 D^\alpha \phi_{(\xi)}^2  \|_{C_tL^p} \bigg\}\\
&\lesssim \mu^{-1}  \sum_{\xi \in \Lambda} \|\axi\|^2_{C_{t,x}^0} \|\psixi\|_{C_tL^{2p}}^2 \|\phixi\|_{L^{2p}}^2\\
&\hspace{.5cm}+\mu^{-1}  \sum_{\xi \in \Lambda}\|\axi\|_{C_{t,x}^1}\|\axi\|_{C_{t,x}^0} \|\psixi\|_{C_tL^{2p}}^2\|\phixi\|_{L^{2p}}^2\\
&\hspace{.5cm}+\mu^{-1}  \sum_{\xi \in \Lambda} \|\axi\|_{C_{t,x}^0}^2\sum_{|\alpha |\leq 1 } \|D^\alpha \psixi \psixi\|_{C_tL^p}\|\phixi\|_{L^{2p}}^2\\
&\hspace{0.5cm}+\mu^{-1}  \sum_{\xi \in \Lambda}\|\axi\|_{C_{t,x}^0}^2 \|\psixi\|_{C_tL^{2p}}^2 \sum_{|\alpha |\leq 1}\|D^\alpha \phixi \phixi\|_{L^p} .
\end{align*}
Since Cauchy–Schwarz inequality implies
\begin{align*}
\sum_{|\alpha |\leq 1} \|D^\alpha \psixi \psixi\|_{C_tL^p} \lesssim \sum_{|\alpha |\leq 1} \|D^\alpha \psixi\|_{C_tL^{2p}} \|\psixi\|_{C_tL^{2p}},
\end{align*}
and 
\begin{align*}
\sum_{|\alpha |\leq 1} \|D^\alpha \phixi \phixi\|_{L^p} \lesssim \sum_{|\alpha |\leq 1} \|D^\alpha \phixi\|_{L^{2p}} \|\phixi\|_{L^{2p}} 
\end{align*}
respectively, we appeal to \eqref{3.8b}, \eqref{3.13a} and \eqref{3.13b} to conclude
\begin{align*}
\|\wt\|_{C_tW^{1,p}} \lesssim \left(\frac{M}{4|\Lambda|}\right)^2 \ell^{-23}\delta_{q+1} m^2\bar{e}r_\perp^{2/p-1}r_{\parallel}^{1/p-2}. 
\end{align*}

\end{proof}

\pagebreak

\boldmath
\subsection{Verifying the Key Bounds on the Level $q+1$} \label{Section 3.4.2}
\unboldmath
With these preparations in hands, we are now able to justify \eqref{3.3a} and \eqref{3.3b} on the level $q+1$.
\subsubsection{First Key Bound (\ref{3.3a})} \label{Section 3.4.2.1}
By \eqref{3.15d}, \eqref{3.15b}, \eqref{3.15c} and \eqref{3.1b} we find
\begin{align*}
\|w_{q+1}\|_{C_tL^2}\leq  \delta_{q+1}^{1/2} m \bar{e}^{1/2} \left(C\frac{M}{4|\Lambda|}+C\frac{M}{4|\Lambda|}\lambda_{q+1}^{44\alpha-2/7}+C\frac{M}{4|\Lambda|}\frac{M}{4|\Lambda|} \lambda_{q+1}^{33\alpha-1/7}\right)
\end{align*}
for some constant $C>\frac{4|\Lambda|}{3M}$. In order to absorb the subsequent implicit constants we chose
\begin{align}
M_0:=  3C\frac{M}{4|\Lambda|}  \label{3.26}.
\end{align}
Recalling the requirements $\frac{M}{4|\Lambda|}\lambda_{q+1}^{33\alpha-1/7}\leq 1$ and $161\alpha<\frac{1}{7},\, a\geq 3600,\, b\geq 7$, ensuring $\lambda_{q+1}^{44\alpha-2/7}\leq 1$, permits to achieve
\begin{align}
\|w_{q+1}\|_{C_tL^2} \leq  M_0 \delta_{q+1}^{1/2} m\bar{e}^{1/2}\label{3.27}, 
\end{align}
which combined with \eqref{3.6b}, actually confirms 
\begin{align*}
\|v_{q+1}\|_{C_tL^2}\leq \|v_\ell\|_{C_tL^2}+ \|w_{q+1}\|_{C_tL^2} \leq M_0\left(1+\sum_{r=1}^{q+1} \delta_r^{1/2}\right)m \bar{e}^{1/2}.
\end{align*}
\subsubsection{Second Key Bound (\ref{3.3b})} \label{Section 3.4.2.2}
In the same manner as above, \eqref{3.16a}, \eqref{3.16b}, \eqref{3.16c}, \eqref{3.1b}, the fact $161 \alpha<\frac{1}{7}$, \eqref{3.6c} and \eqref{3.3b} furnish the existence of some constants $K, \widetilde{K}>0$, so that 
\begin{align}
\|w_{q+1}\|_{C_{t,x}^1}
&\leq \lambda_{q+1}^5 m \bar{e}^{1/2}\left(K\frac{M}{4|\Lambda|} \lambda_{q+1}^{-12/7}+K\frac{M}{4|\Lambda|}\lambda^{-2}_{q+1}+K\left(\frac{M}{4|\Lambda|}\right)^2  \lambda_{q+1}^{-6/7} \right)
 \label{3.28}
\end{align}
and
\begin{align}
\|v_\ell\|_{C_{t,x}^1}\leq \widetilde{K} \|v_q\|_{C_{t,x}^1} \leq \widetilde{K} \lambda_q^5 \lambda_{q+1}^{-5}\lambda_{q+1}^5 m \bar{e}^{1/2}, \label{3.29}
\end{align}
respectively. As pointed out in Section \ref{Section 3.1}, we increase $a$ and $b$ in a fashion that \linebreak $K\frac{M}{4|\Lambda|} \lambda_{q+1}^{-12/7}+K\frac{M}{4|\Lambda|}\lambda^{-2}_{q+1}+K\left(\frac{M}{4|\Lambda|}\right)^2  \lambda_{q+1}^{-6/7} \leq \frac{1}{2}$ as well as $\widetilde{K}\lambda_q^5\lambda_{q+1}^{-5}\leq \frac{1}{2}$. That means \eqref{3.3b} stays true on the level $q+1$.

\subsection{Control of the Energy} \label{Section 3.4.3}
It remains to affirm \eqref{3.5} at level $q+1$, which is equivalent in showing 
\begin{align*}
|\Theta^{-2}(t) e(t)(1- \delta_{q+2})-\|v_{q+1}(t)\|_{L^2}^2|\leq \frac{1}{4}\delta_{q+2}\Theta^{-2}(t)e(t),
\end{align*}
or expressed in terms of the function $\eta_q$
\begin{align*}
|3(2\pi)^3\eta_q(t)+\|v_q(t)\|_{L^2}^2-\|v_{q+1}(t)\|_{L^2}^2|\leq  \frac{1}{4}\delta_{q+2}\Theta^{-2}(t)e(t)
\end{align*}
whenever $t \in (-\infty, \tau]$.\\
As we will see it is not enough to require only boundedness of the energy $e$ here, rather it is necessary to ask for a uniform bound of its derivative $e^\prime$. From a physical point of view it means that the change of kinetic energy and therefore the acceleration of a fluid can not become arbitrary large. For example, if we consider a river that flows uphill, the gravity  will influence its flow rate, so that the gradient can only attend limited values.\\
Anyway, lets come back to the mathematical computations: 
\begin{align}\label{3.30}
\hspace{-0.3cm}|3&(2\pi)^3\eta_q(t)+\|v_q(t)\|_{L^2}^2-\|v_{q+1}(t)\|_{L^2}^2| \notag \\
&\leq \Big\|3(2\pi)^3\eta_q+\|v_q\|^2_{L^2}-\||v_\ell+\wpr+\wc+\wt|^2\|_{L^1}\Big\|_{C_t} \notag \\
\begin{split}
&\leq \overbrace{\Big\|3(2\pi)^3\eta_q- \|\wpr\|_{L^2}^2\Big\|_{C_t}}^{\text{=:I}} +\overbrace{\Big\| \|v_q\|^2_{L^2}-\|v_\ell\|^2_{L^2}\Big\|_{C_t}}^{\text{=:II}}+\overbrace{2\|v_\ell \cdot \wpr\|_{C_tL^1}}^{\text{=:III}}  \\
&\hspace{0.5cm}+\underbrace{2\|v_\ell \cdot(\wc+\wt)\|_{C_tL^1}+2\|\wpr \cdot(\wc+\wt)\|_{C_tL^1}}_{\text{=:IV}}+\underbrace{\|\wc+\wt\|^2_{C_tL^2}}_{\text{=:V}}\\
\end{split}
\end{align}
and we proceed with a bound for I. \\
\begin{tikzpicture}[baseline=(char.base)]
\node(char)[draw,fill=white,
  shape=rounded rectangle,
  drop shadow={opacity=.5,shadow xshift=0pt},
  minimum width=.8cm]
  {\Large I};
\end{tikzpicture} 
For this purpose we first assert
\begin{adjustwidth}{20pt}{20pt}
\begin{flalign} 
\text{\underline{1.Claim:}}\ \  & \Theta_\ell \left(\wpr \otimes \wpr\right)=\sum_{\xi \in \Lambda} \axi^2 \mathbb{P}_{\neq 0}\big(\Wxi \otimes \Wxi \big)+\rho \Id-\mathring{R}_\ell.& \label{3.31}
\end{flalign}
\begin{proof}
Keeping in mind that the mutually disjoint supports of $\left( \Wxi\right)_{\xi \in \Lambda}$ causes \linebreak $\Wxi \otimes W_{(\xi^\prime)} \equiv 0$ for $\xi \neq \xi^\prime$, we invoke \thref{Lemma 3.4} to deduce
\begin{align*}
&\wpr \otimes  \wpr\\
&\hspace{.5cm}= \Theta_\ell^{-1}\sum_{\xi \in \Lambda} \axi^2 \big(\Wxi \otimes \Wxi\big)\\
&\hspace{.5cm}= \Theta_\ell^{-1}\sum_{\xi \in \Lambda} \axi^2 \mathbb{P}_{\neq 0}\big(\Wxi \otimes \Wxi\big)+\Theta_\ell^{-1}\sum_{\xi \in \Lambda} \axi^2 \big(\xi \otimes \xi\big) (2\pi)^{-6}\|\psi_{(\xi)}^2\|_{L^1}\|\phi_{(\xi)}^2\|_{L^1}
\end{align*}
and appealing to the geometric \thref{Lemma 3.2} as well as to the normalizations \eqref{3.9}, we find
\begin{align*}
\wpr \otimes \wpr=\Theta_\ell^{-1} \sum_{\xi \in \Lambda} \axi^2 \mathbb{P}_{\neq 0}\big(\Wxi \otimes \Wxi \big)+\Theta_\ell^{-1}\left(\rho \Id-\mathring{R}_\ell\right). 
\end{align*}
\end{proof}
\end{adjustwidth}
to deduce  
\begin{align*}
|\wpr|^2-3\eta_q&= \tr\Big(\wpr \otimes \wpr\Big)-3\eta_q\\
&= \Theta_\ell^{-1}\bigg[\sum_{\xi \in \Lambda} \axi^2 \mathbb{P}_{\neq 0} \tr\Big(\Wxi \otimes \Wxi\Big) +\tr(\Id)\rho-\tr(\mathring{R}_\ell)\bigg]-3 \eta_q\\
&= 6 \Theta_\ell^{-1}\sqrt{\ell^2+\|\mathring{R}_\ell\|_F^2}+ 3(\eta_\ell-\eta_q)+ \Theta_\ell^{-1}\sum_{\xi \in \Lambda} \axi^2 \mathbb{P}_{\neq 0}|\Wxi|^2,
\end{align*}
yielding
\begin{align*}
&\Big\| \|\wpr\|_{L^2}^2-3(2\pi)^3 \eta_q \Big\|_{C_t}\\ &\hspace{.5cm}\leq \int_{\mathbb{T}^3} \Big\| 6\Theta_\ell^{-1}\sqrt{\ell^2+\|\mathring{R}_\ell\|_F^2} \Big\|_{C_t} \,dx +3 \int_{\mathbb{T}^3} \|\eta_\ell-\eta_q\|_{C_t}\, dx+\bigg\|\Theta_\ell^{-1}\sum_{\xi \in \Lambda}\int_{\mathbb{T}^3}\axi^2 \mathbb{P}_{\neq 0} |\Wxi|^2\,dx\bigg\|_{C_t}\\
&\hspace{.5cm}\leq 6m^{1/4}(2\pi)^3 \ell+6m^{1/4}\|\mathring{R}_\ell\|_{C_tL^1}+3(2\pi)^3 \|\eta_\ell-\eta_q\|_{C_t}+m^{1/4} \sum_{\xi \in \Lambda}\bigg\| \int_{\mathbb{T}^3} \axi^2 \mathbb{P}_{\neq 0} |\Wxi|^2\,dx\bigg\|_{C_t}.
\end{align*}
We continue by proving
\begin{adjustwidth}{20pt}{20pt}
\begin{flalign} 
\text{\underline{2.Claim:}}\ \  & 6 m^{1/4}(2\pi)^3 \ell \leq \frac{1}{80} \delta_{q+2} \Theta^{-2}(t) e(t). & \label{3.32}
\end{flalign}
\begin{proof}
Using $2\beta b< \frac{3 \alpha}{2}$, implied by the assumptions $4 \beta b^2< \alpha$ and $b \geq 7$, together with $e(t)\geq \underline{e}> 4$ and $\Theta^{-2}(t)\geq m^{-1/2}$ and also $\lambda_q>3600^7>\sqrt{120 }m^{3/4}\, (2\pi)^{3/2}$, we compute 
\begin{align*}
6m^{1/4}(2\pi)^3 \ell \leq \frac{3}{2} e(t)m^{1/4}(2\pi)^3 \lambda_{q+1}^{-2\beta b} \lambda_{q}^{-2}\leq \frac{1}{80m^{1/2}} e(t)\lambda_{q+2}^{-2\beta}  \leq \frac{1}{80} \delta_{q+2} \Theta^{-2}(t) e(t).
\end{align*}
\end{proof}
\end{adjustwidth}
We further need
\begin{adjustwidth}{20pt}{20pt}
\begin{flalign} 
\text{\underline{3.Claim:}}\ \  &\|\mathring{R}_\ell\|_{C_tL^1} \leq \frac{1}{480 m^{1/4}} \delta_{q+2}\Theta^{-2}(t) e(t),& \label{3.33}
\end{flalign}
\begin{proof}
It follows immediately from  Fubini's theorem, the normalization of the mollifiers and \eqref{3.3c}.
\end{proof}
\end{adjustwidth}

\begin{adjustwidth}{20pt}{20pt}
\begin{flalign} 
\text{\underline{4.Claim:}}\ \  &3(2\pi)^3 \|\eta_\ell-\eta_q\|_{C_t} \leq \frac{1}{80} \delta_{q+2}\Theta^{-2}(t) e(t)& \label{3.34}
\end{flalign}
\begin{proof}
Both terms of
\begin{align} \label{3.35}
3(2&\pi)^3 \|\eta_\ell-\eta_q\|_{C_t} \notag\\ 
&\leq (1-\delta_{q+2}) \|\Theta^{-2} e-\left( \Theta^{-2}e\right)\ast \varphi_\ell\|_{C_t}+ \Big\| \|v_q\|^2_{L^2}-\|v_q\|_{L^2}^2\ast_t \varphi_\ell \Big\|_{C_t}
\end{align}
will be estimated separately.\\ 
First, for any $s\in(-\infty,t]$ and $u \in [0,\ell]$ it holds by virtue of the product rule and It\^{o}'s formula  
\begin{align*}
&|\Theta^{-2}(s)e(s)-\Theta^{-2}(s-u)e(s-u)|\\
&\hspace{.5cm}\leq 2 \int_{(s\vee u)-u}^s|\Theta^{-2}(r)e(r)|\,dr+ 2\int_{(s\vee u)-u}^s|\Theta^{-2}(r)e(r)|\,dB_r+\int_{(s\vee u)-u}^s|\Theta^{-2}(r)e^\prime(r)|\,dr\\
&\hspace{.5cm}\leq 2m^{1/2}\bar{e}(s\wedge u)+2 m^{1/2}\bar{e}(B_s-B_{(s\vee u)-u})+ m^{1/2}\widetilde{e}(s\wedge u)\\
&\hspace{.5cm}\leq 2 m^{1/2} \bar{e}\ell+2 m^{1/2}\bar{e} \|B\|_{C_t^{0,\iota}}\ell^\iota+  m^{1/2}\widetilde{e}\ell\\
&\hspace{.5cm}\leq 5 m^{1/2}(\bar{e}+ \widetilde{e})\ell^\iota.
\end{align*}
Hence,
\begin{align} \label{3.36}
\|\Theta^{-2}e-\left( \Theta^{-2} e\right) \ast_t \varphi_\ell\|_{C_t} &\leq \sup_{s\in(-\infty,t]} \int_0^\ell |\Theta^{-2}(s)e(s)-\Theta^{-2}(s-u)e(s-u)| \varphi_\ell(u) \,du \notag\\
&\leq 5 m^{1/2}(\bar{e}+\widetilde{e})\ell^\iota,
\end{align}
where we exploited the normalization of $\varphi$ in both steps again.\\
Estimating the second term of \eqref{3.35} follows by standard mollification estimates
\begin{align*}
&\Big\|\|v_q\|_{L^2}^2-\|v_q\|_{L^2}^2\ast_t \varphi_\ell \Big\|_{C_t}\\
&\hspace{.5cm}\leq \sup_{s \in (-\infty,t]} \int_0^\ell \Big|\|v_q(s)\|^2_{L^2}-\|v_q(s-u)\|^2_{L^2} \Big| \varphi_\ell(u) \,du \\
&\hspace{.5cm}\leq \sup_{s \in (-\infty,t]} \int_0^\ell \bigg|\int_0^1 \partial_\theta \|v_q(s-\theta u)\|^2_{L^2} \,d\theta \bigg| \varphi_\ell (u) \,du \\
&\hspace{.5cm}\leq 2\sup_{s \in (-\infty,t]}\int_0^\ell \bigg| \int_0^1 \| v_q(s-\theta u)\cdot \partial_\theta v_q(s-\theta u)\|_{L^1} \,d\theta \bigg| \varphi_\ell (u) \,du \\
&\hspace{.5cm}\leq 2(2\pi)^{3/2}\sup_{s \in (-\infty,t]}\int_0^\ell \bigg|\int_0^1  \|\partial_\theta v_q(s-\theta u)\|_{L^\infty} \,d\theta \bigg| \varphi_\ell (u) \,du \|v_q\|_{C_tL^2} \\
&\hspace{.5cm} =2(2\pi)^{3/2}\sup_{s \in (-\infty,t]} \int_0^\ell \bigg| \int_0^1 u\,  \|\partial_s  v_q(s-\theta u)\|_{L^\infty} \,d\theta \bigg| \varphi_\ell (u)\, du  \, \|v_q\|_{C_tL^2}\\
&\hspace{.5cm}\leq 2(2\pi)^{3/2} \ell \|\partial_s v_q\|_{C_tL^\infty} \|v_q\|_{C_tL^2}.
\end{align*}
Therefore, it holds owing to \eqref{3.3a}, \eqref{3.3b}, \eqref{3.4} and \eqref{3.1a}
\begin{align*}
3(2\pi)^3 \|\eta_\ell-\eta_q\|_{C_t} &\leq (1-\delta_{q+2})5m^{1/2}(\bar{e}+\widehat{e})\ell^\iota  +6(2\pi)^{3/2}M_0  \ell \lambda_{q}^5m^2\bar{e}\\
&\leq 5\lambda_{q+1}^{-\frac{3\alpha}{2}\iota}m^{1/2}(\bar{e}+ \widetilde{e})+6(2\pi)^{3/2}M_0\lambda_{q+1}^{-\alpha}m^2\bar{e}\\
&\leq 12(2\pi)^{3/2}M_0 \lambda_{q+1}^{-\alpha\big(\frac{3}{2}\iota-\frac{1}{2}\big)}m^2(\bar{e}+\widetilde{e})\lambda_{q+1}^{-\frac{\alpha}{2}}.
\end{align*}
In view of $12(2\pi)^{3/2}M_0 \lambda_{q+1}^{-\alpha\big(\frac{3}{2}\iota-\frac{1}{2}\big)}m^2(\bar{e}+\widetilde{e})\leq \frac{1}{20m^{1/2}} \leq \frac{1}{80}\Theta^{-2}(t) e(t)$ and $\alpha>4\beta b$, yielding $\lambda_{q+1}^{-\alpha/2}<\lambda_{q+1}^{-2\beta b}\leq \delta_{q+2}$, we get the desired bound.

\end{proof}
\end{adjustwidth}
\vspace{0.5cm}
and lastly

\begin{adjustwidth}{20pt}{20pt}
\begin{flalign} 
\text{\underline{5.Claim:}}\ \  &\sum_{\xi \in \Lambda} \Big\|\int_{\mathbb{T}^3}\axi^2 \mathbb{P}_{\neq 0} |\Wxi|^2 \, dx\Big\|_{C_t} \leq \frac{1}{80m^{1/4}} \delta_{q+2} \Theta^{-2}(t) e(t). \label{3.37}&
\end{flalign}
\begin{proof}
Applying Green's identity $L>0$ times to the functions $\axi^2$ and $\Delta^{-L}\mathbb{P}_{\neq 0}|\Wxi|^2$ furnishes
\begin{align*}
\sum_{\xi \in \Lambda} \Big\|\int_{\mathbb{T}^3}\axi^2 \mathbb{P}_{\neq 0} |\Wxi|^2 \, dx\Big\|_{C_t} \lesssim \sum_{\xi \in \Lambda} \|\Delta^L \,\axi^2\|_{C_tL^\infty}\Big\|\int_{\mathbb{T}^3} \Delta^{-L}\, \mathbb{P}_{\neq 0} |\Wxi|^2 \, dx\Big\|_{C_t}.
\end{align*}
Thanks to Leibniz rule, we get
\begin{align*}
\|\Delta^L \, \axi^2\|_{C_tL^\infty} &\lesssim \sum_{|\alpha|=2L} \sum_{\beta \leq \alpha} \|D^\beta \axi\|_{C_tL^\infty} \|D^{\alpha-\beta}\axi\|_{C_tL^\infty} \\
&\lesssim \sum_{n=0}^{2L} \|\axi\|_{C^n_{t,x}} \|\axi\|_{C^{2L-n}_{t,x}}.
\end{align*}
Moreover, \thref{Lemma 2.5}, Cauchy-Schwarz's inequality and \thref{Lemma 2.4} entail
\begin{align*}
\Big\|\int_{\mathbb{T}^3} \Delta^{-L} \, \mathbb{P}_{\neq 0} |\Wxi|^2 \,dx \Big\|_{C_t} &\leq \int_{\mathbb{T}^3} \Big\|\Big((-\Delta)^{-1/2} \mathbb{P}_{\geq \frac{r_\perp \lambda_{q+1}}{2}}\Big)^{2L}|\Wxi|^2\Big\|_{C_t}\,dx\\ 
&\lesssim\Big\|\Big((-\Delta)^{-1/2} \mathbb{P}_{\geq \frac{r_\perp \lambda_{q+1}}{2}}\Big)^{2L}|\Wxi|^2\Big\|_{C_tL^2}\\
& \lesssim (r_\perp \lambda_{q+1})^{-2L} \|\Wxi\|^2_{C_tL^4}.
\end{align*}
Therefore, by stipulating $L=5$, we conclude with help of \eqref{3.8b}, \eqref{3.13c} and \eqref{3.1b} 
\begin{align*}
\sum_{\xi \in \Lambda} \Big\|\int_{\mathbb{T}^3}\axi^2 \mathbb{P}_{\neq 0} |\Wxi|^2 \, dx\Big\|_{C_t} &
\lesssim \left(\frac{M}{4|\Lambda|}\right)^2  \ell^{-86} \delta_{q+1} m^{3/4}\bar{e} (r_\perp \lambda_{q+1})^{-10}r_\perp^{-1} r_\parallel^{-1/2}\\
& \lesssim  \lambda_{q+1}^{174\alpha-2/7} \lambda_1^{2 \beta}\\
&\lesssim \lambda_{q+1}^{-147\alpha- 2\beta b} \lambda_1^{2\beta} \\
&\lesssim \lambda_{q+1}^{-147\alpha} \delta_{q+2}e(t).
\end{align*}
Here we used the fact $174\alpha-\frac{2}{7}<-148\alpha<-147 \alpha-2\beta b$, which follows from the constraint $161\alpha<\frac{1}{7}$ and $2 \beta b<4\beta b^2<\alpha$, in the penultimate and we employed $4 \leq\underline{e}\leq e(t)$ in the last step.\\
In order to absorb the implicit constant, we choose $a,b$ sufficiently large and $\alpha,\beta$ sufficiently small enough. In other words, we have ascertained the existence of some $\hat{K}>0$, so that 
\begin{align}
\sum_{\xi \in \Lambda} \Big\|\int_{\mathbb{T}^3}\axi^2 \mathbb{P}_{\neq 0} |\Wxi|^2 \, dx\Big\|_{C_t}  \leq \hat{K} \lambda_{q+1}^{-147\alpha}\delta_{q+2}e(t) \label{3.38}
\end{align}
holds. Imposing $\hat{K}\lambda_{q+1}^{-147\alpha}\leq \frac{1}{80m^{3/4}}$ (cf. Section \ref{Section 3.1}) yield the desired bound.
\end{proof}
\end{adjustwidth}
\vspace{0.5cm}

\noindent Armed with these statements, we are now able to bound I, i.e. taking \eqref{3.32}, \eqref{3.33}, \eqref{3.34} and \eqref{3.37} into account, we find

\begin{align*}
&\Big\| \|\wpr\|_{L^2}^2-3(2\pi)^3 \eta_q \Big\|_{C_t}\leq 4\cdot\frac{1}{80} \delta_{q+2}\Theta^{-2}(t) e(t).
\end{align*}

\noindent \begin{tikzpicture}[baseline=(char.base)]
\node(char)[draw,fill=white,
  shape=rounded rectangle,
  drop shadow={opacity=.5,shadow xshift=0pt},
  minimum width=.8cm]
  {\Large II};
\end{tikzpicture} 
The second term can according to Cauchy-Schwarz's inequality, \eqref{3.6a}, \eqref{3.3b}, \eqref{3.3a}, \eqref{3.6b}, \eqref{3.4} and \eqref{3.1a} be bounded as
\begin{align*}
\Big\|\|v_q\|_{L^2}^2-\|v_\ell\|^2_{L^2} \Big\|_{C_t}&            \leq \|v_q-v_\ell\|_{C_tL^2} \big(\|v_q\|_{C_tL^2}+\|v_\ell\|_{C_tL^2}\big) \\
& \leq (2\pi)^{3/2} \ell  \lambda_q^5 m\bar{e}^{1/2} 6M_0m\bar{e}^{1/2}\\
&\leq 6(2\pi)^{3/2} M_0\lambda_{q+1}^{-\alpha} m^2\bar{e}.
\end{align*}
Invoking the assumption $6(2\pi)^{3/2}M_0\lambda_{q+1}^{-\alpha/2}m^2\bar{e}\leq  \frac{1}{5m^{1/2}}$ together with $e(t)\geq\underline{e}\geq 4$, $ \Theta^{-2}(t)\geq m^{-1/2}$ and $\alpha>4\beta b^2$ permits to conclude
\begin{align*}
\Big\|\|v_q\|_{L^2}^2-\|v_\ell\|^2_{L^2} \Big\|_{C_t}&\leq \frac{1}{5m^{1/2}}\lambda_{q+1}^{-\alpha/2}\leq \frac{1}{20m^{1/2}} \lambda_{q+1}^{-2\beta b}e(t)\leq \frac{1}{20} \delta_{q+2}\Theta^{-2}(t) e(t).
\end{align*}

\noindent \begin{tikzpicture}[baseline=(char.base)]
\node(char)[draw,fill=white,
  shape=rounded rectangle,
  drop shadow={opacity=.5,shadow xshift=0pt},
  minimum width=.8cm]
  {\Large III};
\end{tikzpicture}
For the third term we employ Hölder's inequality with exponents $r:=3$ and $p:=\frac{3}{2}$ and appeal to \eqref{3.3b}, \eqref{3.15a}, \eqref{3.1a} and \eqref{3.1b} to deduce
\begin{align*}
\|v_\ell\cdot\wpr\|_{C_tL^1}&\leq \|v_q\|_{C_tL^\infty}(2 \pi)^{3/r}\|\wpr\|_{C_tL^p}\lesssim 2\pi \frac{M}{4|\Lambda|} \lambda_{q+1}^{19\alpha-8/21}.
\end{align*}
Furthermore, it follows from the assumption $19\alpha-1/7<160\alpha-1/7<-\alpha$ and $\alpha>4\beta b^2>2\beta b$ that
\begin{align}\label{3.39}
2 \|v_\ell \cdot \wpr\|_{C_tL^1} \leq K^\ast \,2\pi \frac{M}{4|\Lambda|}\lambda_{q+1}^{-5/21} \lambda_{q+1}^{-2\beta b}
\end{align}
holds for some constant $K^\ast>0$. Imposing $K^\ast \, 2\pi \, \frac{M}{4|\Lambda|} \lambda_{q+1}^{-5/21} \leq \frac{1}{5m^{1/2}}$ and remembering that $e(t)\geq \underline{e}\geq 4$ and $ \Theta^{-2}(t)\geq m^{-1/2}$ we find
\begin{align*}
2\|v_\ell \cdot \wpr\|_{C_tL^1} \leq \frac{1}{20} \delta_{q+2} \Theta^{-2}(t) e(t).
\end{align*}

\noindent \begin{tikzpicture}[baseline=(char.base)]
\node(char)[draw,fill=white,
  shape=rounded rectangle,
  drop shadow={opacity=.5,shadow xshift=0pt},
  minimum width=.8cm]
  {\Large IV};
\end{tikzpicture} 
Next, we aim at estimating IV also as $\text{IV}\leq \frac{1}{20} \delta_{q+2}\Theta^{-2}(t)e(t)$. Namely, after applying Cauchy–Schwarz's inequality, we use \eqref{3.6b}, \eqref{3.15d}, \eqref{3.15b}, \eqref{3.15c} and \eqref{3.1b} to obtain
\begin{align*}
&2\|v_\ell \cdot(\wc+\wt)\|_{C_tL^1}+2\|\wpr\cdot(\wc+\wt)\|_{C_tL^1}\\
& \hspace{.5cm}\leq 2\Big(\|v_\ell\|_{C_tL^2}+\|\wpr\|_{C_tL^2}\Big)\Big(\|\wc\|_{C_tL^2}+\|\wt\|_{C_tL^2}\Big)\\
& \hspace{.5cm}\lesssim \left(M_0\lambda_{q+1}^\alpha+\frac{M}{4|\Lambda|}\lambda_{q+1}^\alpha \right)  \left(\frac{M}{4|\Lambda|}\lambda_{q+1}^{45\alpha-2/7}+\left(\frac{M}{4|\Lambda|}\right)^2\lambda_{q+1}^{34\alpha-1/7}\right)\\
&\hspace{.5cm} \lesssim \left(M_0+\frac{M}{4|\Lambda|} \right)  \left(\frac{M}{4|\Lambda|}+\left(\frac{M}{4|\Lambda|}\right)^2\right) \lambda_{q+1}^{46\alpha-1/7}.
\end{align*}
Due to $46\alpha-\frac{1}{14}<-2\beta b$, entailing from $160 \alpha- 1/7<-\alpha$ and $\alpha>4\beta b^2$, it holds
\begin{align} \label{3.40}
\begin{split}
&2\|v_\ell \cdot(\wc+\wt)\|_{C_tL^1}+2\|\wpr \cdot(\wc+\wt)\|_{C_tL^1}\\
&\hspace{.5cm} \leq K^\prime\left(M_0+\frac{M}{4|\Lambda|} \right)  \left(\frac{M}{4|\Lambda|}+\left(\frac{M}{4|\Lambda|}\right)^2\right)\lambda_{q+1}^{-\frac{1}{14}} \lambda_{q+1}^{-2\beta b}
\end{split}
\end{align}
for some constant $K^\prime>0$. Similarly as above we achieve
\begin{align*}
2\|v_\ell \cdot(\wc+\wt)\|_{C_tL^1}+2\|\wpr \cdot(\wc+&\wt)\|_{C_tL^1}\leq \frac{1}{20} \delta_{q+2}\Theta^{-2}(t) e(t)
\end{align*}
by assuming $ K^\prime\left(M_0+\frac{M}{4|\Lambda|} \right)  \left(\frac{M}{4|\Lambda|}+\left(\frac{M}{4|\Lambda|}\right)^2\right)\lambda_{q+1}^{-\frac{1}{14}}\leq \frac{1}{5m^{1/2}}$.

\noindent \begin{tikzpicture}[baseline=(char.base)]
\node(char)[draw,fill=white,
  shape=rounded rectangle,
  drop shadow={opacity=.5,shadow xshift=0pt},
  minimum width=.8cm]
  {\Large V};
\end{tikzpicture}
To bound the last term, we proceed similarly as in IV. Namely, thanks to \eqref{3.15b}, \eqref{3.15c}, \eqref{3.1b} and to the required $160\alpha-1/7<-\alpha$ and $\alpha>4\beta b^2$ it holds
\pagebreak
\begin{align*}
\|\wc+\wt\|^2_{C_tL^2} &\lesssim \left(\frac{M}{4|\Lambda|} \right)^2  \lambda_{q+1}^{90\alpha-4/7}+\left(\frac{M}{4|\Lambda|} \right)^4 \lambda_{q+1}^{68\alpha-2/7}\\
& \lesssim \left(\left(\frac{M}{4|\Lambda|} \right)^2+\left(\frac{M}{4|\Lambda|} \right)^4\right)  \lambda_{q+1}^{-2\beta b -1/7}.
\end{align*}
Moreover, we already know about the existence of some $K^{\prime \prime}>0$, such that 
\begin{align} \label{3.41}
\|\wc+\wt\|^2_{C_tL^2} \leq K^{\prime \prime} \left(\left(\frac{M}{4|\Lambda|} \right)^2+\left(\frac{M}{4|\Lambda|} \right)^4\right)  \lambda_{q+1}^{-1/7} \delta_{q+2}.
\end{align}
Choosing the parameters in such a way that $K^{\prime \prime} \left(\left(\frac{M}{4|\Lambda|} \right)^2+\left(\frac{M}{4|\Lambda|} \right)^4\right)  \lambda_{q+1}^{-1/7}\leq \frac{1}{5m^{1/2}}$, leads together with $e(t)\geq \underline{e}\geq 4$ and $\Theta^{-2}(t)\geq m^{-1/2}$ to
\begin{align*}
\|\wc+\wt\|^2_{C_tL^2}\leq \frac{1}{20}\delta_{q+2}\Theta^{-2}(t) e(t).
\end{align*}
Finally, plugging all the above estimates from I through to V into \eqref{3.30} proves \eqref{3.5} on the level $q+1$.

\subsection{Convergence of the Sequence}\label{Section 3.4.4}
Moreover, we claim $\left(v_q\right)_{q \in \mathbb{N}_0}$ to be a Cauchy sequences in $C\left((-\infty,\tau]; L^2\left( \mathbb{T}^3 \right)\right)$.\\
Thanks to \eqref{3.27} and \eqref{3.6a} we obtain
\begin{align}\label{3.42}
\|v_{q+1}-v_q\|_{C_tL^2}\leq \|\omega_{q+1}\|_{C_tL^2}+\|v_\ell-v_q\|_{C_tL^2} \leq \left(M_0+(2\pi)^{3/2}\right) \delta_{q+1}^{1/2}m\bar{e}^{1/2}
\end{align}
for any $t \in (-\infty,\tau]$ and $q \in \mathbb{N}_0$, which results together with $a^{\beta b}\geq 2$ in
\begin{align*}
\sum_{q\geq 0} \|v_{q+1}-v_q\|_{C_tL^2}& \leq  \left(M_0+(2\pi)^{3/2}\right) m \bar{e}^{1/2}\sum_{q\geq 0} a^{\beta b-(q+1)\beta b}\\
&\hspace{-0.3cm}\overset{\text{geom.}}{\underset{\text{series}}{=}} \left(M_0+(2\pi)^{3/2}\right) m \bar{e}^{1/2}\frac{1}{1-a^{-\beta b}}\\
& \leq 2\left(M_0+(2\pi)^{3/2}\right)m\bar{e}^{1/2}.
\end{align*}
As a consequence the sequence $\left(\sum\limits_{q=0}^n \|v_{q+1}-v_q\|_{C_tL^2}\right)_{n\in \mathbb{N}_0}$ converges and is therefore in particular Cauchy in $\mathbb{R}$, i.e. for each $\varepsilon>0$ there exists some $N\in \mathbb{N}_0$, such that 
\begin{align*}
 \|v_n-v_k\|_{C_tL^2}\leq \sum_{q=k}^{n-1} \|v_{q+1}-v_q\|_{C_tL^2} \leq \varepsilon
\end{align*}
holds for every $n\geq k\geq N$, verifying the claim.

\boldmath
\section{Decomposition of the Reynolds Stress $\mathring{R}_{q+1}$} \label{Section 3.5}
\unboldmath
In order to find an expression for the Reynolds error at level $q+1$ we plug $v_{q+1}$ into \eqref{3.2} and
exploit the formula $\divs \big(A \big)= \divs\big(\mathring{A} \big)+\frac{1}{3} \nabla \tr\big(A\big)$ together with $\tr(a \otimes b)=a \cdot b $, which holds for arbitrary quadratic matrices $A$ and vectors $a,\, b$, respectively, to derive

\begin{align*}
\divs(&\mathring{R}_{q+1})-\nabla p_{q+1}\\[15pt]
\begin{split}&:= \underbrace{\big[\partial_tv_q+\frac{1}{2}v_q-\Delta v_q+\Theta \divs(v_q \otimes v_q) \big]}_{= \divs(\mathring{R}_q)-\nabla p_q}\ast_t \varphi_\ell \ast_x \phi_\ell-[\Theta \divs(v_q \otimes v_q)]\ast_t \varphi_\ell \ast_x \phi_\ell\\
&\hspace{0.5cm}+\partial_t w_{q+1}+\frac{1}{2} w_{q+1} -\Delta w_{q+1}+ \Theta_\ell \divs(v_{q+1}\otimes v_{q+1})+(\Theta-\Theta_\ell) \divs(v_{q+1}\otimes v_{q+1})\end{split}\\[15pt]
&=\underbrace{\partial_t (\wpr+\wc)+\frac{1}{2} w_{q+1}-\Delta w_{q+1} +\Theta_\ell \divs(v_\ell \mathring{\otimes}w_{q+1}+w_{q+1}\mathring{\otimes}v_\ell)}_{=:\divs(\mathring{R}_\text{lin})}+ \nabla \underbrace{\frac{2}{3}\Theta_\ell(v_\ell \cdot w_{q+1})}_{=:p_\text{lin}}\\
&\hspace{0.5cm}+\underbrace{\partial_t \wt+ \divs(\Theta_\ell \wpr \otimes \wpr+\mathring{R}_\ell)-\nabla p_\ell}_{=:\divs(\mathring{R}_\text{osc})+\nabla p_\text{osc}}\\
&\hspace{0.5cm}+\divs\bigg(\underbrace{\Theta_\ell\Big[(\wc+\wt) \mathring{\otimes} w_{q+1}+ \wpr \mathring{\otimes}(\wc+\wt) \Big]}_{:=\mathring{R}_\text{cor}}\bigg)\\
&\hspace{.5cm}+\nabla \underbrace{ \frac{1}{3} \Theta_\ell \Big[(\wc+\wt)\cdot w_{q+1}+\wpr\cdot (\wc+\wt)\Big]}_{=:p_\text{cor}}\\
&\hspace{0.5cm}+ \divs\bigg(\underbrace{(\Theta-\Theta_\ell)v_{q+1} \mathring{\otimes} v_{q+1}+\Theta_\ell v_\ell \mathring{\otimes}v_\ell -(\Theta v_q\mathring{\otimes} v_q)\ast_t \varphi_\ell \ast_x \phi_\ell}_{:= \mathring{R}_\text{com}} \bigg)\\
&\hspace{.5cm}+\nabla \underbrace{\frac{1}{3}\bigg[(\Theta-\Theta_\ell)|v_{q+1}|^2+\Theta_\ell |v_\ell|^2-(\Theta |v_q|^2)\ast_t\varphi_\ell \ast_x \phi_\ell  \bigg]}_{=:p_\text{com}},
\end{align*}
where we have set $p_\ell:= p_q\ast_t \varphi_\ell \ast_x \phi_\ell$.

\noindent Keeping in mind that $\divs\big(\axi^2 \mathbb{P}_{\neq 0} (\Wxi \otimes \Wxi)\big)$ has zero mean, since $\axi^2 \mathbb{P}_{\neq 0} (\Wxi \otimes \Wxi)$ is a smooth function with periodic boundary conditions, we invoke \eqref{3.31} to rewrite the oscillation error as 
\begin{align} \label{3.43}
\begin{split}
\divs(\mathring{R}_{\text{osc}})+\nabla p_{\text{osc}} &=\partial_t \wt+\sum_{\xi \in \Lambda} \mathbb{P}_{\neq 0}\Big[  \mathbb{P}_{\neq 0}(\Wxi \otimes \Wxi)\nabla\axi^2\Big]\\&
\hspace{0.5cm} +\sum_{\xi \in \Lambda} \mathbb{P}_{\neq 0}\bigg[ \axi^2 \divs\Big(\mathbb{P}_{\neq 0}(\Wxi \otimes \Wxi)\Big) \bigg] +\nabla (\rho-p_\ell) , 
\end{split}
\end{align}
where 
\begin{align*}
\big(\divs( \Wxi \otimes \Wxi)\big)_i
&=\sum_{j=1}^3  2n_\ast r_\perp \lambda \xi_j \psi_{r_\parallel}^{\prime} \psixi\phi_{(\xi)}^2\xi_i\xi_j\\
&\hspace{0.5cm}+\sum_{j=1}^3 2 \, \psi_{(\xi)}^2 \big[n_\ast r_\perp \lambda (A_\xi)_j \partial_{y_1}\phi_{r_\perp}+n_\ast r_\perp \lambda (\xi \times A_\xi)_j \partial_{y_2}\phi_{r_\perp} \big] \phixi \xi_i\xi_j\\
&=\mu^{-1} \partial_t(\psi_{(\xi)}^2\phi_{(\xi)}^2 \xi_i),
\end{align*}
with $y_1:=n_\ast r_\perp \lambda (x-\alpha_\xi)\cdot A_\xi$ and $y_2:=n_\ast r_\perp\lambda (x-\alpha_\xi)\cdot (\xi \times A_\xi)$. The second step follows from the fact that $\{\xi,A_\xi,\xi \times A_\xi\}$ form an orthonormal basis of $\mathbb{R}^3$.\\
Therefore \eqref{3.43} boils, thanks to \eqref{3.25}, down to
\begin{align*}
&\divs(\mathring{R}_{\text{osc}})+\nabla p_{\text{osc}}\\
&\hspace{.5cm}=-\mu^{-1} \sum_{\xi \in \Lambda} \mathbb{P}\mathbb{P}_{\neq 0} \Big[ \partial_t \big(\axi^2\psi_{(\xi)}^2\phi_{(\xi)}^2 \xi \big) \Big]+ \sum_{\xi \in \Lambda} \mathbb{P}_{\neq 0} \Big[ \mathbb{P}_{\neq 0}\big(\Wxi \otimes \Wxi\big)\nabla \axi^2 \Big]\\
&\hspace{1cm}+ \mu^{-1} \sum_{\xi \in \Lambda} \mathbb{P}_{\neq 0} \Big[\partial_t \big(\axi^2\psi_{(\xi)}^2\phi_{(\xi)}^2 \xi\big) \Big]-\mu^{-1} \sum_{\xi \in \Lambda} \mathbb{P}_{\neq 0} \Big[\big(\partial_t \axi^2\big)\psi_{(\xi)}^2\phi_{(\xi)}^2 \xi \Big]+\nabla (\rho-p_\ell)
\end{align*}
and because of $\Id- \mathbb{P}= \nabla \Delta^{-1} \divs$ and \thref{Lemma 2.5} we may continue by writing
\begin{align*}
\divs(\mathring{R}_{\text{osc}})+\nabla p_{\text{osc}}
&=\nabla \bigg[\Delta^{-1} \divs\Big(\mu^{-1}\sum_{\xi \in \Lambda} \mathbb{P}_{\neq 0} \big[\partial_t (\axi^2\psi_{(\xi)}^2\phi_{(\xi)}^2 \xi ) \big]\Big)+\rho-p_\ell \bigg]\\
&\hspace{0.5cm}+ \sum_{\xi \in \Lambda} \mathbb{P}_{\neq 0} \Big[ \mathbb{P}_{\geq \frac{r_\perp \lambda_{q+1}}{2} }\big(\Wxi \otimes \Wxi\big)\nabla \axi^2 \Big]\\
&\hspace{0.5cm}-\mu^{-1} \sum_{\xi \in \Lambda} \mathbb{P}_{\neq 0} \Big[ \big(\partial_t \axi^2\big)\psi_{(\xi)}^2\phi_{(\xi)}^2 \xi \Big].\\
\end{align*}
\noindent
In order to find a specific representation of the stress terms $R_{\text{lin}}$ and $R_{\text{osc}}$ we need a right inverse of the divergence operator. We recall the one that emerged in \cite{DLS13}, which acts on the space of smooth, $\mathbb{R}^3$-valued vector fields on $ \mathbb{T}^3$ with zero mean and which takes values in $L^p\left(\mathbb{T}^3; \mathring{\mathbb{R}}_{\text{sym}}^{3\times 3}\right)$.

\begin{lemma} \thlabel{Lemma 3.8} 
The operator $\mathcal{R}$ defined by 
\begin{gather*}
\mathcal{R}\colon \left(C_{\neq 0}^\infty\left(\mathbb{T}^3; \mathbb{R}^3\right),\|\cdot\|_{L^p}\right) \to \left(L^p\left(\mathbb{T}^3; \mathring{\mathbb{R}}_{\text{sym}}^{3\times 3}\right), \|\cdot\|_{L^p}\right),\\
\mathcal{R}u:= \nabla \otimes (\Delta^{-1}u)+\Big(\nabla \otimes (\Delta^{-1}u)\Big)^T-\frac{1}{2} \Big((\nabla \otimes \nabla) \Delta^{-1}+ \Id \Big)\divs(\Delta^{-1}u)
\end{gather*}
is a right inverse of the divergence operator and is particularly bounded for $p \in (1,\infty)$.
\end{lemma}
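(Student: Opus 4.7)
The proof splits naturally into an algebraic part (verifying that $\mathcal{R}u$ lands in the correct space and inverts the divergence) and an analytic part ($L^p$-boundedness). The operator is well-defined on $C_{\neq 0}^\infty(\mathbb{T}^3;\mathbb{R}^3)$ because $\Delta^{-1}$ is a well-defined bijection on the space of zero-mean distributions on the torus, and each derivative afterwards preserves smoothness. So the plan is to first check the three pointwise properties, then invoke a multiplier theorem.

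For the algebraic part I would proceed coordinatewise. Writing $v := \Delta^{-1}u$, the definition reads
\begin{align*}
(\mathcal{R}u)_{ij} = \partial_i v_j + \partial_j v_i - \tfrac{1}{2}\bigl(\partial_i\partial_j \Delta^{-1} + \delta_{ij}\bigr)\divs v.
\end{align*}
Symmetry is immediate since each of the three summands is symmetric in $i,j$. For tracelessness I would compute $\tr(\mathcal{R}u) = 2\divs v - \tfrac{1}{2}(\Delta \Delta^{-1} + 3)\divs v = 2\divs v - 2\divs v = 0$, using $\sum_i \partial_i^2 \Delta^{-1} = \Id$ on the zero-mean subspace. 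For the right-inverse property I would take $\sum_i \partial_i (\mathcal{R}u)_{ij}$: the first summand gives $\Delta v_j = u_j$; the second gives $\partial_j \divs v$; and the third gives $-\tfrac{1}{2}\partial_j(\Delta \Delta^{-1} + 1)\divs v = -\partial_j \divs v$. These cancel to leave $u_j$.

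For the $L^p$-boundedness, each scalar entry of $\mathcal{R}u$ is a finite sum of Fourier multipliers acting on the components of $u$. Tracking the symbols, $\partial_i \Delta^{-1}$ corresponds to $-in_i/|n|^2$ (order $-1$), while $\partial_i\partial_j\Delta^{-1}\divs\Delta^{-1}$ produces a symbol of the shape $-n_in_jn_k/|n|^4$ (also order $-1$), and the identity summand after applying $\divs\Delta^{-1}$ contributes $-in_k/|n|^2$. All of these are restrictions to $\mathbb{Z}^3\setminus\{0\}$ of symbols of nonpositive order satisfying the Mikhlin–Hörmander condition $|\partial^\alpha m(\xi)| \lesssim_\alpha |\xi|^{-|\alpha|}$, so the periodic multiplier theorem (e.g.\ \thref{Lemma 2.3}-type reasoning applied to standard CZ kernels) yields boundedness on $L^p(\mathbb{T}^3)$ for every $p\in(1,\infty)$. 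Alternatively one can decompose each multiplier as a composition of a Riesz transform (bounded on $L^p$) with $(-\Delta)^{-1/2}$ applied to zero-mean functions (bounded on $L^p$ as a smoothing operator via \thref{Lemma 2.4}-type estimates).

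The only real obstacle is bookkeeping: making sure the symmetrisation and the trace subtraction really produce a traceless symmetric tensor that still has divergence equal to $u$. This is a direct calculation once one commits to index notation, and the $L^p$ theory is then a standard invocation of the Mikhlin multiplier theorem on the torus; I would simply cite \cite{DLS13} for the precise version of this construction.
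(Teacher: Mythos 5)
Your proof is correct, and it matches the standard argument. Note, however, that the paper does not actually spell out a proof of \thref{Lemma 3.8}: it merely states the lemma and refers to \cite{DLS13}, where the operator $\mathcal{R}$ was introduced (see Definition 4.2 and Lemma 4.3 there). Your reconstruction is faithful to that reference. The algebraic part — symmetry by inspection, tracelessness via $\tr(\mathcal{R}u)=2\divs v-\tfrac12(\Delta\Delta^{-1}+3)\divs v=0$ on zero-mean data, and $\divs\mathcal{R}u=u$ by the cancellation $\partial_j\divs v-\partial_j\divs v$ — is exactly the index computation one finds in \cite{DLS13}, and it is carried out correctly. The analytic part is also sound: each entry of $\mathcal{R}u$ is a Fourier multiplier whose symbol is homogeneous of nonpositive degree (degree $-1$ in each of the three terms), hence bounded and satisfying the Mikhlin–Hörmander condition on $\{|\xi|\geq 1/2\}$, which is all one needs on $\mathbb{Z}^3\setminus\{0\}$; alternatively the decomposition into Riesz transforms composed with $(-\Delta)^{-1/2}$ gives the same conclusion. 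The one small presentational caveat is that \thref{Lemma 2.4} of this paper concerns $(-\Delta)^{-1/2}\mathbb{P}_{\geq\kappa}$ rather than $(-\Delta)^{-1/2}$ on all zero-mean functions, so if you lean on that route you should instead appeal directly to the boundedness of $(-\Delta)^{-1/2}\mathbb{P}_{\neq 0}$ (which follows from the same multiplier reasoning since $|n|\geq 1$ on $\mathbb{Z}^3\setminus\{0\}$) rather than to \thref{Lemma 2.4} itself. With that adjustment the argument is complete.
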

\, \\
\noindent
We can moreover formulate the ensuing Lemma, which states that the composition of $\mathcal{R}$ with the differential operators $\Delta$, $\curl$ and with the Fourier-cut-off operator $\mathbb{P}_{\geq \kappa/2}$ are bounded operators as well.

\begin{lemma} \thlabel{Lemma 3.9} 
The composition of operators
\begin{itemize}
\item[i)] $
\mathcal{R}\curl \colon \left(C^\infty\left(\mathbb{T}^3;\mathbb{R}^3\right),\|\cdot\|_{L^p}\right) \to \left(L^p\left(\mathbb{T}^3; \mathring{\mathbb{R}}_{\text{sym}}^{3\times 3}\right), \|\cdot\|_{L^p}\right),
$

 \item[ii)]$
\mathcal{R}\Delta \colon \left(C^\infty\left(\mathbb{T}^3;\mathbb{R}^3\right),\|\cdot\|_{W^{1,p}}\right) \to \left(L^p\left(\mathbb{T}^3; \mathring{\mathbb{R}}_{\text{sym}}^{3\times 3}\right), \|\cdot\|_{L^p}\right),
$
\item[iii)]$
\mathcal{R}\mathbb{P}_{\geq\kappa}\colon \left( C^\infty\left(\mathbb{T}^3;\mathbb{R}^3\right) , \|\cdot\|_{L^p}\right) \to \left(L^p\left(\mathbb{T}^3; \mathring{\mathbb{R}}_{\text{sym}}^{3\times 3}\right), \|\cdot\|_{L^p}\right) 
$
\end{itemize}
are for $p \in (1,\infty)$ continuous ones. More precisely we find
\begin{align*}
\|\mathcal{R}\mathbb{P}_{\geq\kappa}\|_{L^p\to L^p} \lesssim \frac{1}{\kappa}.
\end{align*}
\end{lemma}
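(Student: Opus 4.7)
The plan is to expose $\mathcal{R}$ as a composition of $(-\Delta)^{-1/2}$ with zero-order Calder\'on--Zygmund operators, and then let each of the three statements follow from that factorisation combined with the bounds already proved in the preliminaries. Concretely, each building block appearing in the definition of $\mathcal{R}$, namely $\nabla\otimes\Delta^{-1}$, $(\nabla\otimes\Delta^{-1})^T$, and $(\nabla\otimes\nabla)\Delta^{-1}\circ\divs\circ\Delta^{-1}$, has on the Fourier side a symbol homogeneous of degree $-1$ away from $n=0$, and factors cleanly as $|n|^{-1}$ times a symbol that is homogeneous of degree $0$. Symbols of the latter type give $L^p(\mathbb{T}^3)$-bounded operators for $p \in (1,\infty)$ by the Marcinkiewicz multiplier theorem on the torus; equivalently, they can be written as sums of products of Riesz transforms, which is the same mechanism underlying the boundedness of the Leray projection used in \thref{Lemma 2.1}. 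Throughout, the fact that $\mathcal{R}$ is implicitly defined on mean-zero inputs (via $\Delta^{-1}$) removes any issue at the zero frequency.

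For part i), I would expand $\mathcal{R}\curl u$ componentwise. Each resulting entry is a sum of terms of the form $\partial_i \partial_j \Delta^{-1}(\curl u)_k$, which in Fourier variables is a bounded multiplier of degree zero in $n$. Hence $\|\mathcal{R}\curl u\|_{L^p} \lesssim \|u\|_{L^p}$ by the Calder\'on--Zygmund argument above.

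For part ii), I would use the cancellation $\Delta^{-1}\Delta = \Id$ on mean-zero fields, which reduces $\mathcal{R}\Delta u$ to a sum of $\nabla u + (\nabla u)^T$ and $-\tfrac12((\nabla\otimes\nabla)\Delta^{-1}+\Id)\divs u$. The first two contributions are directly controlled by $\|u\|_{W^{1,p}}$, while the third is a bounded zero-order operator applied to $\divs u \in L^p$, again controlled by $\|u\|_{W^{1,p}}$.

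For part iii), I factor $\mathcal{R}\mathbb{P}_{\geq\kappa} = T\circ (-\Delta)^{-1/2}\mathbb{P}_{\geq\kappa}$, where $T$ collects the zero-order pieces of $\mathcal{R}$ (products of Riesz transforms, bounded on $L^p$ by the CZ argument). Combining this with \thref{Lemma 2.4} yields $\|\mathcal{R}\mathbb{P}_{\geq\kappa}\|_{L^p\to L^p} \leq \|T\|_{L^p\to L^p}\,\|(-\Delta)^{-1/2}\mathbb{P}_{\geq\kappa}\|_{L^p\to L^p} \lesssim \kappa^{-1}$, as required. The main obstacle, beyond bookkeeping, is justifying the Marcinkiewicz/Mikhlin multiplier theorem on $\mathbb{T}^3$ for the specific symbols at hand; this is standard but one should either cite a reference for the periodic setting or transfer the $\mathbb{R}^3$ version via de Leeuw's theorem, being careful that the restriction to mean-zero functions handles the singularity of the symbol at $n=0$.
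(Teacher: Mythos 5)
The paper states \thref{Lemma 3.9} without proof, treating it as a standard fact inherited from the convex-integration literature (the operator $\mathcal R$ is recalled from \cite{DLS13}, and the $\kappa^{-1}$ gain for $\mathcal R\mathbb{P}_{\geq\kappa}$ is the periodic analogue of \cite{BV19b}, Appendix~B). Your sketch is the standard argument and is correct: $\mathcal R$ is a $(-1)$-order Fourier multiplier, composing with $\curl$ (order $+1$), with $\Delta$ (order $+2$, traded against one derivative of $u$ after the cancellation $\Delta^{-1}\Delta=\mathrm{Id}$ on mean-zero data), or with $(-\Delta)^{-1/2}\mathbb{P}_{\geq\kappa}$ (order $-1$, contributing $\kappa^{-1}$ by \thref{Lemma 2.4}) reduces everything to a zero-order multiplier that is $L^p$-bounded by Mikhlin/Marcinkiewicz or de Leeuw transference, with the frequency-zero singularity absorbed by the mean-zero restriction built into $\Delta^{-1}$. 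A few details worth making explicit if you flesh this out: for i) and ii) you should note that $\curl u$, $\Delta u$ and $\divs u$ are automatically mean-zero on $\mathbb{T}^3$, so $\Delta^{-1}$ is well-defined on them; for iii), the factorisation $\mathcal R\mathbb{P}_{\geq\kappa}=T\,(-\Delta)^{-1/2}\mathbb{P}_{\geq\kappa}$ relies on Fourier multipliers commuting, which is immediate here; and the zero-order operator $T=\mathcal R\,(-\Delta)^{1/2}$ has smooth symbol homogeneous of degree $0$ away from the origin, so either route (products of Riesz transforms, or Mikhlin's criterion) applies. None of these is a gap, just bookkeeping you already flagged.
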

\vspace{.5cm}
\noindent Equipped with this knowledge it make sense to define 
 \begin{align*}
 \mathring{R}_{\text{lin}}&:=\mathcal{R}\partial_t (\wpr+\wc)+ \frac{1}{2}\mathcal{R} w_{q+1}-\mathcal{R}\Delta w_{q+1} +\Theta_\ell \Big(v_\ell \mathring{\otimes}w_{q+1}+w_{q+1}\mathring{\otimes}v_\ell\Big),\\
  \mathring{R}_{\text{cor}}&:=\Theta_\ell\Big[(\wc+\wt) \mathring{\otimes} w_{q+1}+ \wpr \mathring{\otimes}(\wc+\wt) \Big],\\
 \mathring{R}_{\text{osc}}&:= \sum_{\xi \in \Lambda} \mathcal{R}\mathbb{P}_{\neq 0} \Big[ \mathbb{P}_{\geq \frac{ r_\perp \lambda_{q+1}}{2}}\big(\Wxi \otimes \Wxi\big)\nabla \axi^2 \Big]-\mu^{-1} \sum_{\xi \in \Lambda}\mathcal{R} \mathbb{P}_{\neq 0} \Big[ \big(\partial_t \axi^2\big)\psi_{(\xi)}^2\phi_{(\xi)}^2 \xi \Big],\\
 \mathring{R}_{\text{com}}&:=(\Theta-\Theta_\ell)v_{q+1} \mathring{\otimes} v_{q+1}+\Theta_\ell v_\ell \mathring{\otimes}v_\ell -(\Theta v_q\mathring{\otimes} v_q)\ast_t \varphi_\ell \ast_x \phi_\ell
 \end{align*}
  with the corresponding pressure terms
 \begin{align*}
  p_{\text{lin}}&:=\frac{2}{3}\Theta_\ell(v_\ell \cdot w_{q+1}),\\
   p_{\text{cor}}&:= \frac{1}{3} \Theta_\ell \Big[(\wc+\wt)\cdot w_{q+1}+\wpr\cdot (\wc+\wt)\Big],\\
p_{\text{osc}}&:=\Delta^{-1} \divs\Big(\mu^{-1}\sum_{\xi \in \Lambda} \mathbb{P}_{\neq 0} \big[\partial_t (\axi^2\psi_{(\xi)}^2\phi_{(\xi)}^2 \xi ) \big]\Big)+\rho-p_\ell , \\
 p_{\text{com}}&:=\frac{1}{3}\Big[(\Theta-\Theta_\ell)|v_{q+1}|^2+\Theta_\ell |v_\ell|^2-(\Theta |v_q|^2)\ast_t\varphi_\ell \ast_x \phi_\ell \Big].
 \end{align*}
The Reynolds stress at level $q+1$ then becomes
\begin{align*}
\mathring{R}_{q+1}= \mathring{R}_\text{lin}+\mathring{R}_\text{cor}+\mathring{R}_\text{osc}+\mathring{R}_\text{com}
\end{align*}
and remains by construction symmetric and traceless.

\boldmath
\section{Inductive Estimates for the Reynolds Stress $\mathring{R}_{q+1}$} \label{Section 3.6}
\unboldmath
\boldmath
\subsection{Verifying the Key Bound on the Level $q+1$}\label{Section 3.6.1}
\unboldmath
Next, we aim at verifying \eqref{3.3c} on the level $q+1$. To do so, we need $r_\perp^{2/p-2}r_{\parallel}^{1/p-1}\leq \lambda_{q+1}^{\alpha}$, which can be achieved by taking $p \in (1,\frac{16}{16-7\alpha}] $. We start with a bound for the Linear error. 
\subsubsection{Linear Error}\label{Section 3.6.1.1}
Remembering that $\mathcal{R}\Delta$ and $\mathcal{R}\curl$ are according to \thref{Lemma 3.9} bounded operators on $C^\infty \left( \mathbb{T}^3;\mathbb{R}^3\right)$, we employ Hölder's inequality, \eqref{3.14} and \eqref{3.24} to obtain
\begin{align*}
\|\mathring{R}_\text{lin}\|_{C_tL^1} &\lesssim \sum_{\xi \in \Lambda} \ell^{-1}m^{5/8}\|\mathcal{R} \curl \curl (\axi \Vxi)\|_{C_tL^p}+m^{1/8}\sum_{\xi \in \Lambda} \|\mathcal{R} \partial_t\curl \curl (\axi \Vxi)\|_{C_tL^p}\\
&\hspace{.5cm}+\|w_{q+1}\|_{C_tL^p}+\|w_{q+1}\|_{C_tW^{1,p}}+m^{1/4}\|v_\ell \mathring{\otimes} w_{q+1}+w_{q+1} \mathring{\otimes} v_\ell\|_{C_tL^p}\\
& \lesssim \sum_{\xi \in \Lambda} \underbrace{\ell^{-1} m^{5/8}\|\curl(\axi \Vxi)\|_{C_tL^p}}_{=:\text{I}}+\sum_{\xi \in \Lambda} \underbrace{m^{1/8}\|\partial_t\curl(\axi \Vxi)\|_{C_tL^p}}_{=:\text{II}}+\underbrace{\|w_{q+1}\|_{C_tW^{1,p}}}_{=:\text{III}}\\
&\hspace{.5cm}+\underbrace{m^{1/4}\|v_\ell \mathring{\otimes} w_{q+1}+w_{q+1} \mathring{\otimes} v_\ell\|_{C_tL^p}}_{\text{=:IV}}.
\end{align*}

\pagebreak
\noindent \begin{tikzpicture}[baseline=(char.base)]
\node(char)[draw,fill=white,
  shape=rounded rectangle,
  drop shadow={opacity=.5,shadow xshift=0pt},
  minimum width=.8cm]
  {\Large I};
\end{tikzpicture}
We combine \eqref{3.8b} and \eqref{3.13c} and \eqref{3.1b} with $r_\perp^{2/p-2}r_\parallel^{1/p-1}\leq \lambda_{q+1}^\alpha$ to control the first term as follows

\begin{align*}
\ell^{-1} m^{5/8}\|\curl(\axi \Vxi)\|_{C_tL^p}&\leq \ell^{-1} m^{5/8}\bigg( \|\axi\|_{C_{t,x}^1}\|\Vxi\|_{C_tL^p}+\|\axi\|_{C_{t,x}^0} \sum_{|\gamma|=1}\|D^\gamma\Vxi\|_{C_tL^p}\bigg)\\
&\lesssim  \frac{M}{4 |\Lambda|} \left( \lambda_{q+1}^{34 \alpha- 22/7}+\lambda_{q+1}^{20\alpha-15/7} \right).
\end{align*}

\noindent  \begin{tikzpicture}[baseline=(char.base)]
\node(char)[draw,fill=white,
  shape=rounded rectangle,
  drop shadow={opacity=.5,shadow xshift=0pt},
  minimum width=.8cm]
  {\Large II};
\end{tikzpicture}
A bound for the second term can be deduced in the same manner: 
\begin{align*}
&m^{1/8}\|\partial_t \curl(\axi \Vxi)\|_{C_tL^p}\\
&\hspace{.5cm}\leq m^{1/8}\big(\|\axi\|_{C_{t,x}^2}\|\Vxi\|_{C_tL^p} +\|\axi\|_{C_{t,x}^1}\|\partial_t \Vxi\|_{C_tL^p}\big)\\
&\hspace{1cm}+m^{1/8}\big(\|\axi\|_{C_{t,x}^1} \sum_{|\gamma|=1}\|D^\gamma\Vxi\|_{C_tL^p}+\|\axi\|_{C_{t,x}^0} \sum_{|\gamma|=1}\|D^\gamma\partial_t\Vxi\|_{C_tL^p}\big)\\
&\hspace{.5cm}\lesssim \frac{M}{4|\Lambda|} \left(\lambda_{q+1}^{46\alpha-22/7}+\lambda_{q+1}^{32\alpha-8/7}+\lambda_{q+1}^{32\alpha-15/7}+\lambda_{q+1}^{18\alpha-1/7} \right).
\end{align*}

\noindent \begin{tikzpicture}[baseline=(char.base)]
\node(char)[draw,fill=white,
  shape=rounded rectangle,
  drop shadow={opacity=.5,shadow xshift=0pt},
  minimum width=.8cm]
  {\Large III};
\end{tikzpicture} Thanks again to the constrained $r_\perp^{2/p-2}r_\parallel^{1/p-1}\leq \lambda_{q+1}^\alpha$ the third term can owing to \eqref{3.17a}, \eqref{3.17b} and \eqref{3.1b} be estimated as
\begin{align*}
\|w_{q+1}\|_{C_tW^{1,p}}\lesssim \frac{M}{4 |\Lambda|} \lambda_{q+1}^{60 \alpha-1/7} +\left( \frac{M}{4 |\Lambda|}\right)^2 \lambda_{q+1}^{49\alpha-2/7}.
\end{align*}

\noindent \begin{tikzpicture}[baseline=(char.base)]
\node(char)[draw,fill=white,
  shape=rounded rectangle,
  drop shadow={opacity=.5,shadow xshift=0pt},
  minimum width=.8cm]
  {\Large IV};
\end{tikzpicture}
\noindent Keeping in mind that
\begin{align}\label{3.44}
\|v \mathring{\otimes} w\|_{\text{F}}=\sqrt{\|v \otimes w\|^2_{\text{F}} -\frac{1}{3} \tr\left(v \otimes w \right)^2} \leq \|v \otimes w\|_{\text{F}}= |v|\, |w|
\end{align}
holds for any two vector fields $v$ and $w$, the fourth term admits, according to \eqref{3.3b}, \eqref{3.15a}, \eqref{3.15b}, \eqref{3.15c}, \eqref{3.1a}, \eqref{3.1b} and $r_\perp^{2/p-2}r_\parallel^{1/p-1}\leq \lambda_{q+1}^\alpha$, the inequality
\begin{align*}
m^{1/4}\|v_\ell \mathring{\otimes} w_{q+1}+w_{q+1}\mathring{\otimes} v_\ell\|_{C_tL^p}&\lesssim m^{1/4}\|v_\ell\|_{C_{t,x}^0} \|w_{q+1}\|_{C_tL^p}\\
&
\lesssim   \frac{M}{4|\Lambda|} \lambda_{q+1}^{21\alpha-8/7}+\frac{M}{4|\Lambda|}\lambda_{q+1}^{49 \alpha-10/7}+\left(\frac{M}{4|\Lambda|} \right)^2 \lambda_{q+1}^{38\alpha-9/7} .
\end{align*}
\noindent
Thus, remembering the assumptions $161 \alpha<\frac{1}{7}$, $\alpha>4 \beta b^2$, $e(t)\geq \underline{e}>4$ and $\Theta^{-2}(t)\geq m^{-1/2}$ we can finally bound the linear error as
\begin{align}\label{3.45}
\|\mathring{R}_\text{lin}\|_{C_tL^1}&\leq S\left(\frac{M}{4|\Lambda|}+\left(\frac{M}{4|\Lambda|}\right)^2 \right)   \lambda_{q+1}^{60\alpha-1/7}\\
&\leq S\left(\frac{M}{4|\Lambda|}+\left(\frac{M}{4|\Lambda|}\right)^2 \right)   \lambda_{q+1}^{-100 \alpha-4\beta b^2}\notag\\
&\leq \frac{1}{6000} \delta_{q+3}\Theta^{-2}(t)e(t) \notag,
\end{align}
provided $S\left(\frac{M}{4|\Lambda
|}+\left(\frac{M}{4|\Lambda|}\right)^2 \right) \lambda_{q+1}^{-100\alpha}\leq \frac{1}{1500m^{1/2}}$ for some constant $S>0$ (cf. Section \ref{Section 3.1}).

\subsubsection{Corrector Error}\label{Section 3.6.1.2}
We apply \eqref{3.44} and Hölder's inequality twice in order to obtain
\begin{align*}
\|R_{\text{cor}}\|_{C_tL^1}
&\leq m^{1/4}\|\wc+\wt\|_{C_tL^{2p}}\|w_{q+1}\|_{C_tL^{2p}}+m^{1/4}\|\wc+\wt\|_{C_tL^{2p}}\|\wpr\|_{C_tL^{2p}}\\
&\leq m^{1/4}\|\wc+\wt\|_{C_tL^{2p}}^2+2m^{1/4}\|\wc+\wt\|_{C_tL^{2p}}\|\wpr\|_{C_tL^{2p}}.
\end{align*}
Appealing to \eqref{3.15b}, \eqref{3.15c}, \eqref{3.15a}, \eqref{3.1b} and $r_\perp^{2/p-2}r_\parallel^{1/p-1}\leq \lambda_{q+1}^{\alpha}$ it follows
\begin{align*}
\|\wc+\wt\|^2_{C_tL^{2p}} &\lesssim \left(\frac{M}{4|\Lambda|}\right)^2  \lambda_{q+1}^{91\alpha-4/7}+\left(\frac{M}{4|\Lambda|}\right)^4\lambda_{q+1}^{69\alpha-2/7}
\end{align*}
and 
\begin{align*}
\|\wc+\wt\|_{C_tL^{2p}} \|\wpr\|_{C_tL^{2p}}\lesssim \left(\frac{M}{4|\Lambda|}\right)^2  \lambda_{q+1}^{63\alpha-2/7}+\left(\frac{M}{4|\Lambda|}\right)^3\lambda_{q+1}^{52\alpha-1/7}.
\end{align*}
That means there exist a constant $\widetilde{S}>0$ such that
\begin{align}\label{3.46}
\|\mathring{R}_\text{cor}\|_{C_tL^1}&\leq \widetilde{S}\left(\left(\frac{M}{4|\Lambda|}\right)^2+\left(\frac{M}{4|\Lambda|}\right)^3+\left(\frac{M}{4|\Lambda|}\right)^4 \right)\lambda_{q+1}^{92\alpha-1/7}\\
&\leq\widetilde{S}\left(\left(\frac{M}{4|\Lambda|}\right)^2+\left(\frac{M}{4|\Lambda|}\right)^3+\left(\frac{M}{4|\Lambda|}\right)^4 \right)\lambda_{q+1}^{-69\alpha} \notag, 
\end{align} 
where we also took $m^{1/4}\leq \lambda_{q+1}^\alpha$ and $161\alpha<\frac{1}{7}$ into account. \\
In view of our choice of parameters (cf. Section \ref{Section 3.1}) it holds 
\begin{align*}
\widetilde{S}\left(\left(\frac{M}{4|\Lambda|}\right)^2+\left(\frac{M}{4|\Lambda|}\right)^3+\left(\frac{M}{4|\Lambda|}\right)^4 \right)\lambda_{q+1}^{-68 \alpha}\leq \frac{1}{1500m^{1/2}},
\end{align*}
so that together with $\alpha>4\beta b^2$, $e(t)\geq \underline{e}> 4$ and $\Theta^{-2}(t)\geq m^{-1/2}$ again we accomplish
\begin{align*}
\|\mathring{R}_\text{cor}\|_{C_tL^1}\leq  \frac{1}{1500m^{1/2}} \lambda_{q+1}^{-4\beta b^2}\leq \frac{1}{6000} \delta_{q+3}\Theta^{-2}(t)e(t) \notag.
\end{align*}

\subsubsection{Oscillation Error} \label{Section 3.6.1.3}
To control the first term of the oscillation error, we intend to apply \thref{Lemma A.4}. Thanks to Leibniz's formula, \eqref{3.8b} and \eqref{3.1b} we accomplish
\begin{align*}
\sum_{|\alpha|=n} \|D^\alpha (\nabla \axi^2)\|_{C_tL^\infty}& \lesssim \sum_{|\alpha|=n+1}  \sum_{\beta \leq \alpha} \|D^\beta \axi \|_{C_tL^\infty}\|D^{\alpha-\beta}\axi\|_{C_tL^\infty}\\
& \lesssim \sum_{k \leq n+1} \|\axi\|_{C^{k}_{t,x}} \|\axi\|_{C^{n+1-k}_{t,x}}\\
&\lesssim \left(\frac{M}{4|\Lambda|}\right)^2 \ell^{-24}\ell^{-7n}
\end{align*}
for any $n\geq 0$ and owing to the constraints $161\alpha<\frac{1}{7}$, $a\geq 3600$, $b\geq 7$ and \eqref{3.1b} also 
\begin{align*}
\ell^{-7}&\leq  \lambda_{q+1}^{14\alpha} <  \lambda_{q+1}^{-3/23} \lambda_{q+1}^{1/7} \leq  3600^{-7 \cdot \frac{3}{23}}r_\perp\lambda_{q+1} \leq  \frac{r_\perp\lambda_{q+1}}{2}
\intertext{and}
\ell^{-7N}& \leq (\lambda_{q+1}^{14 \alpha})^{ N}\leq  \lambda_{q+1}^{3/7-\frac{3}{23} N} \lambda_{q+1}^{\frac{1}{7}(N-3)}\\ &\leq  3600^{7 \cdot (3/7-\frac{3}{23} N)} (r_\perp  \lambda_{q+1})^{N-3} \leq \left(\frac{r_\perp \lambda_{q+1}}{2}\right)^{N-3}  ,
\end{align*}
as long as $\frac{3 \ln (2^{-1}\cdot 3600)}{\ln(2^{-1}\cdot 3600^{21/23})}<4 \leq N$. Therefore all requirements of \thref{Lemma A.4} are fulfilled and together with \eqref{3.1b}, \eqref{3.44} and \eqref{3.13c} it teaches us 
\begin{align*}
\Big\|\sum_{\xi\in \Lambda} \mathcal{R}\mathbb{P}_{\neq 0}\big[ \mathbb{P}_{\geq \frac{r_\perp \lambda_{q+1}}{2}}(\Wxi \otimes \Wxi)\nabla \axi^2\big] \Big\|_{C_tL^p}&\lesssim \left(\frac{M}{4|\Lambda|}\right)^2\lambda_{q+1}^{48 \alpha} \frac{\|\Wxi \otimes \Wxi\|_{C_tL^p}}{r_\perp \lambda_{q+1}}\\
&= \left(\frac{M}{4|\Lambda|}\right)^2\lambda_{q+1}^{48 \alpha} \|\Wxi\|^2_{C_tL^{2p}}r_\perp^{-1}\lambda_{q+1}^{-1}\\
&\lesssim \left(\frac{M}{4|\Lambda|}\right)^2\lambda_{q+1}^{49 \alpha-1/7},
\end{align*}
where we make again use of $r_\perp^{2/p-2}r_\parallel^{1/p-1}\leq \lambda_{q+1}^\alpha$ in the last step.\\
The second term of the oscillation error can, according to the boundedness of $\mathcal{R}\mathbb{P}_{\neq 0}$ on \linebreak $\left(C^\infty\left( \mathbb{T}^3; \mathbb{R}^3\right),\|\cdot\|_{L^p}\right)$, \thref{Lemma 3.4}, \eqref{3.8b}, \eqref{3.13a}, \eqref{3.13b}, \eqref{3.1b} and $r_\perp^{2/p-2}r_\parallel^{1/p-1}\leq \lambda_{q+1}^\alpha$ be estimated as
\begin{align*}
&\Big\| \mu^{-1} \sum_{\xi \in \Lambda} \mathcal{R} \mathbb{P}_{\neq 0} \big[(\partial_t \axi^2) \psi_{(\xi)}^2 \phi_{(\xi)}^2 \xi \big] \Big\|_{C_tL^p}\\ &\hspace{.5cm}\lesssim \mu^{-1} \sum_{\xi \in \Lambda} \|\partial_t \axi\|_{C_tL^\infty}\| \axi\|_{C_tL^\infty} \|\psi_{(\xi)}^2 \phi_{(\xi)}^2\|_{C_tL^p}\\
&\hspace{.5cm}\lesssim \mu^{-1} \sum_{\xi \in \Lambda} \| \axi\|_{C_{t,x}^1}\| \axi\|_{C_{t,x}^0} \|\psi_{(\xi)}\|^2_{C_tL^{2p}} \| \phi_{(\xi)}\|^2_{L^{2p}}\\
&\hspace{.5cm}\lesssim \left( \frac{M}{4|\Lambda|}\right)^2  \lambda_{q+1}^{49\alpha-9/7}.
\end{align*}
Under the assumptions $161\alpha<\frac{1}{7}$ and $\alpha>4\beta b^2$, we therefore get by Hölder's inequality
\begin{align}\label{3.47}
\|\mathring{R}_\text{osc}\|_{C_tL^1}\leq \hat{S}\left( \frac{M}{4|\Lambda|}\right)^2  \lambda_{q+1}^{49 \alpha-1/7} \leq \hat{S}\left( \frac{M}{4|\Lambda|}\right)^2  \lambda_{q+1}^{-111 \alpha}\lambda_{q+1}^{-2\beta b^2}
\end{align}
for some $\hat{S}>0$. To absorb this implicit universal constant, we impose $\hat{S}\left( \frac{M}{4|\Lambda|}\right)^2  \lambda_{q+1}^{-111 \alpha}\leq$ $\frac{1}{1500m^{1/2}}$ by possibly increasing $a$. Finally $e(t)\geq \underline{e}> 4$ and $\Theta^{-2}(t)\geq m^{-1/2}$ entail
\begin{align*}
\|\mathring{R}_\text{osc}\|_{C_tL^1}\leq \frac{1}{1500m^{1/2}}\delta_{q+3}\leq \frac{1}{6000} \delta_{q+3}\Theta^{-2}(t) e(t).
\end{align*}

 \subsubsection{Commutator Error}\label{Section 3.6.1.4}
We will estimate each term of 
\begin{align} \label{3.48}
\begin{split}
\|\mathring{R}_\text{com}\|_{C_tL^1} &\leq \underbrace{\|(\Theta-\Theta_\ell) v_{q+1} \mathring{\otimes} v_{q+1}\|_{C_tL^1}}_{=:\text{I}}+\underbrace{\|\Theta_\ell v_\ell \mathring{\otimes} v_\ell-\Theta_\ell v_q \mathring{\otimes} v_\ell \|_{C_tL^1}}_{=:\text{II}}\\
&\hspace{.5cm}+\underbrace{\|\Theta_\ell v_q \mathring{\otimes} v_\ell -\Theta v_q \mathring{\otimes}v_\ell\|_{C_tL^1}}_{_{=:\text{III}}}+\underbrace{\|\Theta v_q \mathring{\otimes}v_\ell-\Theta v_q \mathring{\otimes}v_q \|_{C_tL^1}}_{_{=:\text{IV}}}\\
&\hspace{.5cm}+\underbrace{\|\Theta v_q \mathring{\otimes}v_q-(\Theta v_q \mathring{\otimes}v_q )\ast_t \varphi_\ell \ast_x \phi_\ell\|_{C_tL^1}}_{_{=:\text{V}}}
\end{split}
\end{align}
separately.

\noindent \begin{tikzpicture}[baseline=(char.base)]
\node(char)[draw,fill=white,
  shape=rounded rectangle,
  drop shadow={opacity=.5,shadow xshift=0pt},
  minimum width=.8cm]
  {\Large I};
\end{tikzpicture}
To find a bound for $\|\Theta- \Theta_\ell \|_{C_t}$ we proceed in a similar  way as in \eqref{3.36} by using Itô's formula. More precisely we find
\begin{align} \label{3.49}
\|\Theta- \Theta_\ell \|_{C_t} \leq \frac{3}{2}m^{1/4} \ell^\iota.
\end{align} 
Together with \eqref{3.44} it furnishes
\begin{align*}
\text{I} & \leq \|\Theta- \Theta_\ell \|_{C_t} \sup_{s \in (-\infty,t]} \int_{\mathbb{T}^3} |v_{q+1}(s,x)|^2\, dx \leq \frac{3}{2}(2\pi)^{3/2}m^{1/4} \ell^\iota \|v_{q+1}\|_{C_{t,x}^1} \|v_{q+1}\|_{C_tL^2}.
\end{align*}

\noindent \begin{tikzpicture}[baseline=(char.base)]
\node(char)[draw,fill=white,
  shape=rounded rectangle,
  drop shadow={opacity=.5,shadow xshift=0pt},
  minimum width=.8cm]
  {\Large II \& IV};
\end{tikzpicture}
Keeping Cauchy-Schwarz's inequality in mind, we combine \eqref{3.44} with \eqref{3.6a} and \eqref{3.6b} to find 
\begin{align*}
\text{II} &\leq m^{1/4} \sup_{s \in (-\infty,t]} \int_{\mathbb{T}^3} |(v_\ell-v_q)(s,x)|\, |v_\ell (s,x) |\,dx
\leq m^{1/4} \|v_\ell-v_q\|_{C_tL^2}\|v_\ell\|_{C_tL^2}\\
& \leq (2\pi)^{3/2}m^{1/4} \ell \|v_q\|_{C^1_{t,x}} \|v_q\|_{C_tL^2}. 
\end{align*}
Note that the final bound is also valid for IV.\\

\noindent \begin{tikzpicture}[baseline=(char.base)]
\node(char)[draw,fill=white,
  shape=rounded rectangle,
  drop shadow={opacity=.5,shadow xshift=0pt},
  minimum width=.8cm]
  {\Large III};
\end{tikzpicture}
Moreover, we invoke \eqref{3.49}, \eqref{3.44} and \eqref{3.6b} to get 
\begin{align*}
\text{III} 
\leq \|\Theta_\ell- \Theta\|_{C_t}\sup_{s\in(-\infty,t]} \int_{\mathbb{T}^3} |v_q(s,x)|\, |v_\ell(s,x)| \,dx \leq \frac{3}{2}(2\pi)^{3/2}m^{1/4} \ell^\iota \|v_q\|_{C_{t,x}^1}\|v_q\|_{C_tL^2}.
\end{align*}

\noindent \begin{tikzpicture}[baseline=(char.base)]
\node(char)[draw,fill=white,
  shape=rounded rectangle,
  drop shadow={opacity=.5,shadow xshift=0pt},
  minimum width=.8cm]
  {\Large V};
\end{tikzpicture}
Thanks to the normalizations of mollifiers we have 
\begin{align*}
\text{V} 
&\leq \underbrace{\sup_{s \in (-\infty,t]}  \sup_{u \in [0,\ell]} \int_{\mathbb{T}^3}\|\Theta(s)(v_q \mathring{\otimes}v_q)(s,x)-\Theta(s-u)(v_q \mathring{\otimes}v_q)(s-u,x)\|_F \, dx }_{=:V_1}\\
&\hspace{.5cm}+\underbrace{\sup_{s \in (-\infty,t]}  \sup_{u \in [0,\ell]} \sup_{|y|\leq \ell }\int_{\mathbb{T}^3}\|\Theta(s-u)(v_q \mathring{\otimes}v_q)(s-u,x)-\Theta(s-u)(v_q \mathring{\otimes}v_q)(s-u,x-y)\|_F \, dx }_{=:V_2},
\end{align*}
where $V_1$ boils owing to \eqref{3.44} and Cauchy-Schwarz's inequality down to 
\begin{align*}
\text{V}_1 &\leq \sup_{s \in (-\infty,t]}  \sup_{u \in [0,\ell]} \int_{\mathbb{T}^3}\big\|[\Theta(s)v_q(s,x)- \Theta (s-u)v_q(s-u,x)] \mathring{\otimes}v_q(s,x)\big\|_F \, dx\\
&\hspace{.5cm}+\sup_{s \in (-\infty,t]}  \sup_{u \in [0,\ell]} \int_{\mathbb{T}^3}\big\| \Theta (s-u)v_q(s-u,x) \mathring{\otimes}[v_q(s,x)-v_q(s-u,x)]\big\|_F \, dx\\
&\leq \sup_{s \in (-\infty,t]}  \sup_{u \in [0,\ell]} \int_{\mathbb{T}^3} \underbrace{|\Theta(s)v_q(s,x)- \Theta (s-u)v_q(s-u,x)|}_{\leq 3m^{1/4} \ell^\iota \|v_q\|_{C_{t,x}^1} \, (\text{\ding{93}})} \,|v_q(s,x)| \, dx\\ 
&\hspace{.5cm}+\sup_{s \in (-\infty,t]}  \sup_{u \in [0,\ell]} \int_{\mathbb{T}^3} | \Theta (s-u)v_q(s-u,x)|\, |v_q(s,x)-v_q(s-u,x)| \, dx\\
&\leq 3m^{1/4} \ell^\iota\|v_q\|_{C_{t,x}^1} (2\pi)^{3/2} \|v_q\|_{C_tL^2}\\
&\hspace{.5cm}+m^{1/4} \|v_q\|_{C_tL^2}\sup_{s \in (-\infty,t]}  \sup_{u \in [0,\ell]} \left(\int_{\mathbb{T}^3} |v_q(s,x)-v_q(s-u,x)|^2 \, dx \right)^{1/2}\\
&= 3(2\pi)^{3/2}m^{1/4} \ell^\iota \|v_q\|_{C_{t,x}^1}  \|v_q\|_{C_tL^2}+m^{1/4} \|v_q\|_{C_tL^2}\sup_{s \in (-\infty,t]}  \sup_{u \in [0,\ell]} \bigg(\int_{\mathbb{T}^3} \Big| \int_0^1 \underbrace{\partial_\varsigma v_q(s-\varsigma u,x)}_{\substack{=-u\partial_s v_q(s-\varsigma u,x)\\ \hspace{-.9cm}\leq \ell \|v_q\|_{C_{t,x}^1}}}\, d\varsigma\Big|^2 \, dx \bigg)^{1/2}\\
&\leq 4(2\pi)^{3/2}m^{1/4} \ell^\iota\|v_q\|_{C_{t,x}^1}  \|v_q\|_{C_tL^2}.
\end{align*}
Here (\ding{93}) follows similarly to \eqref{3.36}, this time however with $e(s)$ replaced by $v_q(s,x)$ and $\bar{e}$, $\widetilde{e}$ by $\|v_q\|_{C_{t,x}^1}$.\\
One more time we employ \eqref{3.44} and stick to standard mollification estimates in order to control V$_2$ as follows 
\begin{align*}
\text{V}_2 &\leq m^{1/4} \sup_{s \in (-\infty,t]} \sup_{|y|\leq \ell }\int_{\mathbb{T}^3}\big\|[v_q(s,x)-v_q(s,x-y)] \mathring{\otimes}v_q(s,x)\big\|_F \, dx \\
&\hspace{.5cm}+ m^{1/4} \sup_{s \in (-\infty,t]}  \sup_{|y|\leq \ell }\int_{\mathbb{T}^3}\big\|v_q(s,x-y) \mathring{\otimes}[v_q(s,x)-v_q(s,x-y)]\big\|_F \, dx \\
&\leq m^{1/4} \sup_{s \in (-\infty,t]}   \sup_{|y|\leq \ell }\int_{\mathbb{T}^3} |v_q(s,x)-v_q(s,x-y)| \, |v_q(s,x)| \, dx \\
&\hspace{.5cm}+ m^{1/4} \sup_{s \in (-\infty,t]}   \sup_{|y|\leq \ell }\int_{\mathbb{T}^3}|v_q(s,x-y)|\,|v_q(s,x)-v_q(s,x-y)| \, dx \\
& \leq 2m^{1/4} \|v_q\|_{C_tL^2}\sup_{s \in (-\infty,t]}  \sup_{|y|\leq \ell }\left(\int_{\mathbb{T}^3} |v_q(s,x)-v_q(s,x-y)|^2 \, dx \right)^{1/2}\\
&= 2m^{1/4} \|v_q\|_{C_tL^2}\sup_{s \in (-\infty,t]}  \sup_{|y|\leq \ell }\left(\int_{\mathbb{T}^3} \Big| \int_0^1 \partial_\varsigma v_q(s,x-\varsigma y) \, d\varsigma\Big|^2 \, dx \right)^{1/2}\\
&= 2m^{1/4} \|v_q\|_{C_tL^2}\sup_{s \in (-\infty,t]}  \sup_{|y|\leq \ell }\left(\int_{\mathbb{T}^3} \Big| \int_0^1 \sum_{i=1}^3 y_i \partial_{x_i} v_q(s,x-\varsigma y) \, d\varsigma\Big|^2 \, dx \right)^{1/2}\\
&\leq 2 (2\pi)^{3/2}m^{1/4} \ell \|v_q\|_{C_{t,x}^1} \|v_q\|_{C_tL^2}.
\end{align*}
Thence 
\begin{align*}
\text{V}\leq 6 (2\pi)^{3/2} m^{1/4}\ell^\iota \|v_q\|_{C_{t,x}^1} \|v_q\|_{C_tL^2}.
\end{align*}
Plugging the above bounds into \eqref{3.48} and taking \eqref{3.3b}, \eqref{3.3a}, \eqref{3.4} and \eqref{3.1a} into account, leads to 
\begin{align*}
\|R_{\text{com}}\|_{C_tL^1} &\leq (2\pi)^{3/2}m^{1/4} \ell^\iota \left(\frac{3}{2}\|v_{q+1}\|_{C_{t,x}^1} \|v_{q+1}\|_{C_tL^2}+\frac{19}{2}\|v_q\|_{C_{t,x}^1} \|v_q\|_{C_tL^2} \right)\\
& \leq 33 (2\pi)^{3/2}M_0 m^{9/4}  \bar{e}  \lambda_{q+1}^{-\alpha \iota}.
\end{align*}
Finally, combining $33 (2\pi)^{3/2}M_0 m^{9/4} \bar{e}  \lambda_{q+1}^{-\frac{\alpha \iota}{2}} \leq \frac{1}{1500m^{1/2}}$ with $\alpha \iota >4\beta b^2$, $e(t)\geq \underline{e}>4$ and $\Theta^{-2}\geq m^{-1/2}$ permits to achieve 
\begin{align*}
\|R_{\text{com}}\|_{C_tL^1} \leq \frac{1}{1500m^{1/2}} \lambda_{q+1}^{-2\beta b^2} \leq \frac{1}{6000} \delta_{q+3} \Theta^{-2}(t)e(t).
\end{align*}
\newline
\newline
So altogether this proves \eqref{3.3c} at level $q+1$.

\subsection{Convergence of the Sequence}\label{Section 3.6.2}
Furthermore note that it follows instantly from \eqref{3.3c} that $\big(\mathring{R}_q \big)_{q \in \mathbb{N}_0}$ is a zero sequence in $C\left((-\infty,\tau]; L^1\left( \mathbb{T}^3\right) \right)$.

\section{Adapted and Deterministic}\label{Section 3.7}

\begin{figure}[H]
\begin{center}
\includegraphics[scale=0.62]{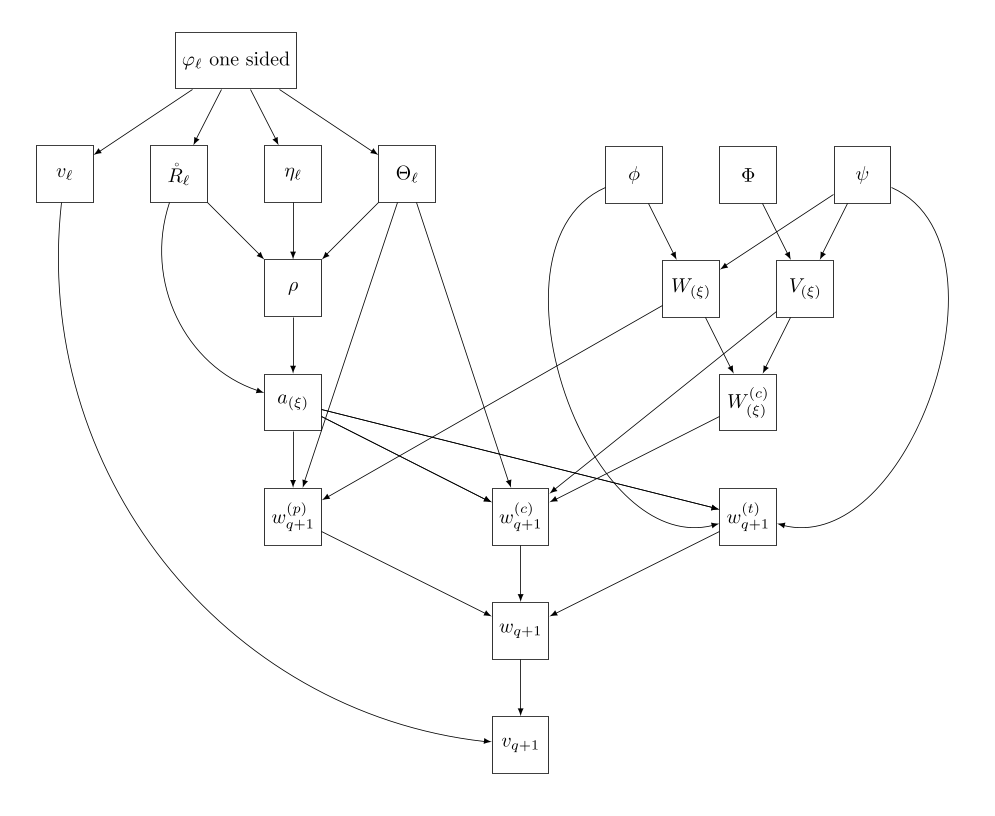}
\caption{Implication scheme of the adaptedness of each process up to the desired iteration step $v_{q+1}$.}
\label{Figure 3.6}
\end{center}
\end{figure}

\subsection{Adapted} \label{Section 3.7.1}
Now, we aim at proving the adaptedness of the next iteration step $(v_{q+1},\mathring{R}_{q+1})$. As already mentioned in Section \ref{Section 3.3.1} we emphasize again that the mollified velocity field $v_\ell$ as well as $\mathring{R}_\ell$ and $\Theta_\ell$ remain $\left(\mathcal{F}_t\right)_{t\in (-\infty, \tau]}$-adapted. Since the energy $e$ is deterministic, the adaptedness of $\eta_\ell$ follows in the same way as for $v_\ell$.\\
These facts in turn yield the adaptedness of $\rho$ and thus also of the amplitude functions $\axi$ as composition of them.\\
Furthermore note, that the building blocks $\phi,\, \Phi$ and $\psi$ of the intermittent jets do not depend on $\omega$, so consequently $\Vxi,\, \Wcxi$ and the intermittent jets themselves are deterministic as well. Together with the fact that any partial derivative of $\axi$ in space remains $\left(\mathcal{F}_t\right)_{t \in (-\infty, \tau]}$-adapted, establishes the adaptedness of $\wpr$ and $\wc$. Moreover, the Leray projection $\mathbb{P}$ and obviously also the projection $\mathbb{P}_{\neq 0}$ onto functions with zero mean preserve adaptedness, which confirms the adaptedness of the temporal corrector $\wt$.\\
To summarize our above considerations: we have proven that $v_\ell$ and the total perturbation $w_{q+1}=\wpr+\wc+\wt$ are, at any time $t \in (-\infty, \tau]$, $\mathcal{F}_t$-measurable, which finally verifies the adaptedness of $v_{q+1}=v_\ell+w_{q+1}$ and $\mathring{R}_{q+1}$.

 \subsection{Deterministic} \label{Section 3.7.2}
We will use an induction argument to verify, that the constructed sequence $\big(v_q(t,x),\mathring{R}_q(t,x)\big)_{q\in \mathbb{N}_0}$ is deterministic for each $t\leq 0$ and $x \in \mathbb{T}^3$.\\ Obviously the starting point $\big(v_0,\mathring{R}_0 \big)=(0,0)$ does not depend on $\omega$ at any time.\\
So assuming $v_q(t,x)$ for all $t\leq 0$ and $x \in \mathbb{T}^3$ to be deterministic, it can be easily seen that 
\begin{align*}
v_\ell(t,x,\omega)=\int_0^\ell \int_{|y|\leq \ell}v_q(t-s,x-y,\omega)\varphi_\ell(s) \phi_\ell(y)\,dy \,ds,
\end{align*}
$\mathring{R}_\ell(t,x)$ and $\Theta_\ell(t)$ are deterministic as well. Combined with the fact, that $\eta_q(t,\cdot)$ is deterministic, $\rho(t,\cdot)$ and hence $\axi(t,\cdot)$ does not depend on $\omega$ either. Based on this we are able to conclude that each part of the total perturbation $w_{q+1}(t,\cdot)$, is deterministic, that is to say $\wpr,\, \wc$ and $\wt$ do not depend on $\omega$ at each time up to time $0$. The functions $\phi,\,\Phi$ and $\psi$ and hence the intermittent jets $W_{(\xi)}$ as well as its incompressibility corrector $W^{(c)}_{(\xi)}$ and $V_{(\xi)}$ are by definition deterministic. \\
On the one hand, this together with the independence of $v_\ell(t,\cdot)$ of $\omega$ results in the deterministic behavior of $v_{q+1}$ at time $t\leq 0$. On the other hand it also follows that $\mathring{R}_\text{lin}(t,\cdot)$, $\mathring{R}_\text{cor}(t,\cdot)$, $\mathring{R}_\text{osc}(t,\cdot)$, $\mathring{R}_\text{com}(t,\cdot)$ and thereby $\mathring{R}_{q+1}(t,\cdot)$ are deterministic as well, yielding the assertion.

\chapter{End of the Proof of Theorem 1.1} \label{Chapter 4}
Summarizing our previous results, we formulate the following Proposition.

\begin{prop}[Main iteration] \thlabel{Proposition 4.1}
For an $(\mathcal{F}_t)_{t\in (-\infty,\tau]}$-adapted solution $\big(v_q,\mathring{R}_q\big)$ to \eqref{3.2} on $(-\infty,\tau]$, which admits the bounds in \eqref{3.3} and \eqref{3.5} for some $q\in \mathbb{N}_0$, an  $\left(\mathcal{F}_t\right)_{t \in (-\infty,\tau]}$-adapted process $\big(v_{q+1},\mathring{R}_{q+1}\big)$ can be constructed, so that this pair also solves \eqref{3.2} on $(-\infty,\tau]$ and obeys \eqref{3.3} and \eqref{3.5} at level $q+1$. Moreover the sequences   $\left(v_q\right)_{q\in \mathbb{N}_0}$ and $\big(\mathring{R}_q\big)_{q\in \mathbb{N}_0}$ are Cauchy in $C\left((-\infty,\tau];L^2\left( \mathbb{T}^3 \right)\right)$ and $C\left((-\infty,\tau];L^1\left( \mathbb{T}^3 \right)\right)$, respectively; more precisely $\big(\mathring{R}_q\big)_{q\in \mathbb{N}_0}$ converges to zero and \eqref{3.42} holds.\\ Additionally $v_q(t,x)$ and $\mathring{R}_q(t,x)$ are deterministic for all $t\leq 0,\, x \in \mathbb{T}^3$ and $q\in \mathbb{N}_0$.
\end{prop}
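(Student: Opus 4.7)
The plan is essentially to collect and assemble the construction and estimates worked out across Sections \ref{Section 3.3}--\ref{Section 3.7}, since Proposition \ref{Proposition 4.1} is the distillation of that work. Given an adapted pair $(v_q,\mathring{R}_q)$ satisfying \eqref{3.3} and \eqref{3.5}, I would first mollify in time by $\varphi_\ell$ and in space by $\phi_\ell$ as in Section \ref{Section 3.3.1} to obtain $v_\ell,\,\mathring{R}_\ell,\,\Theta_\ell$, for which the $(\mathcal F_t)$-adaptedness is preserved thanks to $\supp \varphi \subset \mathbb{R}^+$. Then I would build $\rho$, $\eta_\ell$ and the amplitude functions $a_{(\xi)}$ as in Section \ref{Section 3.3.2.1}, verify via \eqref{3.5} and the parameter choice $a^{\beta b}\geq 2$, $b\geq 7$ that $\rho\geq 2\ell$ and $\rho\geq 2\|\mathring{R}_\ell\|_F$ so that the geometric \thref{Lemma 3.2} applies, and use the intermittent jets from Section \ref{Section 3.3.2.2} to define $\wpr,\wc,\wt$. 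Setting $v_{q+1}:=v_\ell+w_{q+1}$ with $w_{q+1}=\wpr+\wc+\wt$ and decomposing $\mathring{R}_{q+1}=\mathring{R}_{\text{lin}}+\mathring{R}_{\text{cor}}+\mathring{R}_{\text{osc}}+\mathring{R}_{\text{com}}$ as in Section \ref{Section 3.5} produces an adapted pair that solves \eqref{3.2} at level $q+1$ by construction.

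Next I would verify that the inductive bounds pass to level $q+1$. The estimates \eqref{3.3a} and \eqref{3.3b} are obtained in Section \ref{Section 3.4.2} by plugging \thref{Lemma 3.1} and \thref{Lemma 3.6} into the triangle inequality and absorbing constants via the parameter constraints of Section \ref{Section 3.1}, in particular $K\tfrac{M}{4|\Lambda|}\lambda_{q+1}^{-12/7}+K\tfrac{M}{4|\Lambda|}\lambda_{q+1}^{-2}+K(\tfrac{M}{4|\Lambda|})^2\lambda_{q+1}^{-6/7}\leq\tfrac12$ and $\widetilde K\lambda_q^5\lambda_{q+1}^{-5}\leq\tfrac12$, together with the definition $M_0:=3C\tfrac{M}{4|\Lambda|}$. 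The energy approximation \eqref{3.5} at level $q+1$ is the core analytic content; following Section \ref{Section 3.4.3} I would split the error into the five terms $\mathrm{I}$--$\mathrm{V}$ of \eqref{3.30}, use the identity \eqref{3.31} to extract a main cancellation $|\wpr|^2-3\eta_q = 6\Theta_\ell^{-1}\sqrt{\ell^2+\|\mathring R_\ell\|_F^2}+3(\eta_\ell-\eta_q)+\Theta_\ell^{-1}\sum_\xi a_{(\xi)}^2\mathbb P_{\neq 0}|\Wxi|^2$, and bound the oscillatory remainder via repeated integration by parts (giving \eqref{3.38}) and the mollification differences \eqref{3.36} via It\^o's formula against the H\"older regularity of $B$. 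For the Reynolds stress \eqref{3.3c} I would run through the four subsections of Section \ref{Section 3.6}, using the inverse divergence \thref{Lemma 3.8}, \thref{Lemma 3.9}, and \thref{Lemma A.4} for $\mathring R_{\text{osc}}$, and in each case absorbing constants by the parameter assumptions from Section \ref{Section 3.1}.

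For convergence, \eqref{3.27} combined with \eqref{3.6a} gives \eqref{3.42}, and summing the geometric series using $a^{\beta b}\geq 2$ yields a Cauchy sequence $(v_q)$ in $C((-\infty,\tau];L^2(\mathbb T^3))$; the Cauchy property of $(\mathring R_q)$ in $C((-\infty,\tau];L^1(\mathbb T^3))$ and its convergence to zero are immediate from \eqref{3.3c} since $\delta_{q+2}\to 0$. Adaptedness at level $q+1$ follows exactly as in Section \ref{Section 3.7.1} by tracing the measurability through each operation (mollification, composition with $\gamma_\xi$, multiplication by deterministic intermittent jets, and the Leray / zero-mean projections); the deterministic character on $\{t\leq 0\}$ is a straightforward induction as in Section \ref{Section 3.7.2}, using that the starting pair is zero and that mollifiers with $\supp \varphi_\ell\subset[0,\ell]$ only look forward in time from times $\leq 0$ into the not-yet-random past.

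The main obstacle, and the only place where I expect genuine difficulty rather than bookkeeping, is the energy control \eqref{3.5} at level $q+1$. The budget $\tfrac14\delta_{q+2}\Theta^{-2}e$ is tight, it must absorb a commutator-in-time term (bounded via It\^o's formula and the H\"older regularity of $B$, which forces the requirement $\iota>\tfrac13$ through the constraint $33(2\pi)^{3/2}M_0 m^{9/4}(\bar e+\widetilde e)\lambda_{q+1}^{-\alpha(\tfrac32\iota-\tfrac12)}\leq \tfrac{1}{1500 m^{1/2}}$), a low-frequency oscillation error on $\sum_\xi\int a_{(\xi)}^2\mathbb P_{\neq 0}|\Wxi|^2\,dx$ handled by iterated integration by parts and \thref{Lemma 2.4}, and cross-terms $v_\ell\cdot\wpr$ that do not vanish and must be dominated by choosing $\alpha$ small and $\lambda_{q+1}$ large. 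Keeping all the universal constants $S,\widetilde S,\hat S,K^\ast,K',K''$ independent of $q$ and consistent with the placement of the $\Theta_\ell$-factors in the definitions of $\rho$, $\eta_q$, $\wpr$ and $\wc$ (whose necessity is exactly the content of \eqref{3.14} and \eqref{3.31}) is what makes the argument work; essentially every parameter inequality in Section \ref{Section 3.1} is consumed at this step.
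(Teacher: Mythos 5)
Your proposal correctly reproduces the paper's own proof, which is precisely the assembly of Sections \ref{Section 3.3}--\ref{Section 3.7}: mollification (\ref{Section 3.3.1}), amplitude functions and intermittent jets (\ref{Section 3.3.2}), the inductive bounds (\ref{Section 3.4}), the decomposition of $\mathring{R}_{q+1}$ (\ref{Section 3.5}), the Reynolds estimates (\ref{Section 3.6}), and the adaptedness/deterministic arguments (\ref{Section 3.7}). Your identification of the energy control \eqref{3.5} as the genuinely delicate step, and your articulation of the cancellation coming from \eqref{3.31}, the role of the $\Theta_\ell$-factors, and the parameter constraints (including the source of $\iota>\tfrac13$), all match the paper's reasoning.
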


\vspace{.5cm}
\noindent
Now we have everything in hand to finish the proof of \thref{Theorem 1.2}.
\begin{proof}[Proof of \thref{Theorem 1.2}]
\item \underline{Existence:}~\\
According to Proposition \ref{Proposition 4.1} there exists a sequence $\big(v_q,\mathring{R}_q \big)_{q\in \mathbb{N}_0}$, so that $\big(v_q(t),\mathring{R}_q(t) \big)$ is for every $q\in \mathbb{N}_0 $ and $t\leq 0$ deterministic and so that $\big(v_q, \mathring{R}_q \big)$ solves \eqref{3.2} on $(-\infty,\tau]$ in the weak sense. That means for any divergence free test function $\varphi \in C^\infty\left(\mathbb{T}^3;\mathbb{
R}^3\right)$ the pair $\big(v_q,\mathring{R}_q \big)$ satisfies  \pagebreak
\begin{align} \label{4.1}
&\underbrace{\int_{\mathbb{T}^3}\Big(v_q(t,x)-v_q(0,x)\Big)\cdot\varphi(x) \,dx}_{=:\text{I}}+\frac{1}{2}\underbrace{\int_0^t\int_{\mathbb{T}^3}v_q(s,x)\cdot\varphi(x)\,dx\,ds}_{=:\text{II}}-\underbrace{\int_0^t \int_{\mathbb{T}^3} v_q(s,x)\cdot \Delta\varphi(x)\,dx\,ds}_{=:\text{III}}\notag \\
&\hspace{.3cm}+\underbrace{\int_0^t \Theta(s)\int_{\mathbb{
T}^3}(v_q \otimes v_q)(s,x):\nabla \varphi^T(x)\,dx \,ds}_{=: \text{IV}}+\underbrace{\int_0^t\int_{\mathbb{T}^3}  p_q(s,x)\cdot\divs(\varphi(x))\, dx \,ds}_{=:\text{V}}  \\
&= \underbrace{\int_0^t \int_{\mathbb{T}^3} \mathring{R}_q(s,x): \nabla \varphi^T(x) \,dx \,ds}_{=:\text{VI}} \notag
\end{align}
in particular for every $t\in [0,\tau]$, where the fifth term vanishes due to the fact that we work with solenoidal test functions.\\
Moreover the sequence can be chosen in such a way that $\big(\mathring{R}_q\big)_{q \in \mathbb{N}_0}$ converges to zero in \linebreak $C\left((-\infty,\tau] ;L^1\left(\mathbb{T}^3\right)\right)$ and such that $(v_q)_{q\in \mathbb{N}_0}$ is Cauchy in the Banach space $C\left((-\infty,\tau];L^2\left( \mathbb{T}^3\right)\right)$. Integration by parts and Cauchy-Schwarz's inequality permits therefore to deduce

\noindent \begin{tikzpicture}[baseline=(char.base)]
\node(char)[draw,fill=white,
  shape=rounded rectangle,
  drop shadow={opacity=.5,shadow xshift=0pt},
  minimum width=.8cm]
  {\Large VI};
\end{tikzpicture} 

\begin{align*}
&\bigg|\int_0^t \int_{\mathbb{T}^3} \mathring{R}_q(s,x): \nabla \varphi^T(x) \,dx \,ds \bigg|\\
 &\hspace{.5cm}\leq \int_0^t\int_{\mathbb{T}^3} \left( \sum_{i,j=1}^3\big|  \big(\mathring{R}_q(s,x) \big)_{i,j} \big|^2\right)^{1/2} \, \left(\sum_{i,j=1}^3 \big|\partial_{ x_j} \varphi_i(x)\big|^2\right)^{1/2} \,dx \,ds \\
&\hspace{.5cm}\leq \sqrt{3}\int_0^t\int_{\mathbb{T}^3} \|\mathring{R}_q(s,x)\|_F\|\varphi\|_{C^1} \,dx \,ds\\
&\hspace{.5cm}\leq \sqrt{3} \|\varphi\|_{C^1}   \|\mathring{R}_q\|_{C_tL^1} \underbrace{\int_0^t\,ds}_{\leq \tau \leq 1} \overset{q \to \infty}{\longrightarrow} 0.
\end{align*}
Furthermore we may assume the existence of a limit $\lim\limits_{q \to \infty}v_q=:v \in C\left((-\infty,\tau];L^2\left( \mathbb{T}^3\right)\right)$, which causes that each term on the left hand side of \eqref{4.1} will converge for $q \to \infty$ pointwise. In fact integrating by parts again and Cauchy-Schwartz's inequality furnish

\noindent \begin{tikzpicture}[baseline=(char.base)]
\node(char)[draw,fill=white,
  shape=rounded rectangle,
  drop shadow={opacity=.5,shadow xshift=0pt},
  minimum width=.8cm]
  {\Large I};
\end{tikzpicture} 
\noindent \begin{align*}
&\bigg|\int_{\mathbb{T}^3}\big[ v_q(t,x)-v_q(0,x)\big] \cdot\varphi(x) \,dx-\int_{\mathbb{T}^3}\big[v(t,x)-v(0,x)\big]\cdot\varphi(x) \,dx \bigg|\\
&\hspace{.5cm}\leq (2\pi)^{3/2} \|\varphi\|_C\Big[\|v_q(t)-v(t)\|_{L^2} +\|v_q(0)-v(0)\|_{L^2}\Big]\overset{q \to \infty}{\longrightarrow} 0,
\end{align*}

\noindent \begin{tikzpicture}[baseline=(char.base)]
\node(char)[draw,fill=white,
  shape=rounded rectangle,
  drop shadow={opacity=.5,shadow xshift=0pt},
  minimum width=.8cm]
  {\Large II};
\end{tikzpicture} 
\begin{align*}
&\bigg|\int_0^t\int_{\mathbb{T}^3}v_q(s,x)\cdot\varphi(x)\,dx\,ds-\int_0^t\int_{\mathbb{T}^3}v(s,x)\cdot\varphi(x)\,dx\,ds\bigg| \leq (2\pi)^{3/2} \|\varphi\|_C\|v_q-v\|_{C_tL^2}\overset{q \to \infty}{\longrightarrow} 0,
\end{align*}
\pagebreak 
\, 

\noindent \begin{tikzpicture}[baseline=(char.base)]
\node(char)[draw,fill=white,
  shape=rounded rectangle,
  drop shadow={opacity=.5,shadow xshift=0pt},
  minimum width=.8cm]
  {\Large III};
\end{tikzpicture} 
\begin{align*}
&\bigg|\int_0^t \int_{\mathbb{T}^3} v_q(s,x)\cdot \Delta\varphi(x)\,dx\,ds-\int_0^t \int_{\mathbb{T}^3}  v(s,x)\cdot\Delta\varphi(x)\,dx\,ds \bigg|\\
&\hspace{.5cm}\leq (2\pi)^{3/2} \|\varphi\|_{C^2}\|v_q-v\|_{C_tL^2}\overset{q \to \infty}{\longrightarrow} 0
\end{align*}
and by virtue of Cauchy-Schwartz's inequality and \eqref{3.44} also

\noindent \begin{tikzpicture}[baseline=(char.base)]
\node(char)[draw,fill=white,
  shape=rounded rectangle,
  drop shadow={opacity=.5,shadow xshift=0pt},
  minimum width=.8cm]
  {\Large IV};
\end{tikzpicture}

\begin{align*}
 &\bigg|\int_0^t \Theta(s)\int_{\mathbb{
T}^3}(v_q \otimes v_q)(s,x):\nabla \varphi^T(x)\,dx \,ds-\int_0^t \Theta(s)\int_{\mathbb{
T}^3}(v \otimes v)(s,x):\nabla \varphi^T(x)\,dx \,ds\bigg|\\
&\hspace{.5cm}\leq m^{1/4}  \int_0^t \int_{\mathbb{T}^3} \left(\sum_{i,j=1}^3 \big|(v_q \otimes v_q-v\otimes v)_{ij}(s,x)\big|^2\right)^{1/2} \left(\sum_{i,j=1}^3 \big|\partial_{x_j} \varphi_i(x)\big|^2\right)^{1/2} \,dx \,ds \\
&\hspace{.5cm}\leq \sqrt{3}m^{1/4}  \|\varphi\|_{C^1} \underbrace{\int_0^t\,ds}_{\leq \tau \leq 1}  \sup_{s \in [0,t]} \int_{\mathbb{T}^3} \|v_q \otimes  (v_q-v)-(v-v_q)\otimes v\|_F(s,x)\,dx  \\
&\hspace{.5cm}\leq \sqrt{3}m^{1/4}  \|\varphi\|_{C^1}  \sup_{s \in [0,t]} \int_{\mathbb{T}^3} |v_q (s,x)|\, |v_q(s,x)-v(s,x)|\,dx\\
&\hspace{1cm}+\sqrt{3}m^{1/4} \|\varphi\|_{C^1}\sup_{s \in [0,t]} \int_{\mathbb{T}^3}|v(s,x)-v_q(s,x)| \, | v(s,x)|\,dx  \\
& \hspace{.5cm}\leq \sqrt{3}m^{1/4}  \|\varphi\|_{C^1} \Big( \|v_q\|_{C_tL^2} \|v_q-v\|_{C_tL^2}+ \|v_q-v\|_{C_tL^2}\|v\|_{C_tL^2}\Big)\overset{q \to \infty}{\longrightarrow} 0.
\end{align*}
So passing to the limit on both sides of \eqref{4.1} shows that $v$ is an analytically weak solution to \eqref{1.2} with deterministic initial condition. Therefore $u:=e^Bv$ solves \eqref{1.1} on $[0,\tau]$ in the probabilistically strong and analytically weak sense; of course also with a deterministic initial condition $u_0$.
\item \underline{Regularity:}~\\
It remains to verify that the convergence of $(v_q)_{q\in \mathbb{N}_0}$ to $v$ even takes place in $ C\left((-\infty,\tau];H^\gamma\left(\mathbb{T}^3\right) \right)$. For this end we combine Hölder's inequality with exponents $\frac{1}{\gamma}$ and $\frac{1}{1-\gamma}$ with Plancherel's theorem to accomplish
\begin{align*}
\sum_{q\geq 0} \|v_{q+1}(t)-v_q(t)\|_{H^\gamma}&=\sum_{q \geq 0} \big\|\mathcal{F}^{-1}\big[(1+|\cdot|^2)^{\gamma/2} \mathcal{F}(v_{q+1}(t)-v_q(t))\big] \big\|_{L^2}\\
&\hspace{-0.5cm}\overset{\text{Plancherel}}{=}\sum_{q \geq 0} \big\|(1+|\cdot|^2)^{\gamma/2} \mathcal{F}(v_{q+1}(t)-v_q(t)) \big\|_{\ell^2(\mathbb{Z}^3)}\\
&=\sum_{q \geq 0} \big\|(1+|\cdot|^2)^{\gamma} \big[\mathcal{F}(v_{q+1}(t)-v_q(t))\big]^2 \big\|^{1/2}_{\ell^1(\mathbb{Z}^3)}\\
&=\sum_{q \geq 0} \big\|(1+|\cdot|^2)^{\gamma} \big[\mathcal{F}(v_{q+1}(t)-v_q(t))\big]^{2\gamma} \big[\mathcal{F}(v_{q+1}(t)-v_q(t))\big]^{2(1-\gamma)} \big\|^{1/2}_{\ell^1(\mathbb{Z}^3)}\\
&\hspace{-0.26cm}\overset{\text{Hölder}}{\leq}  \sum_{q \geq 0} \big\|(1+|\cdot|^2)\big[ \mathcal{F}(v_{q+1}(t)-v_q(t))\big]^2 \big\|_{\ell^1(\mathbb{Z}^3)}^{\gamma/2} \big\|\big[\mathcal{F}(v_{q+1}(t)-v_q(t))\big]^2\big\|_{\ell^1(\mathbb{Z}^3)}^{(1-\gamma)/2}\\
& \hspace{-0.5cm}\overset{\text{Plancherel}}{=} \sum_{q\geq 0} \|v_{q+1}(t)-v_q(t)\|_{H^1}^\gamma \|v_{q+1}(t)-v_q(t)\|^{1-\gamma}_{L^2}\\
&\lesssim \sum_{q\geq 0} \|v_{q+1}-v_q\|_{C_{t,x}^1}^\gamma \|v_{q+1}-v_q\|^{1-\gamma}_{C_tL^2}.
\end{align*}
for any $t \in (-\infty,\tau]$. Invoking \eqref{3.3b} and \eqref{3.42} then yields
\begin{align*}
\sum_{q\geq 0} \|v_{q+1}(t)-&v_q(t)\|_{H^\gamma}\lesssim 2^\gamma\bigg(M_0+(2\pi)^{3/2} \bigg)^{1-\gamma}m\bar{e}^{1/2}\lambda_1^{\beta(1-\gamma)}\sum_{q\geq 0} a^{\big[5\gamma-\beta(1-\gamma)\big]b^{q+1}}.
\end{align*}
So if we impose $\gamma \in \Big(0, \frac{\beta}{5+\beta} \Big)$, the above series will converge as it can be bounded by a geometric series. Thus the sequence $\left(\sum\limits_{q\geq 0}^n \|v_{q+1}-v_q\|_{C_tH^\gamma}\right)_{n\in \mathbb{N}_0}$ is as a convergent sequence particularly Cauchy in $\mathbb{R}$, which means that for all $\varepsilon>0$ there exists some $N\in \mathbb{N}_0$ so that
\begin{align*}
 \|v_n-v_k\|_{C_tH^\gamma}\leq \sum_{q=k}^{n-1} \|v_{q+1}-v_q\|_{C_tH^\gamma} \leq \varepsilon
\end{align*}
holds for every $n\geq k\geq N$. In other words $\left(v_q\right)_{q \in \mathbb{N}_0}$ is also a Cauchy sequence in the Banach space $C\left((-\infty,\tau];H^\gamma\left(\mathbb{T}^3\right)\right)$ for $\gamma \in \Big(0, \frac{\beta}{5+\beta} \Big)$, furnishing the existence of the limit in $C\left((-\infty,\tau];H^\gamma\left(\mathbb{T}^3\right)\right)$, which equals, by virtue of the uniqueness of the limit, to $v$.
\item \underline{Bounded:}\\ Furthermore it follows 
\begin{align*}
\|v(t,\omega)\|_{H^\gamma} 
&\leq \|v(t,\omega)-v_{q+1}(t,\omega)\|_{H^\gamma}+ \sum_{k=0}^q\|v_{k+1}(t,\omega)-v_k(t,\omega)\|_{H^\gamma}\\
&\lesssim \varepsilon+ 2^\gamma\bigg(M_0+(2\pi)^{3/2} \bigg)^{1-\gamma}m\bar{e}^{1/2}\lambda_1^{\beta(1-\gamma)} \left(1-a^{\big[5\gamma-\beta(1-\gamma)\big]b}\right)^{-1}
\end{align*}
for any $\varepsilon>0$, sufficiently large $q\geq 0$ and every $t \in [0,\tau]$ and $\omega \in \Omega$. Note that the constant on the right hand side neither depends on $t$ nor on $\omega$.
\item \underline{Kinetic energy:}\\
Moreover, remembering that $\left(v_q\right)_{q \in \mathbb{N}_0}$ is a Cauchy sequence in $C\left((-\infty,\tau];L^2\left(\mathbb{T}^3\right) \right)$, we may deduce 
\begin{align*}
\|u(t)\|_{L^2}^2=e(t)
\end{align*}
from \eqref{3.5}. 
\item \underline{Consistency:}\\
Let $e_1$ and $e_2$ be two energies in $C_b^1\left((-\infty,1];[\underline{e},\infty) \right)$ respecting
\begin{align*}
\|e_i\|_{C}\leq \bar{e}, \qquad \Big\|\frac{d}{dt} e_i\Big\|_{C}\leq \widetilde{e}
\end{align*}
and $e_i(t)=e_i(0)$ for every $t\leq 0$ and $i=1,2$, which coincide on $[0,t]$ for some $t \in [0,1]$ and let $\left(v^1_q\right)_{q \in \mathbb{N}_0}$, $\left(v^2_q\right)_{q \in \mathbb{N}_0}$ be the corresponding sequences, constructed in Proposition \ref{Proposition 4.1}. Then for each $q \in \mathbb{N}_0$ and $i=1,2$, $v^i_{q+1}$ consists of the previous, mollified iteration $v_\ell^i:=v_q^i\ast_t\varphi_\ell \ast_x \phi_\ell$ and the perturbation $w_{q+1}^i$. If we decompose the perturbations $w_{q+1}^1$ and $w_{q+1}^2$ in a way that is presented in Figure \ref{Figure 3.6}, one can see that they coincide on $(-\infty,t \wedge\tau]$, if $\mathring{R}_\ell^1$ and $\mathring{R}_\ell^2$ and also $\eta_\ell^1$ and $\eta_\ell^2$ do. The functions $\phi,\, \Phi$ and $\psi$ can be chosen for both sequences in the same way. \\
Taking into account that $e_1=e_2$ on $(-\infty,t]$ together with $v_q^1=v_q^2$ on $(-\infty,t \wedge \tau]$ implies $\eta_q^1=\eta_q^2$ on $(-\infty,t \wedge \tau]$, it indeed holds
\begin{align*}
\eta_\ell^1(s,x,\omega)=\int_{|y|\leq \ell}\int_{0}^{\ell} \eta_q^1(s-u,x-y,\omega)\varphi_\ell(u)\phi_\ell(y)\, du\, dy =\eta_\ell^2(s,x,\omega)
\end{align*}
for all $s\in (-\infty, t\wedge \tau]$, $x \in \mathbb{T}^3$ and $\omega \in \Omega$.\\
In the same manner we deduce $v_\ell^1(s)=v_\ell^2(s)$ and $\mathring{R}_\ell^1(s)=\mathring{R}_\ell^2(s)$, furnishing $v_{q+1}^1(s)=v_{q+1}^2(s)$  and thus also $\mathring{R}_{q+1}^1(s)=\mathring{R}_{q+1}^2(s)$ at any time $s\in(-\infty, t\wedge \tau]$. That means by induction we have ascertained that the sequences $\left(v^1_q\right)_{q \in \mathbb{N}_0}$ and $\left(v^2_q\right)_{q \in \mathbb{N}_0}$ are the same on $(-\infty, t\wedge \tau]$; hence so are their limits
\begin{align*}
v^1:=\lim_{q\to \infty }v_q^1=\lim_{q\to \infty }v_q^2=:v^2.
\end{align*}
As a consequence the solutions $u_1:=e^Bv^1$ and $u_2:=e^Bv^2$ to \eqref{1.1} on $[0,\tau]$, associated to the energies $e_1$ and $e_2$, respectively, coincide on $[0, t\wedge \tau]$, completing the proof of \thref{Theorem 1.2}.
\end{proof}

\appendix

\chapter{Appendix} \label{Appendix} \label{Appendix A.1} 
\begin{lemma}\thlabel{Lemma A.1}
For any $\varepsilon>0$ there exist a homomorphism $\varphi \colon B_\varepsilon\big([x]\big)\subseteq \mathbb{R}^3\setminus \left(2\pi \mathbb{Z}\right)^3 \to B_\varepsilon(x) \subseteq \mathbb{R}^3$, so that a $\mathbb{T}^3$-periodic function $u \colon \mathbb{R}^3\to \mathbb{R}^3$ solves \eqref{1.1}, if and only if $u \circ \varphi \colon \mathbb{R}^3\setminus \left(2\pi \mathbb{Z}\right)^3 \to \mathbb{R}^3$ does.
\end{lemma}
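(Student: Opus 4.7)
My plan is to interpret the map $\varphi$ as a local inverse of the canonical quotient projection $\pi\colon \mathbb{R}^3 \to \mathbb{R}^3\setminus (2\pi \mathbb{Z})^3$, and then simply track the way each term of \eqref{1.1} transforms under this identification. For $\varepsilon<\pi$ the restriction $\pi|_{B_\varepsilon(x)}$ is both continuous and injective (two distinct points in a Euclidean ball of radius strictly less than $\pi$ cannot differ by a nonzero element of $2\pi\mathbb{Z}^3$), hence a homeomorphism onto its image, so I can simply set $\varphi := (\pi|_{B_\varepsilon(x)})^{-1}$. For larger $\varepsilon$, I would cover $B_\varepsilon([x])$ by finitely many open sets on which the same pointwise construction is valid and glue the inverse maps together; this is purely a bookkeeping step because the entire statement is local.

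Next I would exploit the fact that, in each coordinate patch, $\varphi$ is the restriction of a translation of $\mathbb{R}^3$, so its Jacobian is the identity matrix. The chain rule then immediately yields
\begin{align*}
\partial_{x_i}(u\circ\varphi)=(\partial_{x_i}u)\circ\varphi,\qquad \Delta(u\circ\varphi)=(\Delta u)\circ\varphi,\qquad \divs(u\circ\varphi)=(\divs u)\circ\varphi,
\end{align*}
and similarly $(u\otimes u)\circ\varphi=(u\circ\varphi)\otimes(u\circ\varphi)$. The stochastic term $u\, dB$ carries no spatial derivatives, so it composes with $\varphi$ componentwise without any It\^o correction. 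Using the $\mathbb{T}^3$-periodicity of $u$ I would then check that $u\circ\varphi$ is well-defined on the torus ball $B_\varepsilon([x])$ independently of the choice of representative $x$ and of the coordinate patches used in the construction of $\varphi$.

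Plugging these identities into \eqref{1.1} gives the forward direction: each term of the system pulls back termwise from $B_\varepsilon(x)\subseteq\mathbb{R}^3$ to $B_\varepsilon([x])\subseteq\mathbb{R}^3\setminus(2\pi\mathbb{Z})^3$, so $u\circ\varphi$ satisfies the transported equation on the torus ball. For the converse I would use that $\pi\circ\varphi=\id$ on $B_\varepsilon([x])$, which means $u=(u\circ\varphi)\circ\pi$ on $B_\varepsilon(x)$; the same chain-rule calculation run in the opposite direction transports the PDE back to $B_\varepsilon(x)$, and periodicity of $u$ together with a finite covering of $\mathbb{R}^3$ by translates of such Euclidean balls upgrades the local identity into the equation on all of $\mathbb{R}^3$.

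The only real obstacle I foresee is cosmetic rather than mathematical: making the homeomorphism $\varphi$ exist as a single map for $\varepsilon$ larger than the injectivity radius of $\pi$ at $x$. I would resolve this either by interpreting $B_\varepsilon([x])$ intrinsically as the metric ball in the flat quotient metric (so that both sides of the statement are genuinely homeomorphic for small enough $\varepsilon$), or by reformulating the conclusion locally via a finite atlas, after which the equivalence follows patch by patch and the full statement is obtained by the locality of \eqref{1.1}.
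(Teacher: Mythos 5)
The paper states \thref{Lemma A.1} without any proof, treating the identification of $\mathbb{T}^3$-periodic functions with functions on the flat torus $\mathbb{R}^3\setminus(2\pi\mathbb{Z})^3$ as a standard fact, so there is no in-text argument to compare yours against. Your construction is the natural one: taking $\varphi$ as a local inverse of the quotient projection $\pi$, observing that in local coordinates its Jacobian is the identity so that $\partial_{x_i}$, $\Delta$, $\divs$ and the tensor product all commute with the pull-back, noting that the noise term $u\,dB$ has no spatial structure and hence no It\^o correction under this change of variables, and then transporting \eqref{1.1} term by term in both directions. This is correct and, as far as this folklore statement admits a ``proof,'' it is the one that would be given.

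You are also right to flag the ``for any $\varepsilon>0$'' phrasing as the only real issue. For $\varepsilon\geq\pi$ the metric ball $B_\varepsilon([x])$ in the flat quotient is no longer homeomorphic to a Euclidean ball (and for large $\varepsilon$ it equals the whole torus), so no single homeomorphism $\varphi$ onto $B_\varepsilon(x)\subseteq\mathbb{R}^3$ can exist; the gluing you propose cannot repair a genuine topological mismatch, it can only produce a finite atlas of local inverses. The honest reading of the lemma is therefore the local one you propose in your last paragraph, and that is also all the paper actually uses: the remark immediately before \thref{Definition 1.1} invokes this lemma only to justify identifying $\mathbb{T}^3$-periodic extensions with functions on the flat torus, a purely local statement. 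Your proposal, with the lemma read locally, is sound; the global version ``for any $\varepsilon$'' is simply an imprecision in the statement, not a gap in your argument.
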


\begin{lemma}\thlabel{Lemma A.2} 
For $d\geq 3$ and a finite set of directions $\Lambda \subseteq \mathbb{R}^d $, there exists some $(\alpha_\xi)_{\xi \in \Lambda}$ and $\rho>0$ such that the periodic tubes 
\begin{align*}
\left(B_\rho (0)+\alpha_\xi+\{s \xi\}_{s\in \mathbb{R}}+2\pi\mathbb{Z}^d\right)_{\xi \in \Lambda}
\end{align*}
are mutually disjoint.
\end{lemma}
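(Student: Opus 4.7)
The plan is to reformulate the problem on the torus $\mathbb{T}^d=\mathbb{R}^d/(2\pi\mathbb{Z})^d$ and then use a measure-theoretic avoidance argument on the shift parameters. Write $\pi\colon\mathbb{R}^d\to\mathbb{T}^d$ for the quotient map. Observe that the set
\[
T_\xi(\alpha_\xi,\rho):=B_\rho(0)+\alpha_\xi+\{s\xi\}_{s\in\mathbb{R}}+2\pi\mathbb{Z}^d
\]
is precisely $\pi^{-1}\bigl(U_\rho(C_\xi(\alpha_\xi))\bigr)$, where $C_\xi(\alpha_\xi):=\pi\bigl(\alpha_\xi+\mathbb{R}\xi\bigr)$ and $U_\rho(\cdot)$ denotes the $\rho$-neighborhood on $\mathbb{T}^d$. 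Consequently the tubes in $\mathbb{R}^d$ are pairwise disjoint iff the neighborhoods $U_\rho(C_\xi(\alpha_\xi))\subseteq\mathbb{T}^d$ are pairwise disjoint. Note that $C_\xi(\alpha_\xi)$ is a \emph{closed} curve in $\mathbb{T}^d$ precisely when the direction $\xi$ is rational (in the sense that $T\xi\in 2\pi\mathbb{Z}^d$ for some $T>0$); this is the situation of interest in the application (where $\Lambda\subseteq\mathbb{S}^2\cap\mathbb{Q}^3$), and I will work under that assumption, which is implicit in the statement.

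Next, I would show that for any two distinct $\xi,\xi'\in\Lambda$, the set of ``bad'' shifts
\[
\mathcal{B}_{\xi,\xi'}:=\bigl\{(\alpha_\xi,\alpha_{\xi'})\in\mathbb{T}^d\times\mathbb{T}^d\,:\,C_\xi(\alpha_\xi)\cap C_{\xi'}(\alpha_{\xi'})\neq\emptyset\bigr\}
\]
is a closed subset of $\mathbb{T}^d\times\mathbb{T}^d$ of measure zero. Indeed, the intersection condition reads $\alpha_\xi-\alpha_{\xi'}\in\pi\bigl(\mathbb{R}\xi-\mathbb{R}\xi'\bigr)\subseteq\mathbb{T}^d$, and for rational $\xi,\xi'$ the right-hand side is the image of a homomorphism $\mathbb{R}^2\to\mathbb{T}^d$ whose kernel is a rank-$2$ lattice (it contains $T_\xi\mathbb{Z}\oplus T_{\xi'}\mathbb{Z}$), hence a closed subtorus of dimension at most $2$. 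Since $d\geq 3$, this subtorus has codimension $\geq 1$ and therefore Lebesgue measure zero. Enumerating $\Lambda=\{\xi_1,\dots,\xi_K\}$, I would choose $\alpha_{\xi_1}$ arbitrarily and then pick $\alpha_{\xi_{k+1}}$ inductively in the complement of the finite union $\bigcup_{i\leq k}\bigl(\alpha_{\xi_i}+\pi(\mathbb{R}\xi_i-\mathbb{R}\xi_{k+1})\bigr)$, which is again a closed null set, hence has dense complement.

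With such shifts fixed, the closed curves $\bigl(C_{\xi}(\alpha_\xi)\bigr)_{\xi\in\Lambda}$ are compact and pairwise disjoint in $\mathbb{T}^d$, so
\[
\varepsilon:=\min_{\xi\neq\xi'}\operatorname{dist}_{\mathbb{T}^d}\!\bigl(C_\xi(\alpha_\xi),C_{\xi'}(\alpha_{\xi'})\bigr)>0.
\]
Setting $\rho:=\varepsilon/3$ makes the $\rho$-neighborhoods $U_\rho(C_\xi(\alpha_\xi))$ pairwise disjoint on the torus, which via the first paragraph yields the disjointness of the periodic tubes in $\mathbb{R}^d$.

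The main subtlety I expect lies in the measure-zero claim of the second paragraph: one must verify that the image $\pi(\mathbb{R}\xi+\mathbb{R}\xi')$ really is a closed subtorus of dimension $\leq 2$, which uses both the rationality of the directions (to guarantee the kernel contains a full-rank lattice) and the bound $d\geq 3$ (to guarantee positive codimension). The case of parallel $\xi,\xi'$ collapses the image to a $1$-torus, only strengthening the codimension estimate, and the edge case $\xi=0$ (a single tube reduces to a union of balls around $\alpha_\xi+2\pi\mathbb{Z}^d$) is handled trivially by the same shift-avoidance principle.
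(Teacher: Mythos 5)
The paper does not prove \thref{Lemma A.2} itself --- it only cites [BMS21], Lemma~3 --- so there is no in-paper proof to compare against. Your argument is a valid, self-contained proof along natural lines: pass to $\mathbb{T}^d$, observe that the Euclidean tube is the full preimage of a $\rho$-neighborhood of a curve, reduce to choosing shifts so the (closed) curves are pairwise disjoint, avoid a finite union of null sets inductively, and finish by compactness. The measure-zero step is sound: for rational, linearly independent $\xi,\xi'$ the closure of $\pi(\mathbb{R}\xi-\mathbb{R}\xi')$ is a subtorus of dimension $\le 2$, which has positive codimension since $d\ge 3$; for parallel directions it collapses to a $1$-torus, as you note.

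One point worth emphasizing: your restriction to rational directions is not merely a convenience but is genuinely necessary, and the statement as transcribed in this paper (arbitrary finite $\Lambda\subseteq\mathbb{R}^d$) is too strong. For instance with $d=3$, $\xi=(1,\sqrt{2},0)/\sqrt{3}$ and $\xi'=(0,0,1)$, the curve $\pi(\alpha_\xi+\mathbb{R}\xi)$ is dense in the $2$-torus slice $\{z=(\alpha_\xi)_3\}$, and $\pi(\alpha_{\xi'}+\mathbb{R}\xi')$ meets that slice for \emph{every} choice of $\alpha_{\xi'}$, so any $\rho$-tube around the first curve meets the second curve for any $\rho>0$ and any shifts. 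So the lemma fails in general, and your rationality hypothesis (which holds in the application, $\Lambda\subseteq\mathbb{S}^2\cap\mathbb{Q}^3$, and which [BMS21] presumably also impose) is exactly the right fix; you were correct to flag it as implicit in the statement rather than proved from it.
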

\begin{proof}
See \cite{BMS21}, p.9, Lemma 3.
\end{proof}

\begin{lemma}  \thlabel{Lemma A.3}
Take $p \in \{1,2\}$ and let $g$ be a $\left(\frac{\mathbb{T}}{\kappa}\right)^3$-periodic function with $\kappa \in \mathbb{N}$ and $f$ be a $\mathbb{T}^3$-periodic function, satisfying
\begin{align*}
\sum_{|\alpha|\leq n}\|D^\alpha f\|_{L^p} \leq C_f \zeta^{n}
\end{align*}
for some constants $C_f>0,\, \zeta \geq 1 $ and every $0\leq n\leq N+4$ with $N\in\mathbb{N}$. Then it holds
\begin{align*}
\|fg\|_{L^p}\lesssim C_f \|g\|_{L^p},
\end{align*}
provided
\begin{align*}
3\, \frac{2\pi}{\kappa}\zeta \leq \frac{1}{41} \quad \text{and} \quad 16\exp(12)\zeta^4 \left(3\, \frac{2\pi }{\kappa}2\exp(3)\zeta \right)^N \leq 1.
\end{align*}
\end{lemma}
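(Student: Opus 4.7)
The strategy exploits the sharp Fourier scale separation between the two factors. Since $g$ is $(\mathbb{T}/\kappa)^3$-periodic, its Fourier series lives on the sublattice $\kappa\mathbb{Z}^3$, while integration by parts in $\hat f_n=(2\pi)^{-3}\int f\,e^{-in\cdot x}\,dx$, combined with the derivative hypothesis, forces
$$|\hat f_n|\;\le\;C\,C_f\min\!\Bigl(1,\,(\zeta/|n|)^{k}\Bigr)\qquad(0\le k\le N+4),$$
with a universal $C$. Under the first smallness hypothesis $3\cdot 2\pi\zeta/\kappa\le 1/41$, this forces $\hat f$ to be sharply concentrated in a ball well inside the fundamental cell of $\kappa\mathbb{Z}^3$ about the origin, which is the geometric fact driving the argument.

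For the case $p=2$, I would apply Parseval to write
$$\|fg\|_{L^2}^2=(2\pi)^3\sum_{m\in\mathbb Z^3}\Bigl|\sum_{k\in\kappa\mathbb Z^3}\hat f_{m-k}\hat g_k\Bigr|^2.$$
For each $m\in\mathbb Z^3$, let $k^{*}(m)\in\kappa\mathbb Z^3$ denote the nearest lattice point (so $|m-k^{*}(m)|\le\sqrt 3\,\kappa/2$, while every other $k$ obeys $|m-k|\ge\kappa/2$). Splitting the inner sum into the diagonal $k=k^{*}(m)$ term plus the off-diagonal remainder: the diagonal part, after squaring, summing over $m$, and reindexing by $n:=m-k^{*}(m)$ which ranges over a single fundamental cell, telescopes into $\|\hat f\|_{\ell^2}^2\|\hat g\|_{\ell^2}^2\le\|f\|_{L^2}^2\|g\|_{L^2}^2/(2\pi)^6\le C_f^2\|g\|_{L^2}^2/(2\pi)^6$. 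The off-diagonal part, estimated by Cauchy--Schwarz together with the Fourier decay from the first step applied at scale $|m-k|\ge\kappa/2$, yields an error of order $\zeta^{4}(\mathrm{const}\cdot\zeta/\kappa)^{N}\,C_f^2\|g\|_{L^2}^2$, which is exactly the factor appearing in the second smallness hypothesis.

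For the case $p=1$, I would partition $\mathbb T^3$ into the $\kappa^3$ periodicity cubes $Q_j$ of $g$ (each of side $2\pi/\kappa$). Writing
$$\int_{Q_j}|fg|\,dx\le|\bar f_j|\int_{Q_j}|g|\,dx+\int_{Q_j}|f-\bar f_j|\,|g|\,dx,$$
where $\bar f_j:=\kappa^3(2\pi)^{-3}\int_{Q_j}f$, the first sum reassembles to
$$\kappa^{-3}\|g\|_{L^1}\sum_j|\bar f_j|\le(2\pi)^{-3}\|f\|_{L^1}\|g\|_{L^1}\le(2\pi)^{-3}C_f\|g\|_{L^1}.$$
The oscillation term is then handled by expanding $f$ in Fourier and grouping modes by cosets of $\kappa\mathbb Z^3$: writing $f=\sum_{r}e^{ir\cdot x}f_{r}$ with $f_{r}:=\sum_{m\in\kappa\mathbb Z^3}\hat f_{r+m}e^{im\cdot x}$ (itself $(\mathbb T/\kappa)^3$-periodic), each $f_{r}g$ inherits this periodicity, so $\|f_{r}g\|_{L^1}\le\|f_{r}\|_{L^\infty}\|g\|_{L^1}$, and each $\|f_{r}\|_{L^\infty}$ is dominated by the single coefficient $\hat f_{\tilde r}$ at the closest representative $\tilde r\in(-\kappa/2,\kappa/2]^3$ plus a geometric tail of order $C_f(\zeta/\kappa)^{N+4}$ from the first step. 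After subtracting off the $\bar f_j$ contribution already accounted for, the remaining sum over $r\ne 0$ is precisely the $(\zeta/\kappa)^{N+4}$-error that gets absorbed by the second hypothesis.

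The principal obstacle is tracking the explicit constants $16$, $\exp(12)$, $2\exp(3)$ through the computation: these arise from (i) the sharp integration-by-parts bound on $|\hat f_n|$, where the $(N+4)!$ generated by the repeated differentiation of the exponential contributes the $\exp$-factors via Stirling, (ii) the geometric-series enumeration over $\kappa\mathbb Z^3\setminus\{0\}$ of nearest-neighbour contributions, whose convergence is secured by the first smallness hypothesis $3\cdot 2\pi\zeta/\kappa\le 1/41$, and (iii) the $(2\pi)^3$ factors in the Fourier normalization on $\mathbb T^3$. The bookkeeping is routine but must be carried out carefully so that the final error factor matches $16\exp(12)\zeta^4(6\pi\cdot 2\exp(3)\zeta/\kappa)^N\le 1$ exactly.
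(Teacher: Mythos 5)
The paper's own ``proof'' of \thref{Lemma A.3} is simply a citation to \cite{BV19b} (Lemma 3.7), so there is no in-text argument to compare against; your proposal must be judged on its own.

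Your $p=2$ argument is a reasonable outline: Parseval gives
$\|fg\|_{L^2}^2=(2\pi)^3\sum_m|\sum_{k\in\kappa\mathbb Z^3}\hat f_{m-k}\hat g_k|^2$,
and splitting each inner sum at the unique $k^\ast(m)\in\kappa\mathbb Z^3$ with $|m-k^\ast(m)|\leq\kappa/2$ gives a diagonal piece that reindexes into $\|\hat f\|_{\ell^2}^2\|\hat g\|_{\ell^2}^2$ and an off-diagonal piece in which every surviving Fourier coefficient of $f$ has $|m-k|\ge\kappa/2$; the decay $|\hat f_n|\lesssim C_f(\zeta/|n|)^{N+4}$ then makes that remainder subdominant under the second hypothesis (after using $|a+b|^2\lesssim|a|^2+|b|^2$ to separate the cross terms, which you omit to mention but which only costs a universal factor). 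This is close in spirit to, though not identical with, the low/high frequency split $f=\mathbb P_{\leq\kappa/2}f+\mathbb P_{\geq\kappa/2}f$ used in \cite{BV19b}.

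The $p=1$ argument, however, has a genuine gap. You split $f$ on each cell $Q_j$ into its average $\bar f_j$ plus an oscillation, handle $\sum_j|\bar f_j|\int_{Q_j}|g|\le(2\pi)^{-3}C_f\|g\|_{L^1}$ correctly, and then claim that the oscillation term is controlled by the Fourier-coset decomposition $f=\sum_r e^{ir\cdot x}f_r$, with ``the remaining sum over $r\neq0$'' being ``precisely the $(\zeta/\kappa)^{N+4}$-error''. That is not so. One has $\|f_r\|_{L^\infty}\approx|\hat f_{\tilde r}|$ for the small representative $\tilde r$ of $r$, and the sum $\sum_{r\neq 0}|\hat f_{\tilde r}|$ runs over all nonzero lattice points in a fundamental cell; with the decay $|\hat f_n|\lesssim C_f\min(1,(\zeta/|n|)^4)$ this is of order $C_f\zeta^3$, not $C_f(\zeta/\kappa)^{N+4}$. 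These low-mode contributions ($1\le|\tilde r|\lesssim\zeta$) are of size $\sim C_f$ each and are not annihilated by the $\bar f_j$ step — the cell averages capture them only because such modes vary slowly across cells, a cancellation your coset bookkeeping does not exhibit. A simple test case is $f=C_f(1+\epsilon e^{i e_1\cdot x})$ with $g$ a sharp $(\mathbb T/\kappa)^3$-periodic bump: the $r=e_1$ coset contributes $\epsilon C_f\|g\|_{L^1}$ to $\sum_r\|f_r g\|_{L^1}$, nowhere near $(\zeta/\kappa)^{N+4}$. The correct way to close the oscillation term is pointwise, not via cosets: $\|f-\bar f_j\|_{L^\infty(Q_j)}\lesssim\kappa^{-1}\|\nabla f\|_{L^\infty(Q_j)}$ together with the rescaled Sobolev embedding $\|\nabla f\|_{L^\infty(Q_j)}\lesssim(\kappa/2\pi)^3\sum_{|\alpha|\leq 4}(2\pi/\kappa)^{|\alpha|}\|D^\alpha\nabla f\|_{L^1(Q_j)}$; summing over $j$ and using the first smallness hypothesis bounds the oscillation term by $O(\zeta/\kappa)\,C_f\|g\|_{L^1}$, which is small (note: the error is $O(\zeta/\kappa)$, not $O((\zeta/\kappa)^{N+4})$ as you assert). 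As written, your $p=1$ argument does not close.
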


\begin{proof}
See \cite{BV19b}, p.12, Lemma 3.7.
\end{proof}

\begin{lemma} \thlabel{Lemma A.4}
Assume $1\leq \zeta <\kappa$ to be parameters, satisfying $\kappa^{3-N}\zeta^N\leq 1$ for some non-negative integer $N \geq 2$, let $f\in L^p\left(\mathbb{T}^3 \right)$ with $p \in (1,2]$ and let $a \in C^N\left(\mathbb{T}^3\right)$ be a function which obeys
\begin{align*}
\sum_{|\alpha|= n}\|D^\alpha a\|_{L^\infty} \leq C_a\zeta^{n}
\end{align*}
for some constant $C_a>0$ and $n \in \{0,N\}$. It then holds that
\begin{align*}
\| \, \mathcal{R}\mathbb{P}_{\neq 0}(a \mathbb{P}_{\geq\kappa}f)\|_{L^p}\lesssim C_a\frac{\|f\|_{L^p}}{\kappa}, 
\end{align*}
where the implicit constant depends on $p$ and $N$.
\end{lemma}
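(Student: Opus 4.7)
\textit{Proof proposal.} The plan is to iterate an integration-by-parts identity that trades smoothness of $a$ for inverse-Laplacian gains on $\mathbb{P}_{\geq\kappa}f$, until the hypothesis $\kappa^{3-N}\zeta^N\leq 1$ can absorb the remainder. First I would introduce the vector potential $G_1:=\nabla(-\Delta)^{-1}\mathbb{P}_{\geq\kappa}f$, which by iterating \thref{Lemma 2.4} satisfies $\|G_1\|_{L^p}\lesssim\kappa^{-1}\|f\|_{L^p}$ and $\divs G_1=\mathbb{P}_{\geq\kappa}f$. The basic identity
\begin{align*}
a\mathbb{P}_{\geq\kappa}f = \divs(aG_1) - \nabla a\cdot G_1
\end{align*}
then splits the quantity of interest into a pure divergence plus a remainder carrying one derivative on $a$. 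Because $\mathcal{R}\mathbb{P}_{\neq 0}\divs$ decomposes into double Riesz-type transforms via the explicit formula for $\mathcal{R}$ in \thref{Lemma 3.8}, it is $L^p$-bounded, which gives
\begin{align*}
\|\mathcal{R}\mathbb{P}_{\neq 0}\divs(aG_1)\|_{L^p}\lesssim C_a\|G_1\|_{L^p}\lesssim \tfrac{C_a}{\kappa}\|f\|_{L^p},
\end{align*}
already of the target order for this contribution.

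Next I would iterate on $\nabla a\cdot G_1$. Setting $\phi_k:=(-\Delta)^{-k}\mathbb{P}_{\geq\kappa}f$ and $G_k:=\nabla\phi_k$, repeated application of \thref{Lemma 2.4} yields $\|\phi_k\|_{L^p}\lesssim\kappa^{-2k}\|f\|_{L^p}$ and $\|G_k\|_{L^p}\lesssim\kappa^{-(2k-1)}\|f\|_{L^p}$, with $\divs G_k=-\phi_{k-1}$. Using $\nabla a\cdot G_1=\nabla a\cdot\nabla\phi_1=\divs((\nabla a)\phi_1)-(\Delta a)\phi_1$, then writing $\phi_1=-\divs G_2$ and integrating by parts again, and so on, one obtains after $N$ steps a schematic identity
\begin{align*}
a\mathbb{P}_{\geq\kappa}f = \sum_{j=1}^{N}(-1)^{j-1}\divs\bigl(D^{j-1}a\cdot\eta_j\bigr) + (-1)^{N}D^{N}a\cdot\eta_{N+1},
\end{align*}
where $\eta_j$ involves appropriate Riesz-type multipliers of $\phi_{\lfloor j/2\rfloor}$ or $G_{\lfloor j/2\rfloor}$ and satisfies $\|\eta_j\|_{L^p}\lesssim\kappa^{-j}\|f\|_{L^p}$.

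Interpolation between $\|a\|_{L^\infty}\leq C_a$ and $\|D^N a\|_{L^\infty}\lesssim C_a\zeta^N$ (Gagliardo--Nirenberg on $\mathbb{T}^3$) produces $\|D^j a\|_{L^\infty}\lesssim C_a\zeta^j$ for $0\leq j\leq N$, so each divergence term contributes
\begin{align*}
\|\mathcal{R}\mathbb{P}_{\neq 0}\divs(D^{j-1}a\cdot\eta_j)\|_{L^p}\lesssim C_a\zeta^{j-1}\kappa^{-j}\|f\|_{L^p}=\tfrac{C_a}{\kappa}\bigl(\tfrac{\zeta}{\kappa}\bigr)^{j-1}\|f\|_{L^p},
\end{align*}
which sums to $\lesssim C_a/\kappa\cdot\|f\|_{L^p}$ by the geometric series thanks to $\zeta<\kappa$. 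The terminal non-divergence piece satisfies
\begin{align*}
\|\mathcal{R}\mathbb{P}_{\neq 0}(D^{N}a\cdot\eta_{N+1})\|_{L^p}\lesssim C_a\zeta^{N}\kappa^{-(N+1)}\|f\|_{L^p}\lesssim \tfrac{C_a}{\kappa^{4}}\|f\|_{L^p}\leq\tfrac{C_a}{\kappa}\|f\|_{L^p},
\end{align*}
where the middle inequality uses precisely the hypothesis $\zeta^N\kappa^{3-N}\leq 1$ and the last uses $\kappa\geq 1$. This completes the argument.

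The main obstacle will be the bookkeeping of the recursion: identifying the explicit form of the intermediate tensors $\eta_j$ (these are compositions of Riesz transforms and inverse Laplacians applied to $\mathbb{P}_{\geq\kappa}f$), ensuring that $\mathbb{P}_{\neq 0}$ can be commuted past them so that $\mathcal{R}$ always acts on genuinely zero-mean functions, and verifying the order-$0$ Calderón--Zygmund bound $\|\mathcal{R}\mathbb{P}_{\neq 0}\divs\|_{L^p\to L^p}\lesssim 1$ for general matrix-valued arguments. A secondary technicality is the interpolation estimate for intermediate derivatives of $a$, which on $\mathbb{T}^3$ can be handled directly via Fourier multipliers given the two endpoint bounds on $a$.
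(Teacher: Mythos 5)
The paper does not carry its own proof of this lemma; it is quoted from Buckmaster--Vicol \cite{BV19b}, Lemma~B.1, so there is no internal argument to compare yours against. Your iterated integration-by-parts scheme --- trading one derivative on $a$ for one inverse-gradient on $\mathbb{P}_{\geq\kappa}f$ at each step, and absorbing $\mathcal{R}\mathbb{P}_{\neq 0}\divs$ as an order-zero Fourier multiplier bounded on $L^p$ --- is a valid route and is in the same spirit as the commutator estimates used in that source. The structure is correct: the $N$ divergence pieces each contribute $\lesssim C_a\zeta^{j-1}\kappa^{-j}\|f\|_{L^p}$ and sum (over at most $N$ terms, since $\zeta<\kappa$) to $\lesssim_N C_a\kappa^{-1}\|f\|_{L^p}$, and the terminal piece is controlled by the hypothesis on $\zeta^N\kappa^{3-N}$.

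Two points need tightening. First, there is an off-by-one in the decay of the terminal remainder: after step $j$ the pieces are a divergence $\divs(D^{j-1}a\cdot\eta_j)$ with $\|\eta_j\|_{L^p}\lesssim\kappa^{-j}\|f\|_{L^p}$ plus a remainder that pairs $D^{j}a$ with a quantity of that \emph{same} decay $\kappa^{-j}$, so after $N$ steps you land on $D^N a$ against something of $L^p$-size $\kappa^{-N}\|f\|_{L^p}$, not $\kappa^{-(N+1)}\|f\|_{L^p}$ as written. Since $\mathcal{R}\mathbb{P}_{\neq 0}$ is only $L^p$-bounded on this non-divergence term (no extra $\kappa^{-1}$ gain there, as $D^N a$ may carry low frequencies), the terminal contribution is $\lesssim C_a\zeta^N\kappa^{-N}\|f\|_{L^p}$. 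The conclusion is nevertheless unchanged, because $\zeta^N\kappa^{3-N}\leq 1$ gives $\zeta^N\kappa^{-N}\leq\kappa^{-3}\leq\kappa^{-1}$, which is exactly the slack the assumption is designed to provide. Second, you should state explicitly that the intermediate bounds $\|D^j a\|_{L^\infty}\lesssim C_a\zeta^j$ for $0<j<N$ come from the periodic Landau--Kolmogorov (Gagliardo--Nirenberg) inequality --- the hypothesis only provides $n\in\{0,N\}$ --- and that the zeroth-order term appearing in that inequality on a compact domain is absorbed because $\zeta\geq 1$.
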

\begin{proof}
See \cite{BV19b}, p.32, Lemma B.1.
\end{proof}

\addcontentsline{toc}{chapter}{References}

\thispagestyle{empty}


\begin{thebibliography}{999999999999}







\bibitem[AT74]{AT74}
A.A. Aleksandrov, M.S. Takhtengerts, Viscosity of water at temperatures of $-20$ to $150^\circ$C, vol. 27 in Journal of Engineering Physics and Thermophysics (pp. 1235–1239), \url{https://doi.org/10.1007/BF00864022}, 1974.

\bibitem[Be23]{Be23}
S. Berkemeier, A Toolbox for the Methods of Convex Integration, (not published).

\bibitem[BDLIS15]{BDLIS15}
T. Buckmaster, C. De Lellis, P. Isett, and L. Sz\'ekelyhidi \newblock Jr., Anomalous dissipation for 1/5-Hölder Euler flows, vol. 182, Issue 1 in Annals of Mathematics (pp. 127–172), \url{https://doi.org/10.4007/annals.2015.182.1.3}, 2015.




\bibitem[BDLSV17]{BDLSV17}
T. Buckmaster, C. De Lellis, L. Székelyhidi Jr., V. Vicol, Onsager's conjecture for admissible weak solutions, arXiv:1701.08678, 2017. 




\bibitem[BV19a]{BV19a}
T. Buckmaster, V. Vicol, Convex integration and phenomenologies in turbulence, arXiv:1901.09023, 2019.


\bibitem[BV19b]{BV19b}
T. Buckmaster, V. Vicol, Nonuniqueness of weak solutions to the Navier-Stokes equation, vol. 189, no. 1 in Annals of Mathematics (pp. 101-144), \url{https://doi.org/10.4007/annals.2019.189.1.3}, 2019.

\bibitem[BMS21]{BMS21}
J. Burczak, S. Modena, L. Sz\'ekelyhidi Jr., Non Uniqueness of power-law flows, vol. 388 in Communications in Mathematical Physics (pp.199-243), \url{https://doi.org/10.1007/s00220-021-04231-7}, 2021.



\bibitem[CDZ10]{CDZ10}
W. Chen, Z. Dong, X. Zhu, Sharp non-uniqueness of solutions to stochastic Navier-Stokes equations, arXiv:2208.08321, 2022.


\bibitem[DLS09]{DLS09}
C. De Lellis, L. Sz\'ekelyhidi Jr., The Euler equations as a differential inclusion, vol. 170, no. 3 in Annals of Mathematics (pp. 1417–36), \url{https://www.jstor.org/stable/25662181}, 2009. 


\bibitem[DLS10]{DLS10}
C. De Lellis, L. Sz\'ekelyhidi Jr., On admissibility criteria for weak solutions of the Euler equations, vol. 195 in Archive for Rational Mechanics and Analysis (pp. (225–260), \url{https://doi.org/10.1007/s00205-008-0201-x}, 2010.



\bibitem[DLS13]{DLS13}
C. De Lellis, L. Sz\'ekelyhidi Jr., Dissipative continuous Euler flows, vol. 193 in Inventiones mathematicae (pp. 377–407), \url{ https://doi.org/10.1007/s00222-012-0429-9}, 2013.


\bibitem[DNPV11]{DNPV11}
E. Di Nezza, G. Palatucci, E. Valdinoci, Hitchhiker's guide to the fractional Sobolev spaces, arXiv:1104.4345, 2011.






\bibitem[Ev10]{Ev10}
L.C. Evans, Partial differential equations, vol. 19 in Graduate Studies in Mathematics, American Mathematical Society, Providence, 2nd ed., 2010.


\bibitem[HLP22]{HLP22}
M. Hofmanová, T. Lange, U. Pappalettera, Global Existence and Non-Uniqueness of $3D$ Euler Equations Perturbed by Transport Noise, (to appear). 



\bibitem[HZZ19]{HZZ19}
M. Hofmanová, R. Zhu, X. Zhu, Non-uniqueness in law of stochastic 3D Navier-Stokes equations, arXiv:1912.11841, 2019.





\bibitem[HZZ21a]{HZZ21a}
M. Hofmanová, R. Zhu, X. Zhu, Global-in-time probabilistically strong and Markov solutions to stochastic 3D Navier-Stokes equations: Existence and non-uniqueness. arXiv:2104.09889, 2021 (to appear in Annals of Probability).


\bibitem[HZZ21b]{HZZ21b}
M. Hofmanová, R. Zhu, X. Zhu, Global existence and non-uniqueness for 3D Navier-Stokes equations with space-time white noise. arXiv:2112.14093, 2021.

\bibitem[HZZ21c]{HZZ21c}
M. Hofmanová, R. Zhu, X. Zhu, On ill- and well-posedness of dissipative martingale solutions to stochastic 3D Euler Equations, vol. 75, Issue 11 in Pure and Applied Mathematics (pp. 2446-2510), \url{https://doi.org/10.1002/cpa.22023}, 2021.



\bibitem[HZZ22a]{HZZ22a}
M. Hofmanová, R. Zhu, X. Zhu, Non-unique ergodicity for deterministic and stochastic 3D Navier-Stokes and Euler equations, arXiv:2208.08290v1, 2022.

\bibitem[HZZ22b]{HZZ22b}
M. Hofmanová, R. Zhu, X. Zhu, A class of supercritical/critical singular stochstic PDEs: Existence, non-uniqueness, non-gaussianity, non-unique ergodicity, arXiv:2205.13378v1, 2022. 


\bibitem[Is16]{Is16}
P. Isett, A Proof of Onsager's Conjecture, arXiv:1608.08301, 2016.



\bibitem[LZ22]{LZ22}
H. Lü, X. Zhu, Global-in-times probabilistically strong solutions to stochastic power-law equations: Existence and non-uniqueness, arXiv:2209.02531, 2022.

\bibitem[RS22]{RS22}
M. Rehmeier, A. Schenke, Non-Uniqueness in Law for Stochastic Hypodissipative Navier-Stokes Equations, arXiv:2104.10798v2, 2022.

\bibitem[RZZ13]{RZZ13}
M. Röckner, R. Zhu, X. Zhu, Local existence and non-explosion of solutions for stochastic fractional partial differential equations driven by multiplicative noise, arXiv:1307.4392, 2013.

\bibitem[Tr83]{Tr83}
H. Triebel, Theory of function spaces, vol. 78 in Monographs in Mathematics, Birkhäuser Verlag, Basel, Boston, Stuttgart, 1983.

\bibitem[Tr92]{Tr92}
H. Triebel, Theory of function spaces II, vol. 84 in Monographs in Mathematics, Birkhäuser Verlag, Basel, Boston, Berlin, 1992.

\bibitem[Ya21a]{Ya21a}
K. Yamazaki, Non-Uniqueness in Law of Three-Dimensional Navier-Stokes Equations Diffused via a Fractional Laplacian with Power Less than One Half, arXiv:2104.10294v1, 2021.

\bibitem[Ya21b]{Ya21b}
K. Yamazaki, Non-Uniqueness in Law of Three-Dimensional Magnetohydrodynamics System Forced by Random Noise, arXiv:2109.07015v1, 2021.


\bibitem[Ya22]{Ya22}
K. Yamazaki, Non-Uniqueness in Law of the Two-Dimensional Surface Quasi-Geostrophic Equations Forced by Random Noise, arXiv:2208.05673v2, 2022.

\end{thebibliography}
\end{document}